\documentclass[10pt]{amsart}
\usepackage{tikz-cd}
\usepackage{amssymb}
\usepackage{mathrsfs}
\usepackage[shortlabels]{enumitem}
\usepackage[colorlinks,linkcolor=black,citecolor=black,urlcolor=black]{hyperref}
\usepackage[color = blue!20, bordercolor = black, textsize = tiny]{todonotes}
\usepackage[capitalise]{cleveref}
\usepackage{comment}

\numberwithin{equation}{section}
\newtheorem{thm}[subsection]{Theorem}

\newtheorem{lem}[subsection]{Lemma}
\newtheorem{prop}[subsection]{Proposition}
\newtheorem{thmalpha}{Theorem}

\theoremstyle{definition}
\newtheorem{df}[subsection]{Definition}
\newtheorem{rmk}[subsection]{Remark}
\newtheorem{exm}[subsection]{Example}
\newtheorem{const}[subsection]{Construction}

\newcommand{\bE}{\mathbf{E}}

\newcommand{\A}{\mathbb{A}}

\newcommand{\N}{\mathbb{N}}
\renewcommand{\P}{\mathbb{P}}

\newcommand{\Z}{\mathbb{Z}}

\newcommand{\cC}{\mathcal{C}}

\newcommand{\cE}{\mathcal{E}}
\newcommand{\cF}{\mathcal{F}}
\newcommand{\cG}{\mathcal{G}}

\newcommand{\cI}{\mathcal{I}}
\newcommand{\cJ}{\mathcal{J}}

\newcommand{\cO}{\mathcal{O}}

\newcommand{\cV}{\mathcal{V}}

\newcommand{\rD}{\mathrm{D}}

\newcommand{\rN}{\mathrm{N}}

\newcommand{\sT}{\mathscr{T}}

\DeclareMathOperator{\Hom}{Hom}
\DeclareMathOperator{\Spec}{Spec}

\newcommand{\colim}{\mathop{\mathrm{colim}}}

\newcommand{\id}{\mathrm{id}}
\newcommand{\ul}{\underline}

\newcommand{\lSm}{\mathrm{lSm}}

\newcommand{\lSch}{\mathrm{lSch}}

\newcommand{\Sm}{\mathrm{Sm}}
\newcommand{\SmlSm}{\mathrm{SmlSm}}

\newcommand{\sat}{\mathrm{sat}}
\newcommand{\gp}{\mathrm{gp}}

\newcommand{\divi}{\mathrm{div}}

\newcommand{\Bl}{\mathrm{Bl}}

\newcommand{\Gys}{\mathrm{Gys}}

\newcommand{\gr}{\mathrm{gr}}

\DeclareMathOperator{\cofib}{cofib}

\newcommand{\Sh}{\mathrm{Sh}}
\newcommand{\Sp}{\mathrm{Sp}}
\newcommand{\Spc}{\mathrm{Spc}}

\newcommand{\Mod}{\mathrm{Mod}}

\newcommand{\Sch}{\mathrm{Sch}}
\newcommand{\ad}{\mathrm{ad}}
\newcommand{\Ex}{\mathrm{Ex}}
\newcommand{\PrL}{\mathrm{Pr}^\mathrm{L}}

\newcommand{\Prst}{\mathrm{Pr}^\mathrm{st}}

\newcommand{\CAlg}{\mathrm{CAlg}}

\newcommand{\op}{\mathrm{op}}

\newcommand{\ev}{\mathrm{ev}}

\newcommand{\comp}{\mathrm{comp}}
\newcommand{\Fin}{\mathrm{Fin}}

\newcommand{\logSH}{\mathrm{logSH}}

\newcommand{\SH}{\mathrm{SH}}

\newcommand{\DM}{\mathrm{DM}}

\newcommand{\MS}{\mathrm{MS}}

\newcommand{\BMS}{\mathrm{BMS}}

\newcommand{\crys}{\mathrm{crys}}

\newcommand{\sNis}{\mathrm{sNis}}
\newcommand{\syn}{\mathrm{syn}}

\newcommand{\blogTHH}{\mathrm{log}\mathbf{THH}}
\newcommand{\logTC}{\mathrm{logTC}}

\newcommand{\blogTC}{\mathrm{log}\mathbf{TC}}

\newcommand{\lc}{\mathrm{lc}}

\newcommand{\Th}{\mathrm{Th}}
\newcommand{\unit}{1}

\newcommand{\MZ}{\mathbf{M}\mathbb{Z}}
\newcommand{\KGL}{\mathbf{KGL}}
\newcommand{\logKGL}{\mathrm{log}\mathbf{KGL}}
\newcommand{\bOmega}{\mathbf{\Omega}}

\usepackage[bbgreekl]{mathbbol}
\usepackage{relsize}

\DeclareSymbolFontAlphabet{\mathbb}{AMSb} 
\DeclareSymbolFontAlphabet{\mathbbl}{bbold} 

\newcommand{\bPrism}{\widehat{\mathbf{\Delta}}}

\renewcommand{\cong}{\simeq}

\begin{document}
\title{Poincar\'e duality in logarithmic motivic homotopy theory}
\author{Doosung Park}
\address{Department of Mathematics and Informatics, University of Wuppertal, Germany}
\email{dpark@uni-wuppertal.de}
\subjclass[2020]{Primary 14F42; Secondary 14A21, 14F30}
\keywords{logarithmic motivic homotopy theory, Poincar\'e duality, crystalline cohomology}
\date{\today}
\begin{abstract}
By adapting arguments of Annala-Hoyois-Iwasa in the log setting,
we prove Poincar\'e duality for smooth projective morphisms in logarithmic motivic homotopy theory.
As an application,
we show that the crystalline cohomology of a log compactification is independent of the choice.
\end{abstract}
\maketitle

\section{Introduction}

Poincar\'e duality in motivic homotopy theory asserts that for every smooth projective (or proper) morphism of schemes $f\colon X\to S$,
the motive $M_S(X)$ is dualizable with dual
\[
\Th_X^{-1}(\Omega_f^1)
:=
f_\sharp (\Th_X(\Omega_f^1)^{-1}),
\]
where $\Th_X(\Omega_f^1)$ denotes the Thom space associated with the locally free sheaf $\Omega_f^1:=\Omega_{X/S}^1$.
One of the main advantages of this motivic formulation is that it applies uniformly to all cohomology theories representable in the motivic homotopy category.

\

In the setting of $\DM(k)$ \cite{MVW} over a perfect field $k$,
Friedlander-Voevodsky \cite[Theorem 8.2]{MR1764201} proved Poincar\'e duality assuming resolution of singularities.
D\'eglise \cite[Theorem 5.23]{MR2466188} removed the assumption of resolution of singularities.
Beilinson-Vologodsky \cite[6.7]{MR2399083} extended this to smooth proper morphisms.

\

In the setting of $\SH$ \cite{zbMATH01194164},
Voevodsky-R\"ondig-Ayoub \cite[Scholie 1.4.2]{Ayo071} (see also \cite[Theorem 2.4.28]{CD19}) proved Poincar\'e duality in the form
\[
f_\sharp
\xrightarrow{\simeq}
f_* \Sigma^{\Omega_f^1},
\]
where $\Sigma^{\Omega_f^1}:=\otimes_X \Th_X(\Omega_f^1)$.
Cisinski-D\'eglise \cite[Theorem 2.4.50]{CD19} extended this to smooth proper morphisms.

\

Recently, Annala-Iwasa-Hoyois \cite[Theorem 1.1(i)]{AHI} proved Poincar\'e duality for their $\infty$-category of motivic spectra $\MS$ assuming the existence of Tang's Gysin morphisms with naturality as stated in \cite[Theorem 2.3]{AHI},
which is unavailable at the time of writing.
The crucial distinction between $\MS$ and $\SH$ is that $\A^1$ is not contractible in $\MS$.
Consequently, many non $\A^1$-cohomology theories are representable in $\MS$ but not in $\SH$.

\

In contrast to $\MS$,
log Gysin morphisms are available in the logarithmic motivic homotopy $\infty$-category $\logSH$ due to Binda-Park-{\O}stv{\ae}r \cite{logSH}.
Using log Gysin morphisms instead,
we establish Poincar\'e duality for $\logSH$ as follows.
Hence Poincar\'e duality applies to non $\A^1$-invariant cohomology theories that admit suitable logarithmic extensions,
e.g.\ Hodge cohomology, topological cyclic homology, prismatic cohomology, syntomic cohomology, etc.

\begin{thmalpha}[See Theorem \ref{local.2}]
\label{intro.6}
Let $f\colon X\to S$ be a projective morphism of schemes.
Then
\[
f_\sharp
\xrightarrow{\simeq}
f_* \Sigma^{\Omega_f^1}
\colon
\logSH(X)
\to
\logSH(S).
\]
\end{thmalpha}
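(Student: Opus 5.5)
Note that "projective" here must mean smooth projective (with trivial log structures), since $\Omega^1_f$ is required to be locally free. I follow the strategy of Annala--Hoyois--Iwasa, with the log Gysin morphisms of Binda--Park--{\O}stv{\ae}r in place of Tang's Gysin maps.

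\textbf{Reduction to a pairing.} By the formal criterion for ambidexterity (as in \cite{AHI} or Cisinski--D\'eglise), it suffices to prove that $M_S(X)=f_\sharp 1_X$ is strongly dualizable in $\logSH(S)$ with dual $f_\sharp\Sigma^{-\Omega_f^1}1_X$. This must hold compatibly with base change and with the projection formula $f_\sharp(f^*(-)\otimes -)$. Concretely, I will construct a unit and a counit and check the two triangle identities. The counit is
\[
f_\sharp 1\otimes f_\sharp\Sigma^{-\Omega_f^1}1\simeq (f\times f)_\sharp \Sigma^{-p_2^*\Omega_f^1}1\to 1,
\]
obtained by composing two maps: the log Gysin map for the diagonal $\Delta\colon X\to X\times_S X$, whose normal bundle is $\Omega_f^{1,\vee}$ (equivalently $T_f$), and then the projection $f_\sharp 1\to 1$. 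The unit $1\to f_\sharp 1\otimes f_\sharp\Sigma^{-\Omega_f^1}1$ is the Gysin-type "transfer" map $1\to f_\sharp \Sigma^{-\Omega_f^1}1$ paired with the diagonal. Since $f$ is projective, factor $f$ as a closed immersion $i\colon X\to\P^n_S$ followed by the projection $p\colon\P^n_S\to S$. The unit is then built in two steps. First, I use the explicit duality for $\P^n_S$: this follows from the projective bundle formula in $\logSH$, which is available from \cite{logSH} via $(\P^n,\P^{n-1})$-invariance and the log Thom isomorphism. Second, I pull back along the log Gysin map of $i$, defined by deformation to the normal cone.

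\textbf{Triangle identities.} Following the AHI argument, each triangle identity reduces to a compatibility between Gysin maps. The first compatibility is that the Gysin maps of $\Delta$ and of the graph of the identity compose as expected under transverse pullback. The second is that the Gysin map of a section of a smooth projection is inverse to the purity equivalence. In \cite{logSH}, log Gysin maps are functorial for compositions of closed immersions and compatible with transverse (smooth) base change. I expect this to suffice once each identity is written as a composite of Gysin maps in the standard $X\times X\times X$ diagram. The remaining step is independence of the choice of embedding $i$. I handle it by comparing two embeddings through the product embedding into $\P^n\times\P^m$, using Gysin functoriality once more.

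\textbf{Main obstacle.} The hardest step will be the naturality and homotopy-coherence of the log Gysin maps: specifically, that they commute with the Thom-twist identifications, and that the excess-free base change square for $\Delta$ holds in $\logSH$ rather than only up to an unspecified homotopy. My first attempt will work in the homotopy category, where a triangle identity only needs to hold as an equality of maps. I will verify the two Gysin compatibilities there using the log deformation space $\mathrm{Bl}_{Z\times 0}(X\times\A^1)$ minus the strict transform, with its log structure. This reduces them to the case of the zero section of a vector bundle, where the Gysin map is tautologically the Thom class. Base change and projection-formula compatibility of the resulting duality then follow formally, because everything is built from $f_\sharp$, $f^*$ and Gysin maps, all of which are stable under pullback along $S'\to S$.
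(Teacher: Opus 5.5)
\textbf{Gap 1: the case $X=\P^n_S$.} Your whole plan rests on duality for $\P^n_S$, and the justification you give does not work. $\logSH$ is not oriented, so the projective bundle formula cannot mean a splitting $M_S(\P^n)\simeq\bigoplus_{k=0}^n 1_S(k)[2k]$. Applying the symmetric monoidal $\A^1$-localization $\logSH(S)\to\SH(S)$ would then split $\Sigma^\infty_+\P^2$, whose top cell is attached by the Hopf map $\eta\neq 0$. The cofiber sequences $M_S(\P^{n-1})\to M_S(\P^n)\to M_S(\P^n/\P^{n-1})\simeq 1_S(n)[2n]$ do show that $M_S(\P^n)$ is dualizable, but that is not what is needed. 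By Proposition \ref{alpha.3} you must show that the particular pairing $\ev$ built from the diagonal log Gysin map is perfect, i.e.\ that $\comp\colon\Th^{-1}_{\P^n}(\Omega^1_{\P^n/S})\to D(M_S(\P^n))$ is an isomorphism. Without an orientation there is no explicit coevaluation for $\P^n$ against which to check triangle identities.

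This is where the paper does its main work, by induction on $n$. Proposition \ref{Gysin.13} maps the twisted log Gysin cofiber sequence of $\P^{n-1}\subset\P^n$, via comparison maps, to the dual of the cellular cofiber sequence. In this Lefschetz-type duality, $\Th^{-1}_{(\P^n,\P^{n-1})}(\Omega^1_{(\P^n,\P^{n-1})/S})$ serves as the dual of the top cell. Proposition \ref{Gysin.14} then shows that the top-cell pairing is perfect by restricting to a point. It uses $(\P^n,\P^{n-1})$-invariance and the Thom-space invariance $\Th(\cE(-D))\simeq\Th(\cE)$ of Proposition \ref{blow.5}.

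\textbf{Gap 2: passing from $\P^n$ to $X\subset\P^n$.} This step relies on inputs that are not in \cite{logSH}. Compatibility of log Gysin maps with compositions of closed immersions has to be proved in this paper (Theorem \ref{Gysin.11}, via the double deformation space of Proposition \ref{Gysin.7}). The stronger AHI-style naturality is not available at all.

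Your second compatibility asks that the Gysin map of a section of $\P^n_X\to X$ (the graph of $i$) undo the $\P^n$-transfer. Deformation to the normal bundle does not reach this. The special fibre of the deformation is a vector bundle over $X$, not a $\P^n$-bundle, so the transfer does not deform along with the section. This statement is really the $\P^n$ case again, transported by base change.

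In $\SH$ the purity $p_\sharp i_*\simeq(pi)_\sharp\Sigma^{\rN_i}$ that makes this step work would come from localization, which fails in $\logSH$ (Proposition \ref{local2.2}). The paper instead proves this purity directly (Theorem \ref{local.1}). It uses strict Nisnevich descent down to the zero section $Z\subset Z\times\P^s$, where the statement follows from the $\P^s$ case. It then deduces $\mathfrak{p}_{pi}$ from $\mathfrak{p}_p$ by Proposition \ref{Thom.3}. No coevaluation, no triangle identities and no independence of the embedding are ever needed.
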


The global structure of the proof of Theorem \ref{intro.6} is analogous to the proof of \cite[Theorem 1.1(i)]{AHI}, but in several local arguments we proceed differently since the naturality of Tang's Gysin morphisms as stated in \cite[Theorem 2.3]{AHI} is not available for log Gysin morphisms too.
For example, compare \cite[Proposition 4.4]{AHI} and Proposition \ref{Gysin.13}.
Instead, we prove a weaker naturality of log Gysin morphisms in Theorem \ref{Gysin.11} arguing as D\'eglise \cite[Theorem 4.32]{MR2466188},
which is enough for our purposes.

\

We use Theorem \ref{intro.6} to prove that the localization property is false for closed non-open immersions of smooth schemes,
see Proposition \ref{local2.2}.

\

A log smooth fs log scheme can be viewed as a ``smooth scheme with boundary.''
The version of Poincar\'e duality for manifolds with boundary, which is called Lefschetz duality,
asserts
\[
H^{*}(M,\partial M)
\simeq
 H_{n-*}(M)
\]
for every $n$-dimensional orientable compact manifold $M$ with boundary $\partial M$.
Inspired by these,
we establish the following motivic version of Lefschetz duality.

\begin{thmalpha}[See Theorem \ref{local.3}]
Let $Z\to X$ be a closed immersion of smooth projective schemes over a scheme $S$.
Then $\Sigma^\infty X/Z\in \logSH(S)$ is dualizable with dual
\(
\Th_{(X,Z)}^{-1}(\Omega_{(X,Z)/S}^1),
\)
where $(X,Z)$ is the associated log smooth fs log scheme with boundary $Z$.
\end{thmalpha}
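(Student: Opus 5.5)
We deduce the statement from Theorem \ref{intro.6} applied to the projective log smooth morphism $g\colon (X,Z)\to S$. The proof has two parts: identify $\Sigma^\infty X/Z$ with $g_\sharp \unit$, then use duality for $g$.

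\emph{Step 1: identifying the motive.} In $\logSH$ the log scheme $(X,Z)$ should compute the relative motive. We compare $(X,Z)$ with the cofiber of $Z\to X$. Here $(X,Z)$ denotes the log scheme with compactifying log structure along $Z$. When $Z$ has codimension $\geq 1$, this means $(X,Z)$ is the complement of $Z$ in the log sense.

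In the divisor case, $(\Bl_Z X, E)$ is weakly equivalent to $(X,Z)$ in $\logSH$ by the blow-up invariance of $\logSH$, where $E$ is the exceptional divisor. The log Gysin cofiber sequence of \cite{logSH} reads
\[
\Sigma^\infty(X,Z)\to\Sigma^\infty X\to \Th_Z(\rN_Z X).
\]
This does not directly give $X/Z$; it gives $X/(X-Z)$. So I must reread the statement. The intended reading should be the following: the log motive $\Sigma^\infty(X,Z)$ is dual to $X/Z$, and the dual of $X/Z$ is $\Th^{-1}_{(X,Z)}(\Omega^1_{(X,Z)/S})$, which is $g_\sharp$ of a negative Thom twist.

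\emph{Step 2: dualizing the cofiber sequence.} Both $\Sigma^\infty X=p_\sharp\unit$ and $\Sigma^\infty Z=q_\sharp\unit$ are dualizable by Theorem \ref{intro.6}, with duals $\Th_X^{-1}(\Omega_X^1)$ and $\Th_Z^{-1}(\Omega_Z^1)$. Hence the cofiber $X/Z$ is dualizable, and its dual is
\[
\mathrm{fib}\bigl(\Th_X^{-1}(\Omega^1_X)\to \Th_Z^{-1}(\Omega^1_Z)\bigr),
\]
where the map is the dual of $Z\to X$. By the formalism of Theorem \ref{intro.6}, this dual map is a Gysin-type map.

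\emph{Step 3: identifying the fiber.} It remains to show that this fiber is $g_\sharp \Th^{-1}(\Omega^1_{(X,Z)/S})$. To do this, I apply the log Gysin sequence for $(X,Z)$ twisted by the virtual bundle $-\Omega^1_{(X,Z)}$. On $X$, the bundle $\Omega^1_{(X,Z)}$ differs from $\Omega^1_X$ through the residue sequence
\[
0\to \Omega^1_X\to \Omega^1_{(X,Z)}\to \cO_Z\to 0
\]
in the divisor case. In general I reduce to the divisor case by blowing up. Restricted to $Z$, the twist $-\Omega^1_{(X,Z)}$ becomes $-\Omega^1_Z-\rN+\ldots$, and this matches the Thom twist appearing in the Gysin term with the $\Th^{-1}_Z(\Omega^1_Z)$ above.

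\emph{Main obstacle.} The hardest step is checking that the map obtained from the twisted log Gysin sequence agrees with the dual of the restriction $Z\to X$. This is exactly a naturality statement for log Gysin maps with respect to duality. I would establish it using the weak naturality of Theorem \ref{Gysin.11} together with Proposition \ref{Gysin.13}, first reducing by deformation to the normal cone to the case $X=\P(\rN\oplus\cO)$ with $Z$ the zero section, where both maps can be compared explicitly.
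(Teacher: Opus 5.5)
Your Steps 2 and 3 have the same architecture as the paper's proof of Theorem~\ref{local.3}. Theorem~\ref{local.2} and Proposition~\ref{alpha.3} make $M_S(X)$ and $M_S(Z)$ dualizable, with the comparison maps $\comp$ isomorphisms. One then needs a morphism of cofiber sequences from a twisted log Gysin sequence to $D(M_S(X/Z))\to D(M_S(X))\to D(M_S(Z))$. Drop Step 1 entirely, for three reasons:
\begin{itemize}
\item $g_\sharp\unit=M_S(X,Z)$ is the fiber of the log Gysin map $M_S(X)\to\Th_Z(\rN_Z X)$. It is a log model of $X-Z$, not of $X/Z$.
\item Theorem~\ref{local.2} only covers strict smooth morphisms, so it says nothing about the non-strict $g\colon (X,Z)\to S$.
\item ``$\Sigma^\infty(X,Z)$ is dual to $X/Z$'' is false without the Thom twist.
\end{itemize}

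The genuine gap is in Step 3, exactly where you pass between $\Omega^1_{X/S}$ and $\Omega^1_{(X,Z)/S}$. Twisting the log Gysin sequence by $-\Omega^1_{(X,Z)/S}$ is the wrong twist. In the divisor case the middle term becomes $\Th_X^{-1}(\Omega^1_{X/S}(\log Z))$, which is not $D(M_S(X))\simeq\Th_X^{-1}(\Omega^1_{X/S})$. For instance, for a point $Z\subset\P^1$ over $\mathbb{C}$ this replaces $\cO(-2)$ by $\cO(-1)$, and the two objects are already distinguished by $Sq^2$ after $\A^1$-localization and Betti realization. In general the last term is not $\Th_Z^{-1}(\Omega^1_{Z/S})$ either; your ``$+\ldots$'' hides exactly this mismatch.

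You have to twist by $-\Omega^1_{X/S}$. The conormal sequence and Proposition~\ref{Gysin.9} then give the last term $\Th_Z^{-1}(\Omega^1_{Z/S})$. The first term, however, is $\Th^{-1}_{(X,Z)}(u^*\Omega^1_{X/S})$, with $u\colon (X,Z)\to X$ the canonical morphism. Identifying it with $\Th^{-1}_{(X,Z)}(\Omega^1_{(X,Z)/S})$ is a real input that the residue sequence cannot supply. Its cokernel $\cO_Z$ is not a vector bundle, so Proposition~\ref{Gysin.9} does not apply, and on $X$ itself the comparison fails, as the example shows. This is Proposition~\ref{blow.4}. Locally on the blow-up one has $u^*\Omega^1_{X/Z}\simeq\Omega^1_{(X,Z)/Z}(-E)$, with $E$ the exceptional divisor. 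Proposition~\ref{blow.5} then gives $\Th_{(X,D)}(\cE(-D))\simeq\Th_{(X,D)}(\cE)$ by an explicit dividing-cover argument, a purely logarithmic phenomenon.

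The compatibility you flag as the main obstacle is exactly Proposition~\ref{Gysin.13}. Its proof uses Theorem~\ref{Gysin.12} for $Z\to Z\times_S Z\to Z\times_S X$, together with Construction~\ref{blow.3}, which already incorporates Proposition~\ref{blow.4}. Once you invoke it, two of the three vertical comparison maps are isomorphisms, hence so is the third, and the proof is over. The proposed reduction by deformation to $\P(\rN\oplus\cO)$ is unnecessary. It is also not evidently available: the evaluation maps use the properness of $X\to S$ and the diagonal of $X\times_S X$, and the non-proper deformation space does not retain either.
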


This is a crucial ingredient of Theorem \ref{blow.6},
where we prove that if Poincar\'e duality holds for three corners of a smooth blow-up square, then Poincar\'e duality holds for the remaining corner too.

\

Assuming the existence of Tang's Gysin morphisms,
as a consequence of Poincar\'e duality for $\MS$,
Annala-Hoyois-Iwasa \cite[Proposition 6.27]{AHI} established a cohomology theory of smooth schemes over a perfect field $k$ of characteristic $p>0$ that agrees with the crystalline cohomology of any log compactification.
Previously, Ertl-Shiho-Sprang \cite{ESS} established such a cohomology theory assuming resolution of singularities and weak factorization,
and Merici \cite{Mer25} established it assuming resolution of singularities and represented it in $\DM(k)$.
We repeat the argument of Annala-Hoyois-Iwasa to prove the following result.

\begin{thmalpha}[See Theorem \ref{coh.8}]
\label{intro.5}
Let $k$ be a perfect field of characteristic $p>0$.
Then there exists a cdh sheaf of complexes
\(
R\Gamma_\crys^\lc
\)
on the category of schemes over $k$ such that
\[
R\Gamma_\crys^\lc(X-\partial X)
\simeq
R\Gamma_\crys(X)
\]
for every projective log smooth fs log scheme $X$ over $k$.
\end{thmalpha}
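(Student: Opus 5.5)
Following the strategy of Annala--Hoyois--Iwasa \cite[Proposition 6.27]{AHI}, I will construct $R\Gamma_\crys^\lc$ as a cohomology theory represented in $\logSH(k)$, evaluated on a log compactification. I will then use Theorem \ref{intro.6} and its Lefschetz variant (Theorem \ref{local.3}) to show that the value does not depend on the compactification. Finally I will extend to all $k$-schemes by cdh sheafification.

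\emph{Step 1: representability.} Log crystalline cohomology $X\mapsto R\Gamma_\crys(X/W(k))$ on log smooth fs log schemes over $k$ satisfies strict Nisnevich descent and $(\P^n,\P^{n-1})$-invariance (more generally dividing invariance). It also has a projective bundle formula, which gives the Bott periodicity needed for $\P^1$-stability. Hence it is represented by a motivic spectrum $\mathbf{E}_\crys\in\logSH(k)$ with
\[
R\Gamma_\crys(X)\simeq \Hom_{\logSH(k)}(\Sigma^\infty X_+,\mathbf{E}_\crys)
\]
for $X$ log smooth over $k$. This step is mostly assembling known properties of log crystalline cohomology, as is done for other log cohomology theories in \cite{logSH}.

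\emph{Step 2: independence of compactification.} Let $X$ be a projective log smooth fs log scheme over $k$ with $U=X-\partial X$. Passing to a dividing cover, which does not change $\Sigma^\infty X_+$ in $\logSH(k)$, I may assume $X=(\ul{X},\partial X)$ with $\ul X$ smooth projective and $\partial X$ a strict normal crossing divisor. The aim is to show that $R\Gamma_\crys(X)$ depends only on $U$. By Theorem \ref{intro.6} and Theorem \ref{local.3}, $\Sigma^\infty X_+$ is dualizable, and its dual is the Thom spectrum $\Th^{-1}_X(\Omega^1_{X/k})$, which the Lefschetz duality identifies with a compactly supported object $\Sigma^\infty\ul X/\partial X$ twisted by the tangent bundle. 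Dualizing therefore expresses $R\Gamma_\crys(X)$ as a twist of the crystalline homology of the pair $(\ul X,\partial X)$. Let $X'$ be another compactification of $U$. Weak factorization is not available, so I compare $X$ and $X'$ through a common third compactification: take the closure of $U$ in $X\times X'$ and resolve it by blow-ups in the boundary. To make this possible without resolution of singularities, I will follow AHI and allow alterations, or work with de Jong-style hypercovers, using the cdh/h-descent of rational crystalline cohomology. The blow-up square result, Theorem \ref{blow.6}, handles the remaining comparison: a blow-up with centre in the boundary induces an isomorphism on these compactly supported, dualized motives.

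\emph{Step 3: cdh extension.} Define $R\Gamma_\crys^\lc$ on smooth $k$-schemes by $U\mapsto R\Gamma_\crys(X)$, where $X$ is any log compactification as above; Step 2 makes this well defined and functorial. Then take the cdh sheafification, or left Kan extend from smooth schemes and cdh-sheafify. The identity $R\Gamma^\lc_\crys(X-\partial X)\simeq R\Gamma_\crys(X)$ survives sheafification because the presheaf already satisfies blow-up (cdh) descent on smooth schemes, by the same blow-up square argument.

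I expect Step 2 to be the main obstacle: showing that the boundary-compactly-supported motive is invariant under changing compactification without resolution of singularities. Here I would copy AHI's reduction as closely as possible, with Theorem \ref{local.3} playing the role of their Lefschetz-type dualizability.
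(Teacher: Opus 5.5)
\textbf{There is a genuine gap, in Step 2, and Step 3 inherits it.}

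\emph{Step 2.} Comparing two compactifications of $U$ geometrically needs resolution of singularities and weak factorization in characteristic $p$. These are exactly the hypotheses Ertl--Shiho--Sprang and Merici had to impose, and alterations do not replace them here. De Jong's alterations can have degree divisible by $p$, so a trace argument recovers at most $R\Gamma_\crys(X)[1/p]$, not the integral $R\Gamma_\crys(X)$ of the statement.

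Theorem \ref{blow.6} does not give the invariance you attribute to it. It is a three-out-of-four statement about $\mathfrak{p}$ being an isomorphism for a smooth blow-up square. The invariance that does hold (admissible blow-ups along smooth centers, i.e.\ dividing invariance) is too weak to connect arbitrary compactifications.

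Dualizing only moves the problem. Lefschetz duality identifies a Thom twist of $M_k(X)$ with $D(M_k(\ul{X}/\partial X))$, so you would still need $M_k(\ul{X}/\partial X)$ to be independent of the compactification. Nothing in $\logSH$ forces this: the localization property that gives it in $\SH$ fails in $\logSH$ (Proposition \ref{local2.2}).

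\emph{Step 3.} Without resolution, a smooth $k$-scheme need not admit a projective log smooth compactification at all. So your presheaf is not defined on all smooth schemes, and cdh-sheafifying it need not preserve its values on smooth schemes.

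\textbf{The missing idea is a module structure over $\omega^*\unit_k$}, where $\omega^*$ is the right adjoint of the $\A^1$-localization $\omega_\sharp\colon \logSH(k)\to \SH(k)$.
\begin{enumerate}[(a)]
\item The paper uses $\bPrism$, which represents crystalline cohomology, and shows it is an $\omega^*\unit$-module (Example \ref{coh.2}). This uses the log cyclotomic trace $\omega^*\KGL\simeq\logKGL\to\blogTC$ and the BMS filtration. Your Step 1 produces a representing spectrum but not this structure, and this structure is what the argument actually needs.
\item Poincar\'e duality is then used not to compare compactifications but to show that $f_\sharp\simeq f_*\Sigma^{\Omega^1_f}$ commutes with $\omega^*$ for smooth projective $f$. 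This gives $M_k(X)\otimes\omega^*\unit_k\simeq\omega^*M_k(X)$ (Proposition \ref{coh.7}).
\item Induction on the number of boundary components, comparing the log Gysin sequence with the Gysin sequence in $\SH$, upgrades this to $M_k(X)\otimes\omega^*\unit_k\simeq\omega^*M_k(X-\partial X)$ for projective log smooth $X$ (Proposition \ref{coh.1}).
\item Hence for an $\omega^*\unit$-module $\bE$, the spectrum $\hom(\Sigma^\infty X_+,\bE)$ depends only on the $\SH$-motive of $X-\partial X$. One then defines $\bE^\lc(Y):=\hom(\omega^*\Sigma^\infty g_!g^!\unit_Y,\bE)$ for every $k$-scheme $g\colon Y\to\Spec k$, using the six functors on $\SH$.
\item This $\bE^\lc$ is a cdh sheaf by Cisinski--D\'eglise (Construction \ref{coh.3}). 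No compactification, resolution or alteration is required.
\end{enumerate}
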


For this,
Annala-Hoyois-Iwasa showed that the crystalline cohomology spectrum is a module over the $\A^1$-localized sphere spectrum.
This is directly shown using the $\A^1$-invariance of $K$-theory and the cyclotomic trace.
To achieve this in the log setting,
we need the log cyclotomic trace constructed in \cite{logSHF1} and \cite{logSHF2},
which is a nontrivial input.

\

In summary,
$\logSH$ is a well-behaved $\infty$-category
equipped with log Gysin morphisms and log cyclotomic traces.
These two tools allow us to prove Poincar\'e duality for $\logSH$ and the independence of the crystalline cohomology of log compactifications.

\subsection*{Notation and conventions}
We refer to Ogus' book \cite{Ogu} for log geometry.
Unless otherwise stated,
all schemes and log schemes in this paper are assumed to be quasi-compact and quasi-separated,
and all log schemes are assumed to equip a Zariski log structure.
We employ the following notation throughout the paper.

\

\begin{tabular}{l|l}
$\Sch$ & category of qcqs schemes
\\
$\lSch$ & category of qcqs fs log schemes
\\
$\Sm$ & class of smooth morphisms in $\Sch$
\\
$\lSm$ & class of log smooth morphisms in $\lSch$
\\
$\lSm^\sat$ & class of saturated log smooth morphisms in $\lSch$
\\
$\Spc_*$ & $\infty$-category of pointed spaces
\\
$\Sp$ & $\infty$-category of spectra
\\
$\Hom_{\cC}$ & hom space in $\cC$
\\
$\hom_{\cC}$ & hom spectrum in a stable $\infty$-category $\cC$
\\
$\Sh_t(\cC,\cV)$ & $\infty$-category of $t$-sheaves on $\cC$ with values in $\cV$
\\
$\id \xrightarrow{\ad} GF$ & unit of an adjoint pair $(F,G)$
\\
$FG \xrightarrow{\ad'} \id$ & counit of an adjoint pair $(F,G)$
\end{tabular}

\subsection*{Acknowledgement}
We thank Federico Binda and Hiroyasu Miyazaki for helpful comments on the subject of this paper.
This research was conducted in the framework of the DFG-funded research training group GRK 2240: \emph{Algebro-Geometric Methods in Algebra, Arithmetic and Topology}.

\section{Saturated logarithmic motivic homotopy theory}
\label{sat}

Recall from \cite{zbMATH07027475} that a morphism of fs log schemes $X\to S$ is \emph{saturated} if for every morphism of fs log schemes $S'\to S$, the fiber product of $X$ and $S'$ over $S$ in the category of log schemes is saturated.
Note that the class of saturated morphisms in the category of fs log schemes is closed under compositions and pullbacks.

Let $\lSm^\sat$ denote the class of saturated log smooth morphisms,
let $\divi$ denote the class of dividing covers \cite[Definition 2.2.8]{logSH},
and let $\sNis$ denote the strict Nisnevich topology \cite[Definition 3.1.4]{logDM}.
For $S\in \lSch$,
we define
\[
\logSH^\sat(S)
:=
\Sp_{\P^1}(\Sh_\sNis(\lSm_S^\sat,\Sp)[\square^{-1},\divi^{-1}]),
\]
where $\Sp_{\P^1}$ denote the $\P^1$-stabilization,
and $\square:=(\P^1,\infty)$.
This is a variant of $\logSH(S)$ due to Binda, Park, and {\O}stv{\ae}r \cite[Definition 2.4.13]{logSH}.
If $S\in \Sch$,
then $\logSH^\sat(S)=\logSH(S)$.
If $S\in \lSch$,
then $\logSH^\sat(S)$ has a technical advantage,
see e.g.\ Proposition \ref{Gysin.26}.

In this paper,
for flexibility,
we will often work with a general axiomatic setting instead of $\logSH^\sat$ as follows:
Let
\[
\sT
\in \Sh_\sNis(\lSch,\CAlg(\Prst))
\]
be a strict Nisnevich sheaf of presentably symmetric monoidal stable $\infty$-categories satisfying the following conditions:
\begin{itemize}
\item For every morphism of fs log schemes $f\colon X\to S$,
$f^*:=\sT(f)$ admits a right adjoint $f_*$.
If $f\in \lSm^\sat$,
then $f^*$ admits a left adjoint $f_\sharp$.
\item ($\lSm^\sat$ base change formula)
For every cartesian square of fs log schemes
\[
\begin{tikzcd}
X'\ar[r,"g'"]\ar[d,"f'"']&
X\ar[d,"f"]
\\
S'\ar[r,"g"]&
S
\end{tikzcd}
\]
such that $f\in \lSm^\sat$,
the exchange transformation
\[
\Ex
\colon
f_\sharp'g'^*
\to
g^*f_\sharp
\]
is an isomorphism.
\item ($\lSm^\sat$ projection formula)
For every $f\in \lSm^\sat$,
the exchange transformation
\[
\Ex
\colon
f_\sharp((-)\otimes_X f^*(-))
\to
f_\sharp(-)\otimes_S (-)
\]
is an isomorphism.
\item ($\P^1$-stability) The Tate twist $1(1)$ is invertible in $\sT(S)$.
\item Let $S\in \lSch$.
Then $\sT(S)$ is generated under colimits, shifts, and Tate twists by $M_S(X):=f_\sharp \unit_X$ for $X\in \lSm_S^\sat$,
where $f\colon X\to S$ is the structural morphism.
\item ($\square$-invariance)
Let $S\in \lSch$.
Then the functor $p^*\colon \sT(S)\to \sT(S\times \square)$ is fully faithful,
where $p\colon S\times \square\to S$ is the projection.
\item (div-invariance)
Let $S\in \lSch$.
Then for every dividing cover $f\colon Y\to X$ in $\lSm_S^\sat$,
$M_S(f)\colon M_S(Y)\to M_S(X)$ is an isomorphism.
\end{itemize}

For example, a formal argument shows that $\logSH^\sat$ satisfies these conditions.
Moreover,
the universal property of $\logSH^\sat$ as in \cite[Proposition 3.3.2]{logSH} yields a natural symmetric monoidal colimit preserving functor
\[
\logSH^\sat(S)
\to
\sT(S)
\]
for $S\in \lSch$ sending $\Sigma^\infty X_+$ to $M_S(X)$ for every $X\in \lSm_S^\sat$.
When $Y\to X$ is a morphism in $\lSm_S^\sat$, we set
\[
M_S(X/Y):=\cofib(M_S(Y)\to M_S(X)).
\]
When
\[
\begin{tikzcd}
W\ar[d]\ar[r]&V\ar[d]
\\
Y\ar[r]&X
\end{tikzcd}
\]
is a commutative square in $\lSm_S^\sat$,
we set
\[
M_S\left(\frac{X/Y}{V/W}\right)
:=
\cofib(\colim(M_S(Y)\leftarrow M_S(W) \to M_S(V))\to M_S(X)).
\]

\section{Thom space}
We keep working with $\sT$ in \S \ref{sat}.
In this section,
we generalize the notion of Thom spaces \cite[Definition 3.2.4]{logSH} to the case where the base $S$ is an arbitrary fs log scheme.
We also discuss a technical result on the invariance of Thom spaces in Proposition \ref{blow.5}.

The following result allows us to reduce to the case of strict smooth schemes in many situations.

\begin{prop}
\label{Gysin.26}
Let $Z\to Y\to X$ be strict closed immersions in $\lSm_S^\sat$.
Then Zariski locally on $X$,
$f$ admits a factorization $X\to S'\to S$ with $S'\in \lSm_S^\sat$ such that $X,Y,Z\to S'$ are strict smooth.
\end{prop}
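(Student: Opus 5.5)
The plan is to construct $S'$ locally as a product of $S$ with an affine space, using local charts of the saturated log smooth morphism $f\colon X\to S$. Here $f$ denotes the structural morphism of $X$. The argument is local on $X$, so we may assume $X$ affine and work near a point $x\in Z$. Points outside $Z$ or $Y$ are handled by the same argument with $Z$ or $Y$ replaced by the empty set.

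First I would invoke Kato's chart theorem for log smooth morphisms in the form of \cite{Ogu}. Zariski locally near $x$ there is a chart $P\to Q$ of $f$, with $P\to \cM_S$ and $Q\to\cM_X$ charts, such that:
\begin{itemize}
\item $P^\gp\to Q^\gp$ is injective with torsion part of the cokernel invertible on $S$;
\item $X\to S\times_{\A_P}\A_Q$ is strict étale;
\item $Q$ may be taken neat at $x$, i.e.\ $Q\cong \overline{\cM}_{X,\bar x}$ up to a free part.
\end{itemize}
Since $f$ is saturated, $P\to Q$ can be chosen saturated. This is Tsuji's criterion, as in \cite{zbMATH07027475}. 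Put $T:=S\times_{\A_P}\A_Q$, so that $X\to T$ is strict étale. In general $T$ is only étale-locally what we want, so we replace it with something Zariski local. The intended candidate is
\[
S':=S\times_{\A_P}\A_{Q'},
\]
where $Q'$ is chosen so that $S'\to S$ is saturated log smooth and $X\to S'$ is strict smooth. Such a $Q'$ is possible because $X$ is strict smooth over $S\times_{\A_P}\A_Q$ exactly when the log structure is pulled back.

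The cleanest choice is to take $S'$ to be $T$ itself, i.e.\ $Q'=Q$; then $X\to S'$ is strict étale, hence strict smooth. Since $P\to Q$ is saturated and Kummer-type conditions hold, $S'\to S$ lies in $\lSm^\sat_S$. The remaining point is to get $Y,Z\to S'$ strict smooth.

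The closed immersions $Z\to Y\to X$ are strict. Hence the charts restrict: $Q\to\cM_Y$ and $Q\to\cM_Z$ are still charts, so $Y\to T$ and $Z\to T$ are strict. For smoothness, strict implies the underlying morphism is smooth iff it is log smooth. By log smoothness of $Y,Z$ over $S$, the log cotangent sequence gives
\[
\Omega^1_{Y/S}\cong \Omega^1_{T/S}|_Y\oplus\Omega^1_{Y/T}.
\]
Here $\Omega^1_{T/S}=\cO\otimes (Q^\gp/P^\gp)$. Then $Y\to T$ is log smooth, i.e.\ smooth, provided $\Omega^1_{T/S}|_Y\to\Omega^1_{Y/S}$ is locally split injective, or equivalently $\Omega^1_{X/S}|_Y\to \Omega^1_{Y/S}$ restricted to $\Omega^1_{T/S}$ stays split injective. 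This need not hold for the full chart: the $d\log$ of the free part of $Q$ may vanish on $Y$.

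So the main step is to shrink the chart, and this is where the neatness at $x$ enters. Choose $Q$ neat at $x\in Z$. Because $Z\to X$ is strict, $\overline{\cM}_{Z,\bar x}=\overline{\cM}_{X,\bar x}=Q$ has no free part, and $Q^\gp/P^\gp$ then injects into the log differentials of $Z$ over $S$ at $x$. This uses log smoothness of $Z$: the log differentials of the sharp monoid part survive, by the standard computation $\Omega^1_{Z/S}\otimes k(x)\supseteq k(x)\otimes \overline{\cM}^\gp_{Z,\bar x}/\overline{\cM}^\gp_{S}$, suitably. Hence $Z\to T$ is smooth at $x$. Then $Y\to T$ is smooth along $Z$, since $\Omega^1_{T/S}|_Y\to\Omega^1_{Y/S}$ factors the injection for $Z$. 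After Zariski shrinking, $X,Y,Z\to T$ are all strict smooth. I expect this step — ensuring the chart is neat so that the monoid directions survive on the smallest stratum $Z$ — to be the main obstacle. In particular one must check carefully that the saturatedness of $P\to Q$ is preserved under the neat choice.
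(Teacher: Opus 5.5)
Your skeleton is the paper's: a chart $\theta\colon P\to Q$ neat at $x$, the base $S'=S\times_{\A_P}\A_Q$, and the observation that strictness of $Z\to Y\to X$ makes $\theta$ a neat chart at $x$ for $Y\to S$ and $Z\to S$ too. The paper gets strict smoothness of $X,Y\to S'$ by applying \cite[Proposition A.4]{divspc} to each. You instead prove it by hand via the log Jacobian criterion. You check fiberwise injectivity of $k(x)\otimes Q^{\gp}/P^{\gp}\to\Omega^1_{Z/S}\otimes k(x)$ on the smallest stratum and deduce it for $Y$ and $X$ by factoring. That is a legitimate, more self-contained route.

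Two details of that argument need tightening. To make the ``standard computation'' precise, use the residue surjection $\Omega^1_{Z/S}\otimes k(x)\to k(x)\otimes(\overline{\mathcal{M}}^{\gp}_{Z,x}/\overline{\mathcal{M}}^{\gp}_{S,f(x)})$, $d\log m\mapsto\bar m$. This is a quotient, not a subspace as you wrote, and its composite with the map from $k(x)\otimes Q^{\gp}/P^{\gp}$ is an isomorphism by neatness. Also, your displayed splitting of $\Omega^1_{Y/S}$ presupposes the conclusion. What you actually need is only the criterion: if $Y\to S$ and $S'\to S$ are log smooth and $\Omega^1_{S'/S}|_Y\to\Omega^1_{Y/S}$ is locally split injective, then $Y\to S'$ is log smooth.

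Two further points are wrong or missing as written. First, a chart neat at $x$ does not in general make $X\to S\times_{\A_P}\A_Q$ strict \'etale. For $X=\A^1_S$ with trivial log structures, the neat chart is $P=Q=0$ and $X\to S'=S$ is only smooth. Getting \'etaleness requires adjoining units to $Q$, which destroys neatness and, as you noticed yourself, can destroy smoothness of $Y,Z\to S'$. So drop the \'etale claims, including the ``cleanest choice'' paragraph. Strict smoothness of $X\to S'$ follows from your differential argument exactly as for $Y$. The Zariski-local existence of a chart neat at $x$ comes from \cite[Theorem III.2.5.5]{Ogu} and \cite[Proposition A.4]{divspc}, as in the paper, not from Kato's \'etale-local chart theorem.

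Second, you leave the saturatedness of $S'\to S$ open, but it is immediate from neatness once you also take $P$ neat at $f(x)$. Then $\theta$ is identified with the stalk map $\overline{\mathcal{M}}_{S,f(x)}\to\overline{\mathcal{M}}_{X,x}$, which is saturated because $f$ is (Tsuji's stalkwise criterion, which you already invoke). Hence $\theta$ is injective with torsion-free $\gp$-cokernel \cite[Theorem IV.3.3.1, Theorem I.4.8.14(4)]{Ogu}. So $\A_Q\to\A_P$ is saturated and log smooth, and so is its base change $S'\to S$. This is exactly the paper's ``observe that $\theta$ is saturated''; with these repairs your argument goes through.
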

\begin{proof}
Let $x$ be a point of $X$.
We may assume that $S$ admits a chart $P$ neat at the image of $x$ in $S$.
By the implication (1)$\Rightarrow$(4) in \cite[Theorem III.2.5.5]{Ogu} and \cite[Proposition A.4]{divspc},
in a Zariski neighborhood of $x$ in $X$,
$X\to S$ admits a chart $\theta\colon P\to Q$ neat at $x$,
and $X\to S':=S\times_{\A_P} \A_Q$ is strict smooth.

Observe that $\theta$ is saturated.
By \cite[Theorem IV.3.3.1]{Ogu},
$\theta$ is injective,
and by \cite[Theorem I.4.8.14(4)]{Ogu},
the cokernel of $\theta^\gp$ is torsion free.
Hence we have $S'\in \lSm_S^\sat$ since $S'\in \lSm_S$ by \cite[Theorem IV.3.1.8]{Ogu}.

We claim that $Y,Z\to S'$ are strict smooth Zariski locally on $X$.
We only need to deal with $Y$ without loss of generality.
If $x\in Y$,
then since $\theta$ is also a chart of $Y\to S$ neat at $x$,
\cite[Proposition A.4]{divspc} implies that $Y\to S'$ is strict smooth.
If $x\notin Y$,
then consider the neighborhood $X-Y$ of $x$ to conclude.
\end{proof}

For a strict closed immersion $Z\to X$ in $\lSm_S^\sat$ with $S\in \lSch$,
we will use the adding boundary notation
\[
(X,Z)
:=
(\Bl_{\ul{Z}}\ul{X},E) \times_{\Bl_{\ul{Z}}\ul{X}} X,
\]
where $(\Bl_{\ul{Z}}\ul{X},E)$ is the fs log scheme with the underlying scheme $\Bl_{\ul{Z}}\ul{X}$ equipped with the Deligne-Faltings log structure \cite[\S III.1.7]{Ogu} associated with the exceptional divisor $E$.
We claim $(X,Z)\in \lSm_S^\sat$.
For this,
We can work Zariski locally on $X$.
Hence by Proposition \ref{Gysin.26},
we may assume that $X$ and $Z$ are strict smooth over $S$.
Then the claim is obvious.

\begin{df}
Let $S\in \lSch$, and let $X\in \lSm_S^\sat$.
A \emph{vector bundle $\cE\to X$} is a strict morphism of fs log schemes such that $\ul{\cE}\to \ul{X}$ is a vector bundle.
The \emph{Thom space of $\cE\to X$} is defined to be
\begin{equation}
\label{Gysin.0.6}
\Th_X(\cE)
:=
M_S(\cE/(\cE,X)),
\end{equation}
where $X$ in $(\cE,X)$ is regarded as the zero section of $\cE$.
The \emph{$\cE$-twisted suspension} is defined to be
\begin{equation}
\label{Gysin.0.7}
\Sigma^{\cE}
:=
\otimes_X \Th_X(\cE)
\colon
\sT(X)
\to
\sT(X).
\end{equation}
\end{df}

Let $D\colon \sT(S)\to \sT(S)$ denote the functor $\ul{\Hom}_{S}(-,1_S)$,
where $\ul{\Hom}_S$ denotes the internal hom in $\sT(S)$.

\begin{prop}
Let $X\in \lSch$,
and let $\cE\to X$ be a vector bundle.
Then $\Th_X(\cE)$ is invertible in $\sT(X)$.
\end{prop}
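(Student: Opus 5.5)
The plan is to reduce to the case of a trivial bundle over a strict smooth base, and then identify the Thom space there with a Tate twist, which is invertible by $\P^1$-stability.

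\emph{Step 1: invertibility is local.} An object $A\in\sT(X)$ is invertible if and only if the evaluation map $A\otimes DA\to 1_X$ is an isomorphism. The functors $j^*$ for Zariski open immersions $j$ are symmetric monoidal. They also commute with $D$: $j$ is strict smooth, hence in $\lSm^\sat$, so the $\lSm^\sat$ projection formula gives $j^*\ul{\Hom}(A,B)\simeq\ul{\Hom}(j^*A,j^*B)$. Since $\sT$ is a strict Nisnevich sheaf, isomorphisms can be checked on a Zariski cover. The construction of $\Th_X(\cE)$ is compatible with such restrictions by the $\lSm^\sat$ base change formula, because the relevant objects $\cE$ and $(\cE,X)$ are in $\lSm_X^\sat$. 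We may therefore work Zariski locally on $X$ and assume $\cE\simeq X\times\A^n$ with the strict log structure pulled back from $X$.

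\emph{Step 2: base change to a point.} For $\cE=X\times \A^n$, both $\cE$ and $(\cE,X)$ are pulled back from $\Spec\Z$, using the saturated log smooth pair $(\A^n,0)$. By the $\lSm^\sat$ base change formula applied to $p\colon X\to\Spec\Z$, we get
\[
\Th_X(\cE)\simeq p^*\Th_{\Spec \Z}(\A^n).
\]
Since $p^*$ is symmetric monoidal, it suffices to show that $\Th(\A^n)\in\sT(\Spec\Z)$ is invertible.

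\emph{Step 3: the trivial bundle.} We have $(\A^n,0)\simeq(\A^1,0)^{\times n}$ Zariski locally after dividing covers; more precisely, $\Bl_0\A^n$ with its exceptional divisor relates to the product by a dividing cover, up to Zariski localization. The projection formula then gives
\[
\Th(\A^n)\simeq\Th(\A^1)^{\otimes n},
\]
where the identification of the product $M(\A^n/(\A^n,0))$ with the tensor power uses div-invariance. We then identify $\Th(\A^1)=M(\A^1/(\A^1,0))$ with $1(1)[2]$. This uses the strict Nisnevich cover of $\P^1$ by $\A^1$ and $\P^1-0$ to get
\[
M(\A^1/(\A^1,0))\simeq M(\P^1/(\P^1,0)).
\]
Next, $\square$-invariance shows $(\P^1,0)\simeq\square$ is contractible, i.e.\ $M(\square)\simeq 1$. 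Hence $\Th(\A^1)\simeq\cofib(1\to M(\P^1))$, which is the reduced motive of $\P^1$, namely $1(1)[2]$. This is invertible by $\P^1$-stability.

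\emph{Main obstacle.} The hardest step is the product decomposition in Step 3 for $n\geq2$, since blowing up the origin does not commute with products. I would first try to avoid it by induction on $n$. Write $\cE=\cE'\oplus\cO$. A localization/cofiber-sequence argument, with the quotient $\frac{X/Y}{V/W}$ notation comparing $(\cE,X)$ with $(\cE,\cE')$ and $(\cE,X\times\A^1)$, should give
\[
\Th(\cE)\simeq\Th(\cE')\otimes\Th(\cO).
\]
The key input here is div-invariance: the blow-up along $X$ and the log structure from the two boundary divisors agree after a dividing cover.
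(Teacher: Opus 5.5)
Your Steps 1 and 2 are sound. Step 1 is in fact more careful than the paper about why $D$ commutes with Zariski restriction. The rank-one computation $\Th(\A^1)\simeq M(\P^1/(\P^1,0))\simeq\cofib(M(\square)\to M(\P^1))\simeq 1(1)[2]$ in Step 3 is also correct.

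The gap is the passage to rank $n\geq 2$. The relation you assert between $(\A^n,0)$ and $(\A^1,0)^{\times n}$, via dividing covers and Zariski localization, cannot exist. A dividing cover $Y\to X$ restricts to an isomorphism $Y-\partial Y\to X-\partial X$. But the locus of trivial log structure is $\A^n-0$ for $(\A^n,0)$ and $\mathbb{G}_m^n$ for $(\A^1,0)^{\times n}$. The conclusion $\Th(\A^n)\simeq\Th(\A^1)^{\otimes n}$ is true, but not for this reason.

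Your fallback induction does not close the gap either, for two reasons:
\begin{enumerate}
\item[(a)] Comparing $(\cE,X)$ with $(\cE,\cE')$ goes through $((\cE,X),(\cE',X))\to(\cE,\cE')$, which is the blow-up along the zero section $X$. For $n\geq 2$, $X$ lies inside the boundary divisor $\cE'$ without being a log stratum. So this is an admissible blow-up along a smooth center, not a dividing cover. Its invariance is \cite[Theorem 7.2.10]{logDM}, as used in Proposition~\ref{Gysin.27}.
\item[(b)] You must identify $M((\cE,X)/((\cE,X),(\cE',X)))$ with a Thom space over $(\cE',X)$. This needs the log purity isomorphism of Theorem~\ref{Gysin.25} (i.e.\ \cite[Theorem 7.5.4]{logDM}) for $(\cE',X)\to(\cE,X)$. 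Its normal bundle is the pullback of $\rN_{\cE'}\cE$ twisted by the exceptional divisor, so you then need Proposition~\ref{blow.5} to remove the twist.
\end{enumerate}
Neither input follows from div-invariance, $\square$-invariance and Zariski descent by the cofiber-sequence manipulations you sketch.

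The paper avoids this computation altogether. It reduces to trivial log structure on $X$ and, Zariski locally, to $\cE=\cO^d$. It then cites \cite[Proposition 3.2.7]{logSH} for $\Th_X(\cO^d)\simeq 1_X(d)[2d]$, transported to $\sT$ via the comparison functor $\logSH^\sat(X)\to\sT(X)$ of \S\ref{sat}. Your Steps 1--2 followed by that citation give essentially the paper's proof.

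If you want a self-contained argument instead, first prove the multiplicativity $\Th(\cE'\oplus\cO)\simeq\Th(\cE')\otimes\Th(\cO)$ via the log Gysin sequence, as in Propositions~\ref{Gysin.28} and \ref{Gysin.9}; these do not use invertibility. Then induct on the rank using your rank-one computation.
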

\begin{proof}
We may assume that $X$ has the trivial log structure since the general case follows using the pullback functor for the morphism $X\to \ul{X}$ removing the log structure.
We need to show that the induced morphism
\[
\Th_X(\cE)\otimes_X D(\Th_X(\cE))
\to
1_X
\]
is an isomorphism.
We can work Zariski locally on $X$,
so we may assume that $\cE$ is a trivial vector bundle of rank $d$.

In this case,
we have $\Th_X(\cE)\simeq \unit_X(d)[2d]$ by  \cite[Proposition 3.2.7]{logSH},
whence the result follows.
\end{proof}

\begin{df}
Let $S\in \lSch$, and let $X\in \lSm_S^\sat$.
For a vector bundle $\cE\to X$,
we set
\[
\Th_X^{-1}(\cE)
:=
f_\sharp (\Th_X(\cE)^{-1}).
\]
We also set
\[
\Sigma^{-\cE}:= \otimes_X \Th_X(\cE)^{-1} 
\colon
\sT(X)\to\sT(X),
\]
which is quasi-inverse to $\Sigma^\cE$.
\end{df}

The monoidality of $f^*$ yields a natural isomorphism
\[
f^*
\Sigma^{\cE}
\simeq
\Sigma^{f^* \cE} f^*.
\]
The corresponding exchange transformation
\[
\Ex
\colon
\Sigma^{\cE} f_*
\to
f_* \Sigma^{f^*\cE}
\]
is an isomorphism since $\Sigma^{\cE}$ and $\Sigma^{f^*\cE}$ are equivalences of $\infty$-categories.

For a ring $R$ with elements $r_1,\ldots,r_n$,
let $(R,\langle r_1,\ldots,r_n\rangle)$ denote the log ring $\N^n\to R$ that sends $(m_1,\ldots,m_n)\in \N^n$ to $r_1^{m_1}\cdots r_n^{m_n}\in R$.

For $S\in \Sch$,
let $\SmlSm_S$ denote the full subcategory of $\lSm_S$ spanned by those $X$ such that $\ul{X}\in \Sm_S$.
A morphism $f\colon Y\to X$ in $\SmlSm_S$ is called \emph{an admissible blow-up along a smooth center} if $\ul{Y}\to \ul{X}$ is the blow-up along a smooth center and $Y-\partial Y\to X-\partial X$ is an isomorphism,
see \cite[Definition 7.2.3]{logDM} for the equivalent one.

The induced morphism
\[
\Spec(R[x/y,y],\langle x/y,y\rangle)
\cup
\Spec(R[x,y/x],\langle x,y/x\rangle)
\to
\Spec(R[x,y],\langle x,y\rangle)
\]
is a dividing cover.
We will use some pullbacks of this dividing cover in the proof of the following result:

\begin{prop}
\label{blow.5}
Let $S\in \lSch$,
let $D\to X$ be a purely $1$-codimensional strict closed immersion in $\lSm_S^\sat$,
and let $\cE$ be a vector bundle on $(X,D)$.
Then the morphism
\[
\Th_{(X,D)}(\cE(-D))
\to
\Th_{(X,D)}(\cE)
\]
in $\sT(X,D)$ induced by the canonical inclusion $\cE(-D)\to \cE$ is an isomorphism.
\end{prop}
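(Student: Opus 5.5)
Write $Y:=(X,D)$. The proposition is a comparison of two objects of $\sT(Y)$, so I will argue Zariski locally on $X$ and reduce to an explicit computation in coordinates, where the dividing cover displayed just before the statement can be used.

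\textbf{Reduction to a local question.} The morphism $\Th_Y(\cE(-D))\to\Th_Y(\cE)$ is compatible with restriction to opens, and $\sT$ is a strict Nisnevich sheaf. Hence it suffices to check that it is an isomorphism Zariski locally on $X$.

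By Proposition \ref{Gysin.26}, applied to the strict closed immersion $D\to X$, we may assume $X$ and $D$ are strict smooth over some $S'\in\lSm_S^\sat$. Since everything is defined by pulling back along $S'$, we may replace $S$ by $S'$; saturated log smooth base change and the base change formula make the construction compatible.

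Shrinking further, we may assume:
\begin{itemize}
\item $X=S\times\A^n$ with coordinates $x,t_2,\ldots,t_n$;
\item $D=\{x=0\}$, so that $Y=S\times(\A^1,0)\times\A^{n-1}$;
\item $\cE$ is trivial of rank $r$.
\end{itemize}
Then $\cE(-D)\to\cE$ is given by multiplication by $x$ on one coordinate (or on all $r$ coordinates). Splitting $\cE$ as a direct sum and using multiplicativity of Thom spaces, $\Th(\cE\oplus\cF)\simeq\Th(\cE)\otimes\Th(\cF)$, which follows from the K\"unneth-type identification of the pairs, we reduce to $r=1$. After the base change $S\times\A^{n-1}\to S$ we may also drop the extra variables. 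So we must show the following: for $Y=S\times(\A^1,\langle x\rangle)$, the map $Y\times\A^1\to Y\times\A^1$, $(x,y)\mapsto(x,xy)$, induces an isomorphism
\[
M\bigl(\A^1_Y/(\A^1_Y,Y)\bigr)\to M\bigl(\A^1_Y/(\A^1_Y,Y)\bigr).
\]
All motives here are taken over $Y$. Note that $f_\sharp$ is not needed, since Thom spaces are defined via $M_Y$.

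\textbf{Key computation.} Write $T=(\A^1_Y,Y)$, i.e.\ $\Spec(R[x,y],\langle x,y\rangle)$ over the base, with $y$ the fiber coordinate. The source pair is $\A^1_Y\supset T$ with zero section $y=0$. The map $y'=xy$ lifts to the chart $\Spec(R[x,y'/x],\langle x,y'/x\rangle)$ of the dividing cover displayed before the statement. Indeed, the source $T$ is exactly one of the two affine pieces of the blow-up of $\Spec(R[x,y'],\langle x,y'\rangle)$ at the origin; the other piece is $\Spec(R[x/y',y'],\langle x/y',y'\rangle)$.

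The plan is to use div-invariance to replace the target $T'$ by this dividing cover, which is a union of the two pieces $U_1$ (the source chart) and $U_2$. Then Mayer--Vietoris (strict Nisnevich descent) gives
\[
M(T')\simeq\colim\bigl(M(U_1)\leftarrow M(U_1\cap U_2)\to M(U_2)\bigr).
\]
It then remains to show that $M(U_2)/M(U_1\cap U_2)$ maps isomorphically into $M(\A^1_Y)/M(U_1)$-type cofibers, that is, that the contributions of $U_2$ cancel. I would verify this by identifying the relevant pieces:
\begin{itemize}
\item $U_2$ is $\Spec(R[u,y'],\langle u,y'\rangle)$ with $u=x/y'$, a $(\A^1,0)$-bundle over $(\A^1_{y'},0)$ whose log structure contains $u$.
\item Over the base $Y$ with coordinate $x=uy'$, the piece $U_2$ and the intersection $U_1\cap U_2$ (where $u$ is invertible) should have equivalent motives over $Y$ via $\square$-invariance. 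I would argue this through the $\square$-contractibility of $(\A^1,0)$ along the $u$ direction, using that $(\P^1,\infty)$-invariance implies $M((\A^1,0)\times W)\simeq M(W)$ after dividing arguments.
\end{itemize}

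\textbf{Main obstacle.} I expect this last cancellation, showing the cofiber $M(U_2/U_1\cap U_2)$ vanishes in $\sT(Y)$, to be the hardest step. The difficulty is that the map $U_2\to Y$ is not a product projection, since $x=uy'$. What I would try first is a second application of the dividing cover, this time in the coordinates $(u,y')$, to present $U_2$ over $Y$ as a saturated log smooth scheme $\square$-homotopic to $U_1\cap U_2$. If that fails, the fallback is the explicit formula $\Th\simeq 1(1)[2]$ from \cite[Proposition 3.2.7]{logSH}: check that the induced endomorphism of $1_Y(1)[2]$ is the identity by comparing both sides with the trivial-log-structure locus $x\neq0$, where $x$ is a unit and the map is clearly an isomorphism.
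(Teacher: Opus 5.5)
Your setup is right as far as it goes: $(\cE(-D),Y)$ is one chart $U_1$ of the dividing cover $U_1\cup U_2$ of $(\cE,Y)$, and Mayer--Vietoris applies. The gap is the step you single out as the main obstacle, because it is false. $M_Y(U_2/U_1\cap U_2)$ does not vanish, and $(\A^1,0)$ is not $\square$-contractible, so $M((\A^1,0)\times W)\simeq M(W)$ fails.

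To see this, take $\sT=\logSH^\sat$, push forward along $f\colon Y\to\Spec R$, and test against $\bOmega^0$. The restriction $H^0(U_2,\cO)=R[u,y']\to H^0(U_1\cap U_2,\cO)=R[u^{\pm1},y']$ is not bijective.

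There is also a structural reason why no such vanishing can hold. It would give $M_Y(\cE(-D),Y)\simeq M_Y(\cE,Y)$. The proposition, which is true, would then force $M_Y(\cE(-D))\to M_Y(\cE)$ to be an isomorphism. But on $H^0(\cO)$ this map is $R[x,y']\to R[x,y]$, $y'\mapsto xy$, which is not surjective. Your argument never examines this map of total spaces. What actually has to be shown is that the induced map from $\cofib(M_Y(U_1\cap U_2)\to M_Y(U_2))$ to $\cofib(M_Y(\cE(-D))\to M_Y(\cE))$ is an isomorphism between two \emph{nonzero} objects.

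The fallback does not close the gap either. An endomorphism of $1_Y(1)[2]$ that becomes invertible over $x\neq 0$ need not be invertible over $Y$. Nothing in the axioms on $\sT$ makes restriction to $Y-\partial Y$ detect isomorphisms.

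The missing idea is to make the extra chart appear in a modification of the total space as well. Let $V\to\cE$ be the admissible blow-up along $D$, embedded in $\cE$ by the zero section. Then $V=\cE(-D)\cup U$, where the extra charts $U$ (your $U_2$ when $r=1$) miss the zero section $Y\subset\cE(-D)$. Moreover, $(V,Y)=(\cE(-D),Y)\cup U\to(\cE,Y)$ is a dividing cover; for $r=1$ it is exactly your $U_1\cup U_2$.

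Since $U$ sits in $V$ and in $(V,Y)$ with the same log structure, Zariski descent cancels it in the relative object:
$M_Y(V/(V,Y))\simeq M_Y(\cE(-D)/(\cE(-D),Y))$.
This is the correct form of your cancellation. Div-invariance gives $M_Y(V,Y)\simeq M_Y(\cE,Y)$. What remains, and this is the real content, is $M_Y(V)\simeq M_Y(\cE)$.

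The paper proves this by adding charts $W$ at infinity so that $W\cup\cE\simeq Y\times(\P^r,\P^{r-1})$. At the same time, $W\cup V$ is a dividing cover of another copy $W'\cup\cE(-D)\simeq Y\times(\P^r,\P^{r-1})$. Both have motive $1_Y$ by $(\P^r,\P^{r-1})$-invariance, and Zariski descent identifies $M_Y(W\cup V/V)$ with $M_Y(W\cup\cE/\cE)$. This is where $\square$-invariance genuinely enters.

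Note also that this argument treats all ranks at once, which you need. Your reduction to rank one via $\Th(\cE\oplus\cF)\simeq\Th(\cE)\otimes\Th(\cF)$ is not available here, for two reasons:
\begin{itemize}
\item In the paper that multiplicativity (Proposition \ref{Gysin.9}) is deduced from Proposition \ref{Gysin.28}, which uses the very statement you are proving.
\item The K\"unneth identification of the pairs is not formal, since $(\cE\oplus\cF,X)$ is obtained by blowing up a locus of codimension $\mathrm{rk}\,\cE+\mathrm{rk}\,\cF$.
\end{itemize}
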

\begin{proof}
Using Proposition \ref{Gysin.26},
we reduce to the case where $S\in \Sch$ and $D,X\in \Sm_S$.
Then $D$ is a smooth divisor on $X$.

We can work Zariski locally on $X$.
Hence we may assume that $X=\Spec(A)$, $D=\mathrm{div}(a)$ with $a\in A$, and $\cE=\cO^n$ for some integer $n\geq 0$.
Then we have
\[
\cE=\Spec(A[t_1,\ldots,t_n],\langle a\rangle),
\;
\cE(-D)=\Spec(A[t_1/a,\ldots,t_n/a],\langle a \rangle).
\]
We set $Y:=(X,D)$ for simplicity of notation.

Step 1.
\emph{Construction of $V$.}
Let $V\to \cE$ be the admissible blow-up along $D$.
Consider
\[
U_i:=\Spec(A[t_i,a/t_i,t_1/t_i,\ldots,t_n/t_i],\langle t_i,a/t_i\rangle)
\]
and $U:=U_1\cup \cdots \cup U_n$.
Then $V=\cE(-D)\cup U$.
Regard $Y$ as the zero sections of $\cE$ and $\cE(-D)$,
and regard $Y$ as a strict closed subscheme of $V$ via the open immersion $\cE(-D)\to V$.
Observe that $Y$ does not intersect with $U$.
So far,
we have an induced commutative diagram
\[
\begin{tikzcd}[row sep=small]
&
Y\ar[d]
\\
U\cap \cE(-D)\ar[d]\ar[r]&
\cE(-D)\ar[d]
\\
U\ar[r]&
V\ar[r]&
\cE.
\end{tikzcd}
\]

Step 2. \emph{Using Zariski descent}.
We have
\begin{gather*}
M_Y(V/U)\simeq M_Y(\cE(-D)/U\cap \cE(-D)),
\\
M_Y((V,Y)/U)\simeq M_Y((\cE(-D),Y)/U\cap \cE(-D))
\end{gather*}
by Zariski descent.
Hence
\begin{align*}
& M_Y(V/(V,Y))
\simeq 
M_Y\left(\frac{V/U}{(V,Y)/U}\right)
\\
\simeq &
M_Y\left(\frac{\cE(-D)/U\cap \cE(-D)}{(\cE(-D),Y)/U\cap \cE(-D)}\right)
\simeq
M_Y(\cE(-D)/(\cE(-D),Y)).
\end{align*}
It remains to show $M_Y(V)\simeq M_Y(\cE)$ and $M_Y(V,Y)\simeq M_Y(\cE,Y)$.

Step 3. \emph{Proof of $M_Y(V,Y)\simeq M_Y(\cE,X)$.}
Consider
\begin{gather*}
U_i':=\Spec(A[t_i/a,t_1/t_i,\ldots,t_n/t_i],\langle t_i/a,a\rangle),
\\
U_i'':=
\Spec(A[t_i,t_1/t_i,\ldots,t_n/t_i],\langle t_i,a\rangle).
\end{gather*}
Then $(V,Y)=U\cup U_1'\cup \cdots \cup U_n'$, $(\cE,Y)=U_1''\cup \cdots \cup U_n''$, and each $U_i\cup U_i'\to U_i''$ is a dividing cover.
Hence $(V,Y)\to (\cE,Y)$ is a dividing cover,
so
\[
M_Y(V,Y)\simeq M_Y(\cE,Y).
\]

Step 4. \emph{Proof of $M_Y(V)\simeq M_Y(\cE)$.}
Consider
\begin{gather*}
W_i:=(A[1/t_i,t_1/t_i,\ldots,t_n/t_i],\langle a,1/t_i\rangle),
\\
W_i':=(A[a/t_i,t_1/t_i,\ldots,t_n/t_i],\langle a,a/t_i\rangle),
\\
W:=W_1\cup \cdots \cup W_n,
\;
W':=W_1'\cup \cdots \cup W_n'.
\end{gather*}
Then
\[
W\cup \cE \simeq Y\times (\P^n,\P^{n-1})\simeq W'\cup \cE(-D),
\]
which implies
\[
M_Y(W\cup \cE)\simeq 1_Y\simeq M_Y(W'\cup \cE(-D)).
\]
Since each $W_i\cup U_i\to W_i'$ is a dividing cover,
$W\cup U\to W'$ is a dividing cover.
Hence
\[
M_Y(W\cup V)
=
M_Y(W\cup U \cup \cE(-D))
\simeq
M_Y(W'\cup \cE(-D))
\simeq
1_Y.
\]
By Zariski descent,
we have
\[
M_Y(W\cup V/V)
\simeq
M_Y(W/W\cap V)
=
M_Y(W/W\cap \cE)
\simeq
M_Y(W\cup \cE/\cE),
\]
where the middle equality comes from $W\cap V=W\cap \cE$.
Since $M_Y(W\cup V)$ and $M_Y(W\cup \cE)$ are contractible,
we have $M_Y(V)\simeq M_Y(\cE)$.
\end{proof}

\section{Log Gysin sequence}
We keep working with $\sT$ in \S \ref{sat}.
Throughout this section,
let $S\in \lSch$,
and let $Z\to X$ be a strict closed immersion in $\lSm_S^\sat$.
The purpose of this section is to upgrade the log Gysin sequence in \cite[Theorem 7.5.4]{logDM} such that we allow a log structure on the base $S$.

We set $\Bl_Z X:=\Bl_{\ul{Z}}\ul{X}\times_{\ul{X}}X$.
The \emph{deformation space} is defined to be
\[
\rD_Z X
:=
\Bl_{Z\times 0}(X\times \square)-\Bl_{Z\times 0}(X\times 0).
\]
The \emph{normal bundle} is defined to be
\[
\rN_Z X
:=
X\times_{\ul{X}} \rN_{\ul{Z}}\ul{X},
\]
where $\rN_{\ul{Z}}\ul{X}$ denotes the usual normal bundle for schemes.

\begin{thm}
\label{Gysin.25}
Let $S\in \lSch$,
and let $Z\to X$ be a strict closed immersion in $\lSm_S^\sat$.
Then the induced morphisms
\begin{equation}
\label{Gysin.0.2}
M_S(X/(X,Z))
\to
M_S(\rD_Z X/(\rD_Z X,Z\times \square))
\leftarrow
M_S(\rN_Z X/(\rN_Z X,Z))
\end{equation}
are isomorphisms.
\end{thm}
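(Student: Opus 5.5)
Both arrows in \eqref{Gysin.0.2} are obtained from the closed inclusions $X\times 1\to \rD_Z X$ and $\rN_Z X\to \rD_Z X$ (the fibres over $1$ and $0$ of $\square$). I will reduce to the case of a trivial log structure on the base with strict smooth $X$ and $Z$, and there invoke the log Gysin sequence \cite[Theorem 7.5.4]{logDM}.

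\textbf{Step 1: localization.} The two sides of each arrow satisfy Zariski descent in $X$: both are cofibres of maps of motives, and $\rD_Z X$, $\rN_Z X$, $(X,Z)$ are compatible with restriction to Zariski opens of $X$. A Mayer--Vietoris argument, inducting on the number of opens in a cover, therefore lets us work Zariski locally on $X$. By Proposition \ref{Gysin.26}, applied to $Z\to Z\to X$, we may then assume that $X\to S$ factors as $X\to S'\to S$ with $S'\in \lSm_S^\sat$ and $X,Z\to S'$ strict smooth. All the constructions $\rD_Z X$, $\rN_Z X$, $(X,Z)$, $(\rD_ZX,Z\times\square)$ are formed over $S'$. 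Since $M_S=p_\sharp M_{S'}$ for $p\colon S'\to S$, it suffices to prove the claim in $\sT(S')$.

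\textbf{Step 2: removing the log structure of the base.} Now $X,Z$ are strict smooth over $S'$, so everything is pulled back from $\ul{S'}$. Let $q\colon S'\to\ul{S'}$ be the morphism removing the log structure. By the base change formula, $q^*$ applied to the corresponding motives over $\ul{S'}$ gives the motives over $S'$. We may therefore assume that $S\in\Sch$ and $Z,X\in\Sm_S$. This requires the objects $(X,Z)$, $\rD_ZX$, etc.\ over $S'$ to be the base changes of the corresponding objects over $\ul{S'}$. That holds because blow-ups along centres smooth over the base commute with flat base change, and fs fibre products with strict morphisms are computed on underlying schemes.

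\textbf{Step 3: the scheme case.} With $S\in\Sch$ and $X,Z\in\Sm_S$, all objects lie in $\SmlSm_S$. The statement is then the log Gysin isomorphism of \cite[Theorem 7.5.4]{logDM}, whose proof transfers to $\sT$ since it only uses $\square$-invariance, div-invariance and Nisnevich descent. If a self-contained argument is needed, I would proceed as follows. Work Nisnevich locally and use an étale map $X\to \A^n_S$ with $Z$ the preimage of a linear subspace. This reduces to $X=Z\times\A^c$, where $\rD_ZX\simeq \rN_ZX\times\square$ compatibly. The two arrows then become $\square$-invariance, via the projection $\square\to\mathrm{pt}$ and the sections at $0$ and $1$.

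\textbf{Main obstacle.} The hard step is the reduction to the linear case. One must show that the motives are unchanged along an étale map $X\to X'$ which is an isomorphism on $Z$. This is strict Nisnevich excision, applied to the complements of $Z$ and of its blow-up boundary. The difficulty is that $(X,Z)$ is not an open subscheme of $X$, so the excision square has to be set up on the blow-up $\Bl_Z X$ with its boundary. One then checks that the relevant squares are strict Nisnevich distinguished after adding boundary.
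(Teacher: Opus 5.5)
Your Steps 1--3 follow the paper's proof exactly. You localize Zariski on $X$, use Proposition \ref{Gysin.26} to pass to $S'$ over which $X$ and $Z$ are strict smooth, and pull back along $S'\to\ul{S'}$ to reach $S\in\Sch$ with $X,Z\in\Sm_S$. You then quote \cite[Theorem 7.5.4]{logDM}. As a proof that rests on that citation, this is correct.

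Your optional self-contained sketch of the scheme case, however, contains a false step. It sits exactly where the $\square$-version differs from the $\A^1$-version.

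\textbf{The false step.} For $X=Z\times\A^c$ you do \emph{not} have $\rD_ZX\simeq\rN_ZX\times\square$.
\begin{itemize}
\item Over $\A^1\subset\square$, the deformation space is $Z\times\A^c\times\A^1$ with coordinates $(x_i/t,t)$.
\item Near $t=\infty$, it is $X\times(\P^1-0)$ with coordinates $(x_i,s)$, where $s=1/t$.
\item The two charts are glued via $x_i/t=s\,x_i$.
\end{itemize}
So $\ul{\rD_ZX}$ is the total space of $\rN_ZX\boxtimes\cO_{\P^1}(-1)$ over $Z\times\P^1$. In particular, $Z\times\square$ has normal bundle twisted by $\cO_{\P^1}(-1)$. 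Hence no isomorphism with $\rN_ZX\times\square$ respecting $Z\times\square$ exists. The naive $\A^1$-argument, where the linear case is literally a product, does not transfer.

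\textbf{The correct argument.}
\begin{enumerate}
\item Via $(x,s)\mapsto(sx,s)$, view $\rD_ZX$ as the $s$-chart of the admissible blow-up $V'$ of $\rN_ZX\times\square$ along the zero section over $\infty$.
\item The other charts of $V'$ miss the strict transform $W'$ of $Z\times\square$. So Zariski descent gives $M_S(\rD_ZX/(\rD_ZX,Z\times\square))\simeq M_S(V'/(V',W'))$.
\item Both $V'\to\rN_ZX\times\square$ and $(V',W')\to(\rN_ZX\times\square,Z\times\square)$ are admissible blow-ups along smooth centres. By \cite[Theorem 7.2.10]{logDM}, this identifies the relative motive with the corresponding one for $\rN_ZX\times\square$, compatibly with the sections at $0$ and $1$.
\item Only then does $\square$-invariance conclude.
\end{enumerate}
This is precisely what the paper carries out for the double deformation space in Proposition \ref{Gysin.7}, following \cite[Theorem 7.5.4]{logDM} and \cite[Lemma 3.2.18]{logSH}.

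Your ``main obstacle'' paragraph therefore misplaces the difficulty. The Nisnevich excision reducing to the linear case is routine, as in the proofs of Proposition \ref{blow.4} and Theorem \ref{local.1}. The linear case itself is what needs the blow-up and Zariski-descent argument.
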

\begin{proof}
Let us use the argument in \cite[Theorem 3.4.1]{logGysin} as follows.
We can work Zariski locally on $X$,
so we may assume the existence of $S'$ in Proposition \ref{Gysin.26}.
Replace $S$ by $S'$ to reduce to the case where $Z$ and $X$ are strict smooth over $S$.
By considering the functor $p^*$ for the morphism $p\colon S\to \ul{S}$ removing the log structure,
We further reduce to the case of $S\in \Sch$.
Then \cite[Theorem 7.5.4]{logDM} is precisely what we need.
\end{proof}

Such a technique of upgrading a statement with a base scheme to a statement with a base fs log scheme will be repeatedly used later.

As a consequence,
we have a natural cofiber sequence
\begin{equation}
\label{Gysin.0.3}
M_S(X,Z)
\to
M_S(X)
\xrightarrow{\Gys}
\Th_Z(\rN_Z X),
\end{equation}
which is called the \emph{log Gysin sequence}.
This will play a crucial role throughout this paper.
The morphism $\Gys\colon M_S(X)\to \Th_Z(\rN_Z X)$ is called the \emph{log Gysin morphism}.

The natural transformation $(fu)_\sharp u^* \xrightarrow{\ad'} f_\sharp$ below is not obtained by the counit of an adjoint pair $(u_\sharp,u^*)$ since $u_\sharp$ does not exist, but we use $\ad'$ for convenience.
We will use the same convention for many other places later.

We can also upgrade the log Gysin sequence to natural transformations as follows.

\begin{thm}
\label{setup.1}
Let $u\colon (X,Z)\to X$ be the canonical morphism.
Then there exists a cofiber sequence of natural transformations
\[
(fu)_\sharp u^*
\xrightarrow{\ad'}
f_\sharp
\xrightarrow{\Gys}
(fi)_\sharp \Sigma^{\rN_i} i^*,
\]
which we call the \emph{log Gysin sequence}.
Here, $\rN_i:=\rN_Z X$ for abbreviation.
\end{thm}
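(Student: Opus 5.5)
The plan is to build the natural transformations objectwise from the Gysin isomorphisms of Theorem \ref{Gysin.25}, applied to the pair $Z\times_X Y\to Y$ for every $Y\in \lSm_X^\sat$. Here $f\colon X\to S$ is the structure morphism and $i\colon Z\to X$ the closed immersion. By the generation axiom and colimit preservation, it suffices to produce the sequence naturally on the objects $M_X(Y)(n)[m]$ with $g\colon Y\to X$ in $\lSm_X^\sat$. Since all three functors commute with Tate twists and shifts (projection formula), we may take $n=m=0$.

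\emph{Evaluation on generators.} We compute each term on $M_X(Y)=g_\sharp 1_Y$ using the $\lSm^\sat$ base change formula.
\begin{itemize}
\item For the first term, $u^*g_\sharp 1_Y\simeq M_{(X,Z)}(Y\times_X (X,Z))$. Moreover $Y\times_X(X,Z)\simeq (Y,Z_Y)$ with $Z_Y:=Z\times_X Y$, since blow-ups commute with flat base change and the saturated log smooth $g$ behaves accordingly; one can check this locally via Proposition \ref{Gysin.26}. Hence $(fu)_\sharp u^* M_X(Y)\simeq M_S(Y,Z_Y)$.
\item Likewise $f_\sharp M_X(Y)=M_S(Y)$.
\item For the third term, $i^*M_X(Y)\simeq M_Z(Z_Y)$. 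Using $\rN_{Z_Y}Y\simeq Z_Y\times_Z \rN_i$ and the projection formula,
\[
(fi)_\sharp \Sigma^{\rN_i} M_Z(Z_Y)\simeq M_S(\rN_{Z_Y}Y/(\rN_{Z_Y}Y,Z_Y)) = \Th_{Z_Y}(\rN_{Z_Y}Y).
\]
\end{itemize}
Therefore on generators the desired sequence is exactly the log Gysin sequence \eqref{Gysin.0.3} for $Z_Y\to Y$ over $S$.

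\emph{Naturality.} The real content is making this functorial in $Y$, and in fact a natural transformation of functors on all of $\sT(X)$. The first map $\ad'$ is already natural: it is induced by $u$, via $(fu)_\sharp u^*\simeq f_\sharp u_\sharp' u^*$ on the image, or concretely by the morphism $(Y,Z_Y)\to Y$. For the second map, I would avoid choosing Gysin maps by hand. Instead I would define the cofiber $C:=\cofib((fu)_\sharp u^*\to f_\sharp)$ as a functor and construct an equivalence $C\simeq (fi)_\sharp\Sigma^{\rN_i}i^*$ through the deformation space.

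Write $D:=\rD_Z X$ and $q\colon D\to X$ for the projection, which lies in $\lSm^\sat$. Consider the three functors
\[
F_1:=\cofib(f_\sharp u_\sharp u^*\to f_\sharp),\quad F_2:=\cofib((fq)_\sharp (\text{boundary part})\to (fq)_\sharp q^*),\quad F_3:=\cofib(\ldots)\text{ for }\rN_i.
\]
Each is expressed through $\sharp$-pushforward and $*$-pullback along fixed morphisms. The maps of \eqref{Gysin.0.2}, namely the inclusions $X\times 1\to D\leftarrow \rN_ZX$, then induce natural transformations $F_1\to F_2\leftarrow F_3$ through unit and counit maps. These are natural by construction. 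Evaluated on $M_X(Y)$, they recover \eqref{Gysin.0.2} for $Z_Y\to Y$, because deformation spaces and normal bundles commute with base change along $g$. By Theorem \ref{Gysin.25} they are therefore isomorphisms on generators, and hence everywhere. Finally, $F_3\simeq (fi)_\sharp\Sigma^{\rN_i}i^*$ by the projection formula for the vector bundle $\rN_i\to Z$ together with the definition of $\Th$. This yields $\Gys$ and the cofiber sequence.

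\emph{Main obstacle.} The hardest step is the base change compatibilities: that $(X,Z)$, $\rD_ZX$ and $\rN_ZX$ are compatible with pullback along an arbitrary $Y\in\lSm_X^\sat$. Also delicate is defining $u_\sharp$-type expressions, since $u$ need not be in the right class. For the latter I would first try to rewrite everything in terms of $\sharp$ along $fu$, $fq$, and so on, which do lie in $\lSm^\sat$. For the base change statements I would reduce Zariski locally via Proposition \ref{Gysin.26} to strict smooth situations, where they are classical.
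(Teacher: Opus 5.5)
Your proposal is correct and follows essentially the paper's route. The paper also compares the boundary maps for $X$, $\rD_Z X$ and $\rN_Z X$ along $X\xrightarrow{i_1}\rD_Z X\xleftarrow{i_0}\rN_Z X$, writing everything through pullback along $g\colon \rD_Z X\to X$ and using $gi_1=\id$; it checks the comparison squares are cartesian on the generators $M_X(V)$ via Theorem \ref{Gysin.25} and the identifications $V\times_X\rD_Z X\simeq\rD_W V$, $V\times_X\rN_Z X\simeq\rN_W V$, and then identifies the $\rN$-term with $(fi)_\sharp\Sigma^{\rN_i}i^*$ by the projection formula.

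One small correction: the projection $q\colon\rD_Z X\to X$ is not itself in $\lSm^\sat$, since its fibers over points of $Z$ are singular; only the composite $fq$ is. This does not affect your argument, because you only ever use $(fq)_\sharp q^*$ and the maps $(fqi_1)_\sharp i_1^*\to(fq)_\sharp$ and $(fqi_0)_\sharp i_0^*\to(fq)_\sharp$.
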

\begin{proof}
Consider the induced commutative diagram
\[
\begin{tikzcd}
(X,Z)\ar[d,"u"']\ar[r]&
(\rD_Z X,X\times \square)\ar[r,leftarrow]\ar[d,"u'"]&
(\rN_Z X,Z)\ar[d,"u''"]
\\
X\ar[r,"i_1"]&
\rD_Z X\ar[r,leftarrow,"i_0"]\ar[d,"g"]&
\rN_Z X
\\
&
X.
\end{tikzcd}
\]
We claim that the squares in the induced commutative diagram
\begin{equation}
\label{setup.1.1}
\begin{tikzcd}
(fgi_1u)_\sharp u^*i_1^*g^*\ar[d,"\ad'"']\ar[r,"\ad'"]&
(fgu')_\sharp u'^*g^*\ar[d,"\ad'"]\ar[r,leftarrow,"\ad'"]&
(fgi_0u'')_\sharp u''^*i_0^*g^*\ar[d,"\ad'"]
\\
(fgi_1)_\sharp i_1^*g^*\ar[r,"\ad'"]&
(fg)_\sharp g^*\ar[r,leftarrow,"\ad'"]&
(fgi_0)_\sharp i_0^*g^*
\end{tikzcd}
\end{equation}
are cartesian.
For this,
it suffices to show the same after evaluating at $M_X(V)$ for every $V\in \lSm_X^\sat$,
i.e.,
the squares in
\[
\begin{tikzcd}
M_S(V\times_X (X,Z))\ar[d]\ar[r]&
M_S(V\times_X (\rD_Z X,Z\times \square))\ar[r,leftarrow]\ar[d]&
M_S(V\times_X (\rN_Z X,Z))\ar[d]
\\
M_S(V)\ar[r]&
M_S(V\times_X \rD_Z X)\ar[r,leftarrow]&
M_S(V\times_X \rN_Z X)
\end{tikzcd}
\]
are cartesian.
This follows from Theorem \ref{Gysin.25} since $\rD_W V\simeq V\times_X \rD_Z X$ and $\rN_W V\simeq V\times_X \rN_Z X$ with $W:=V\times_X Z$.

Since $gi_1=\id$,
we obtain a natural isomorphism
\[
\cofib((fu)_\sharp u^*\xrightarrow{\ad'} f_\sharp)
\simeq
\cofib((fgi_0u'')_\sharp u''^*i_0^* g^* \xrightarrow{\ad'} (fgi_0)_\sharp i_0^* g^*).
\]
To conclude,
observe that the right-hand side is isomorphic to $(fi)_\sharp \Sigma^{\rN_i}i^*$ by the $\lSm^\sat$ projection formula.
\end{proof}

\begin{const}
\label{Gysin.10}
Under the above notation,
assume that $\cE$ is a vector bundle over $X$ such that $i^*\cE\simeq \rN_i$.
Theorem \ref{setup.1} yields a cofiber sequence
\[
(fu)_\sharp u^* \Sigma^{-\cE}
\to
f_\sharp \Sigma^{-\cE}
\xrightarrow{\Gys}
(fi)_\sharp i^*.
\]
In particular,
we obtain a morphism
\[
\Th_X^{-1}(\cE)
\xrightarrow{\Gys}
M_S(Z).
\]
\end{const}

\section{Log Gysin morphisms are compatible with compositions}
\label{Gysin}

We keep working with $\sT$ in \S \ref{sat}.
D\'eglise \cite[Theorem 4.32]{MR2466188} proved that Gysin morphisms are compatible with compositions in the $\A^1$-invariant motivic setting.
In Theorem \ref{Gysin.11},
we argue similarly to prove that the same conclusion holds without the $\A^1$-invariance.

Throughout this section,
let $S\in \lSch$,
let $Z\xrightarrow{j} Y\xrightarrow{i} X$ be closed immersions in $\lSm_S^\sat$,
and let $f\colon X\to S$ be the structural morphism.

\begin{prop}
\label{Gysin.27}
The canonical morphism
\[
M_S((X,Z),(Y,Z)) \to
M_S(X,Y)
\]
is an isomorphism.
\end{prop}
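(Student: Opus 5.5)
The plan is to compare three log Gysin sequences from \eqref{Gysin.0.3} and reduce the claim to an identification of Thom spaces, where Proposition \ref{blow.5} absorbs the twist by the exceptional divisor. Write $Y':=(Y,Z)$. Its underlying scheme is the strict transform $\Bl_{\ul Z}\ul Y$, viewed as a strict closed subscheme of $(X,Z)$, so the source of the morphism is $M_S((X,Z),Y')$. Because everything is Zariski local on $X$, I would first use Proposition \ref{Gysin.26} (for $Z\to Y\to X$) and then pull back along $S\to\ul S$. This reduces to the case $S\in\Sch$ with $X,Y,Z\in \Sm_S$, and lets us assume the normal bundles are trivial whenever convenient.

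First, the log Gysin sequence for $Y\to X$ is
\[
M_S(X,Y)\to M_S(X)\to \Th_Y(\rN_Y X).
\]
Second, combining the sequence for $Z\to X$ with the sequence for $Y'\to (X,Z)$, the latter being
\[
M_S((X,Z),Y')\to M_S(X,Z)\to \Th_{Y'}(\rN_{Y'}(X,Z)),
\]
shows that $\cofib(M_S((X,Z),Y')\to M_S(X))$ is an extension of $\Th_Z(\rN_Z X)$ by $\Th_{Y'}(\rN_{Y'}(X,Z))$. It therefore suffices to show that the induced map of cofibers
\[
\cofib\bigl(M_S((X,Z),Y')\to M_S(X)\bigr)\to \Th_Y(\rN_Y X)
\]
is an isomorphism, which I would prove by exhibiting $\Th_Y(\rN_Y X)$ as the same kind of extension. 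For that, apply Theorem \ref{setup.1} to $Z\to Y$, with $f$ replaced by $Y\to S$, to the object $\Th_Y(\rN_Y X)$ of $\sT(Y)$. This gives a cofiber sequence
\[
M_S(Y',\rN_Y X|_{Y'})\to \Th_Y(\rN_Y X)\to \Th_Z(\rN_Z Y\oplus \rN_Y X|_Z)\simeq \Th_Z(\rN_Z X),
\]
where the first term means $(Y'\to S)_\sharp$ of the Thom object of $\rN_Y X|_{Y'}$ in $\sT(Y')$. The last isomorphism uses the exact sequence $0\to \rN_ZY\to \rN_ZX\to \rN_YX|_Z\to 0$, which splits Zariski locally; for a canonical identification, one should use that Thom spaces send short exact sequences to tensor products.

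Next I would identify the two left terms. A local computation in coordinates ($Y=\{x_1=\dots=x_r=0\}$ and $Z=\{x_1=\dots=x_r=y_1=\dots=y_m=0\}$) gives
\[
\rN_{Y'}(X,Z)\simeq (\rN_Y X|_{Y'})(-E),
\]
where $E$ is the exceptional divisor of $\Bl_{\ul Z}\ul Y$, i.e. the boundary of $Y'$. Proposition \ref{blow.5}, applied with $X$ replaced by $Y$, with $D=E$, and with $\cE=\rN_YX|_{Y'}$, then shows
\[
\Th_{Y'}(\rN_{Y'}(X,Z))\simeq \Th_{Y'}(\rN_Y X|_{Y'}).
\]
This matches the first terms of the two extensions, while the third terms are both $\Th_Z(\rN_Z X)$.

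The main obstacle is compatibility: I need a genuine map between the two extensions, i.e.\ a morphism of cofiber sequences, and not merely abstract isomorphisms of the outer terms. My approach would be to realize all three Gysin maps through the deformation spaces of Theorem \ref{Gysin.25}. The canonical maps $\rD_{Y'}(X,Z)\to \rD_Y X$ and $\rN_{Y'}(X,Z)\to \rN_Y X$ induce the comparison of Thom objects, and this comparison is exactly the map $\cE(-D)\to\cE$ of Proposition \ref{blow.5}. It remains to check that the resulting square involving $\Th_Z(\rN_Z X)$ commutes. I would reduce this, by Zariski locality, to the case where all bundles are trivial and the map is the explicit one of Proposition \ref{blow.5}, and then check the square there. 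Once the morphism of cofiber sequences is in place, the five lemma gives the claim.
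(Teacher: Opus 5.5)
You have a genuine gap, and it sits in the step you defer. Your first-square analysis is fine: it is essentially Proposition \ref{Gysin.28}, which the paper proves right after this statement using Proposition \ref{blow.5} and the same conormal computation. The trouble is the third term. Theorem \ref{setup.1} applied to $\Th_Y(\rN_YX)$ only gives $(Z\to S)_\sharp(\Th_Z(\rN_ZY)\otimes_Z\Th_Z(\rN_YX|_Z))$ there. To compare it with $\Th_Z(\rN_ZX)$ you need multiplicativity of Thom spaces (Proposition \ref{Gysin.9}). Commutativity of the third square is exactly the compatibility of log Gysin morphisms with composition (Theorem \ref{Gysin.11}).

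In this paper both results depend on the statement you are proving. In the proof of Proposition \ref{Gysin.9}, the triangle $M_X(\cE,\cE')\to M_X(\cE,X)\to M_X(\cE)$ is built from Propositions \ref{Gysin.27} and \ref{Gysin.28} applied to $X\subset\cE'\subset\cE$, since there is in general no morphism $(\cE,\cE')\to(\cE,X)$ of log schemes. Theorem \ref{Gysin.11} is stated in terms of \eqref{Gysin.9.1} and \eqref{Gysin.9.2}. Via the double deformation of Proposition \ref{Gysin.7}, it is reduced to precisely such a vector bundle triple. So your argument is circular unless you prove both results independently.

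Your suggested repair does not provide this. A homotopy witnessing commutativity of a square in $\sT(S)$ cannot be glued from Zariski-local homotopies without coherence data. Even with trivial bundles, you would still have to identify a specific induced map between two copies of $\unit_Z(c)[2c]$, and that computation is the real content of Theorem \ref{Gysin.11}.

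The missing idea is that no Gysin sequences are needed. After your correct reduction to $S\in\Sch$ and $X,Y,Z\in\Sm_S$, one has $\ul{((X,Z),(Y,Z))}=\Bl_{\Bl_ZY}(\Bl_ZX)\simeq\Bl_{Z\times_X\Bl_YX}(\Bl_YX)$, while $\ul{(X,Y)}=\Bl_YX$. Both log schemes have interior $X-Y$. Hence the canonical morphism $((X,Z),(Y,Z))\to(X,Y)$ is an admissible blow-up along the smooth center $Z\times_X\Bl_YX$, which lies in the boundary. Invariance under admissible blow-ups along smooth centers \cite[Theorem 7.2.10]{logDM} then gives the isomorphism directly. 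This is the paper's proof, and it is what lets Propositions \ref{Gysin.27} and \ref{Gysin.28} serve as inputs for multiplicativity and the composition formula, not the other way round.
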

\begin{proof}
As in Theorem \ref{Gysin.25},
we reduce to the case where $S\in \Sch$ and $X,Y,Z\in \Sm_S$.
We have
\begin{gather*}
\ul{(X,Y)}\simeq \Bl_Y X,
\;
(X,Y)-\partial (X,Y)\simeq X-Y,
\\
\ul{((X,Z),(Y,Z))}
=
\Bl_{\Bl_Z Y} (\Bl_Z X)
\simeq
\Bl_{Z\times_X \Bl_Y X} (\Bl_Y X),
\\
((X,Z),(Y,Z))-\partial ((X,Z),(Y,Z))
\simeq X-Y.
\end{gather*}
This shows that $((X,Z),(Y,Z))\to (X,Y)$ is an admissible blow-up along a smooth center,
so \cite[Theorem 7.2.10]{logDM} finishes the proof.
\end{proof}

\begin{prop}
\label{Gysin.28}
We have a natural cofiber sequence
\[
M_S((X,Z),(Y,Z)) \to M_S(X,Z) \to \Th_{(Y,Z)}((Y,Z)\times_Y \rN_Y X).
\]
\end{prop}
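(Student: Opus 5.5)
The plan is to apply the log Gysin sequence \eqref{Gysin.0.3}, i.e.\ Theorem \ref{Gysin.25}, to a suitable strict closed immersion over $S$. The natural candidate is $(Y,Z)\to (X,Z)$. Both $(X,Z)$ and $(Y,Z)$ lie in $\lSm_S^\sat$, as noted after Proposition \ref{Gysin.26}. The morphism $(Y,Z)\to (X,Z)$ is the strict transform of $Y$ in $\Bl_Z X$ with the induced log structure.

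\textbf{Step 1.} Check that $(Y,Z)\to (X,Z)$ is a strict closed immersion. Working Zariski locally on $X$, Proposition \ref{Gysin.26} lets us assume $X,Y,Z$ are strict smooth over some $S'\in\lSm_S^\sat$. After pulling back along $S'\to\ul{S'}$ as in the proof of Theorem \ref{Gysin.25}, we may take $S\in\Sch$ and $X,Y,Z\in\Sm_S$.

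In that situation $\ul{(Y,Z)}=\Bl_Z Y$ is the strict transform of $Y$ in $\Bl_Z X$. It meets the exceptional divisor $E_X$ transversally, with $E_X\cap \Bl_Z Y=E_Y$. Hence the Deligne--Faltings log structure of $E_X$ restricts to that of $E_Y$, and the immersion is strict.

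\textbf{Step 2.} Apply \eqref{Gysin.0.3} to $(Y,Z)\to (X,Z)$. This gives a cofiber sequence
\[
M_S(((X,Z),(Y,Z)))\to M_S(X,Z)\to \Th_{(Y,Z)}(\rN_{(Y,Z)}(X,Z)).
\]

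\textbf{Step 3.} Identify the normal bundle:
\[
\rN_{(Y,Z)}(X,Z)\simeq (Y,Z)\times_Y \rN_Y X.
\]
By definition $\rN_{(Y,Z)}(X,Z)=(Y,Z)\times_{\ul{(Y,Z)}}\rN_{\Bl_Z Y}\Bl_Z X$. So it suffices to show that the normal bundle of the strict transform $\Bl_Z Y\subset \Bl_Z X$ is the pullback of $\rN_Y X$ along $\Bl_Z Y\to Y$.

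This is the step I expect to be the main obstacle. It is a standard fact when $Z\subset Y\subset X$ are smooth: locally choose coordinates $x_1,\dots,x_n$ with $Y=\{x_1=\dots=x_r=0\}$ and $Z=\{x_1=\dots=x_s=0\}$, where $r\le s$. On the chart where $x_j$ is the generator, with $j>r$, the strict transform is cut out by $x_1/x_j,\dots,x_r/x_j$.

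If one only compares ideals, the conormal sheaf is generated by the classes of $x_i/x_j$, which is $\rN_Y X$ twisted by $\cO(E)$, not the pullback itself. So the identification may genuinely require a twist: $\rN_{\Bl_ZY}\Bl_ZX\simeq (\rN_YX)|_{\Bl_ZY}\otimes\cO(-E)$. Proposition \ref{blow.5} is exactly designed to remove such a twist. With $D=E_Y$ on $(Y,Z)$ and $\cE=(Y,Z)\times_Y\rN_YX$, the proposition gives $\Th_{(Y,Z)}(\cE(-D))\simeq\Th_{(Y,Z)}(\cE)$.

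The identification must be canonical so that the sequence is natural. To ensure this, I would construct the map $\rN_{\Bl_ZY}\Bl_ZX\to (\rN_YX)|_{\Bl_ZY}$ via the conormal map induced by $\Bl_ZX\to X$. This map is injective with image $\cE(-D)$, which follows from the local computation.

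With this, Step 2 and the identification in Step 3 combine to give the claimed cofiber sequence. Naturality comes from that of Theorem \ref{Gysin.25} together with the canonical inclusion $\cE(-D)\to\cE$.
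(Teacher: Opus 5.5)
Your proposal is correct and is essentially the paper's proof. You apply the log Gysin sequence \eqref{Gysin.0.3} to $(Y,Z)\to(X,Z)$, identify $\rN_{\Bl_Z Y}(\Bl_Z X)$ with the pullback of $\rN_Y X$ twisted by the exceptional divisor via the conormal map of $\Bl_Z X\to X$, and then remove the twist with Proposition \ref{blow.5}. Your explicit strictness check is a step the paper leaves implicit, and your sign $\cO(-E)$ is the right one (the paper writes $(E)$), which is harmless because Proposition \ref{blow.5} absorbs a twist in either direction.
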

\begin{proof}
Using \eqref{Gysin.0.3},
it suffices to construct a natural isomorphism
\[
\Th_{(Y,Z)}((Y,Z)\times_Y \rN_Y X)
\simeq
\Th_{(Y,Z)}(\rN_{(Y,Z)}(X,Z)).
\]
As in Theorem \ref{Gysin.25},
we reduce to the case where $S\in \Sch$ and $X,Y,Z\in \Sm_S$.

The total transform $Y\times_X \Bl_Z X$ is the union of the strict transform $\Bl_Y X$ and the exceptional divisor $E$.
Hence we have
\[
p^*(\cI/\cI^2)\simeq (\cJ/\cJ^2)(E),
\]
where $p\colon \Bl_Z Y\to Y$ is the projection,
and $\cI$ (resp.\ $\cJ$) is the sheaf of ideals defining $Y$ (resp.\ $\Bl_Z Y$) in $X$ (resp.\ $\Bl_Z X$).
Since $\rN_Y X$ (resp.\ $\rN_{\Bl_Z Y}(\Bl_Z X)$) is the vector bundle associated with the dual of $\cI/\cI^2$ (resp.\ $\cJ/\cJ^2$),
we have an isomorphism of vector bundles
\[
(\Bl_Z Y\times_Y \rN_Y X)(E)
\simeq
\rN_{\Bl_Z Y}(\Bl_Z X).
\]
We also have
\[
(Y,Z)\times_{\Bl_Z Y} \rN_{\Bl_Z Y}(\Bl_Z X)
\simeq
\rN_{(Y,Z)}(X,Z),
\]
so Proposition \ref{blow.5} finishes the proof.
\end{proof}

\begin{const}
\label{Gysin.8}
Let $\cE$ be a vector bundle on $X$.
We have the Gysin isomorphisms
\[
M_S(\cE/(\cE\times_X (X,Y)))
\simeq
M_S((i^*\cE\times_Y \rN_Y X) / (i^*\cE\times_Y (\rN_Y X,Y))),
\]
\begin{align*}
&M_S((\cE,X)/((\cE,X)\times_X (X,Y)))
\\
\simeq &
M_S(((i^*\cE,Y)\times_Y \rN_Y X) / ((i^*\cE,Y)\times_Y (\rN_Y X,Y))).
\end{align*}
On the other hand,
we have a natural isomorphism
\[
M_Y(\Th_Z(i^*\cE))\otimes_Y M_Y(\Th_Y(\rN_Y X))
\simeq
M_Y(i^*\cE/(i^*\cE,Y)) \otimes_Y M_Y(\rN_Y X/(\rN_Y X,Y)).
\]
Combining these,
we obtain a natural cofiber sequence
\begin{equation}
\label{Gysin.8.1}
\Th_{(X,Y)}(\cE\times_X (X,Y))
 \to 
\Th_{X}(\cE)
\xrightarrow{\Gys} 
(fi)_\sharp (\Th_Y(i^*\cE)\otimes_Y \Th_Y(\rN_Y X)).
\end{equation}
\end{const}

This allows to show the multiplicative property of Thom spaces as follows. 

\begin{prop}
\label{Gysin.9}
Let $0\to \cE'\to \cE\to \cE''\to 0$ be an exact sequence of vector bundles over $X$.
Then there exists a natural isomorphism
\begin{equation}
\label{Gysin.9.1}
\Th_X(\cE)
\simeq
\Th_X(\cE')\otimes_X \Th_X(\cE'').
\end{equation}
\end{prop}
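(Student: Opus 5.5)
The plan is to apply the cofiber sequence \eqref{Gysin.8.1} of Construction \ref{Gysin.8} to the closed immersion given by the zero section. Set $X':=\cE''$, regarded as a vector bundle over $X$ and hence an object of $\lSm_S^\sat$. Let $i\colon X\to \cE''$ be its zero section, and let $p\colon \cE''\to X$ be the projection. Working Zariski locally, or using the affine bundle structure, realize $\cE$ as the vector bundle $p^*\cE'$ over $\cE''$; strictly, $\cE$ is a $p^*\cE'$-torsor over $\cE''$, so I would instead apply Construction \ref{Gysin.8} directly to the pair $(\cE, \cE')$. Concretely, the zero-section inclusion $\cE'\to \cE$ is a strict closed immersion in $\lSm_X^\sat$ with normal bundle $\rN_{\cE'}\cE\simeq q^*\cE''$, where $q\colon \cE'\to X$ is the projection. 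I work over the base $X$ and then apply $f_\sharp$.

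The key steps are as follows. First, consider the chain of strict closed immersions $X\to \cE'\to \cE$, given by the zero sections. Second, compare
\[
\Th_X(\cE)=M_X(\cE/(\cE,X))
\]
with the quotient obtained by first passing to $(\cE,\cE')$. Proposition \ref{Gysin.27} gives $M_X((\cE,X),(\cE',X))\simeq M_X(\cE,\cE')$. Proposition \ref{Gysin.28} gives the cofiber sequence
\[
M_X(\cE,\cE')\to M_X(\cE,X)\to \Th_{(\cE',X)}((\cE',X)\times_{\cE'}\rN_{\cE'}\cE).
\]
The log Gysin sequence \eqref{Gysin.0.3} for $\cE'\to\cE$ gives
\[
M_X(\cE,\cE')\to M_X(\cE)\to \Th_{\cE'}(q^*\cE'').
\]
Taking cofibers of the map between these two sequences and using the octahedral axiom, I get
\[
\Th_X(\cE)\simeq \cofib\bigl(\Th_{(\cE',X)}(q'^*\cE'')\to \Th_{\cE'}(q^*\cE'')\bigr),
\]
where $q'\colon(\cE',X)\to X$ is the composite.

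Third, identify this cofiber with $\Th_X(\cE'')\otimes\Th_X(\cE')$. The map is induced by $(\cE',X)\to\cE'$. By the $\lSm^\sat$ projection formula,
\[
\Th_{\cE'}(q^*\cE'')=q_\sharp q^*\Th_X(\cE'')\simeq M_X(\cE')\otimes\Th_X(\cE''),
\]
and similarly $\Th_{(\cE',X)}(q'^*\cE'')\simeq M_X(\cE',X)\otimes\Th_X(\cE'')$. Taking the cofiber therefore yields
\[
M_X(\cE'/(\cE',X))\otimes \Th_X(\cE'')=\Th_X(\cE')\otimes\Th_X(\cE''),
\]
as desired.

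The main obstacle is that the exact sequence need not split, so $\rN_{\cE'}\cE$ must be identified with $q^*\cE''$ canonically. This identification is standard: $\cE'\subset \cE$ is a subbundle with quotient $\cE''$, and the normal bundle of a linear subbundle is the pullback of the quotient bundle. The harder point is making the whole comparison natural, and in particular checking that the square of maps between the two cofiber sequences genuinely commutes. For this I would use the natural-transformation form of the Gysin sequence in Theorem \ref{setup.1}, which is functorial in the inputs, rather than the object-level sequences.
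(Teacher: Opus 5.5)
Your argument is correct. Its first half matches the paper's proof exactly: the chain of zero sections $X\to\cE'\to\cE$, Propositions \ref{Gysin.27} and \ref{Gysin.28}, the log Gysin sequence \eqref{Gysin.0.3}, and the octahedral axiom. Together these exhibit $\Th_X(\cE)$ as the cofiber of $\Th_{(\cE',X)}((\cE',X)\times_{\cE'}\rN_{\cE'}\cE)\to\Th_{\cE'}(\rN_{\cE'}\cE)$.

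The two proofs diverge in how they identify this cofiber. The paper applies Construction \ref{Gysin.8} to the zero section $X\to\cE'$ with the bundle $\rN_{\cE'}\cE$ on $\cE'$, which is a second deformation-to-the-normal-cone argument. You instead use that $\rN_{\cE'}\cE\simeq q^*\cE''$ is pulled back from $X$. Then the $\lSm^\sat$ base change and projection formulas split off a factor $\Th_X(\cE'')$ from both Thom spaces, and the cofiber is $M_X(\cE'/(\cE',X))\otimes_X\Th_X(\cE'')=\Th_X(\cE')\otimes_X\Th_X(\cE'')$.

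Your route is more elementary, and naturality of this last step is automatic because the projection formula is a natural transformation. The one point you should add is that the adding-boundary construction commutes with the base changes along $q$ and $q'$: $(q^*\cE'',\cE')\simeq(\cE'',X)\times_X\cE'$, and similarly over $(\cE',X)$. This holds because the blow-up of a vector bundle along its zero section commutes with arbitrary base change, which matters here since $\ul{(\cE',X)}\to X$ is not flat.

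The paper's route relies on Construction \ref{Gysin.8}, which handles bundles on $\cE'$ not coming from $X$. The paper needs that construction anyway for \eqref{Gysin.9.2} and Theorem \ref{Gysin.11}.

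Both proofs implicitly use that, under the Gysin identifications, the map between the two Thom spaces is the one induced by $(\cE',X)\to\cE'$. The paper does not verify this either, so your proposal is at the same level of rigor there. Finally, the torsor discussion in your first paragraph is not needed.
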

\begin{proof}
Propositions \ref{Gysin.27} and \ref{Gysin.28} yield a commutative triangle
\[
\begin{tikzcd}
M_X(\cE,\cE')\ar[rr]\ar[rd]& &
M_X(\cE,X)\ar[ld]
\\
&M_X(\cE)
\end{tikzcd}
\]
with natural isomorphisms
\begin{gather*}
M_X(\cE/(\cE,\cE'))\simeq \Th_{\cE'}(\rN_{\cE'} \cE),
\\
M_X((\cE,X)/(\cE,\cE'))
\simeq
\Th_{(\cE',X)}((\cE',X)\times_{\cE'} \rN_{\cE'} \cE),
\end{gather*}
where $X$ in $(\cE,X)$ is regarded as the zero section.
Hence we have natural isomorphisms
\[
M_X(\cE/(\cE,X))
\simeq
M_X \left(\frac{\cE/(\cE,\cE')}{(\cE,X)/(\cE,\cE')}\right)
\simeq
\frac{\Th_{\cE'}(\rN_{\cE'}\cE)}{\Th_{(\cE',X)}((\cE',X)\times_{\cE'}\rN_{\cE'}\cE)}.
\]
Let $p\colon \cE'\to X$ be the projection.
Applying Construction \ref{Gysin.8} to the case of $(Y,X,\cE)=(X,\cE',\rN_{\cE'} \cE)$ yields
\[
\frac{\Th_{\cE'}(\rN_{\cE'}\cE)}{\Th_{(\cE',X)}((\cE',X)\times_{\cE'}\rN_{\cE'}\cE)}
\simeq
\Th_X(i_0^*\rN_{\cE'}\cE)\otimes_X \Th_X(\cE'),
\]
where $i_0\colon X\to \cE'$ is the zero section.
Use $\rN_{\cE'}\cE\simeq p^*\cE''$ and $M_X(\cE/(\cE,X))\simeq \Th_X(\cE)$ to conclude.
\end{proof}

By Proposition \ref{Gysin.9},
we can write \eqref{Gysin.8.1} as a natural cofiber sequence
\begin{equation}
\label{Gysin.9.2}
\Th_{(X,Z)}((X,Z)\times_X \cE)
\to
\Th_{X}(\cE)
\xrightarrow{\Gys}
\Th_Z(i^*\cE\oplus \rN_Z X).
\end{equation}

Now,
we prove the main result of this section.

\begin{thm}
\label{Gysin.11}
There exists a natural commutative diagram
\[
\begin{tikzcd}[row sep=small]
M_S(X/(X,Y))\ar[r,"\Gys","\simeq"']\ar[d]&
\Th_Y(\rN_Y X)\ar[dd,"\Gys"]
\\
M_S(X/(X,Z))\ar[d,"\Gys"',"\simeq"]
\\
\Th_Z(\rN_Z X)\ar[r,"\simeq"]
&
\Th_Z(j^*\rN_Y X\oplus \rN_Z Y),
\end{tikzcd}
\]
where the right vertical morphism is obtained by \eqref{Gysin.9.2},
and the lower horizontal isomorphism is obtained by \eqref{Gysin.9.1} and the canonical exact sequence of vector bundles
\[
0\to \rN_Z Y \to \rN_Z X \to j^* \rN_Y X \to 0.
\]
\end{thm}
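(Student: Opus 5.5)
I will follow Déglise's strategy for \cite[Theorem 4.32]{MR2466188}: use the double deformation space to interpolate between the two routes, and compare everything through the Gysin isomorphisms of Theorem \ref{Gysin.25}. As in the proof of Theorem \ref{Gysin.25}, I first reduce to a simpler local situation. Zariski locally on $X$, Proposition \ref{Gysin.26} gives $S'$ over which $X,Y,Z$ are strict smooth. Pulling back along $S\to\ul{S}$ then reduces to $S\in\Sch$ and $X,Y,Z\in\Sm_S$. All objects in the diagram satisfy Zariski descent and all maps are natural, so it suffices to build the diagram naturally and check commutativity there. The left vertical map $M_S(X/(X,Y))\to M_S(X/(X,Z))$ is induced by $(X,Z)\to(X,Y)$. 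More precisely, it factors through $((X,Z),(Y,Z))\to (X,Z)$ together with the isomorphism of Proposition \ref{Gysin.27}.

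The key construction is the double deformation space. Let $\rD:=\rD_{\rD_Z Y}(\rD_Z X)$, or equivalently the deformation over $\square\times\square$ with coordinates $(s,t)$. Its fibres are
\begin{gather*}
(s,t)=(1,1)\colon X,\qquad (0,1)\colon \rN_Y X,\qquad (1,0)\colon \rN_Z X,\\
(0,0)\colon \rN_{\rN_Z Y}(\rN_Z X)\simeq \rN_Z Y\times_Z j^*\rN_Y X.
\end{gather*}
Inside $\rD$ sit the closed subobjects $Z\times\square^2$ and the deformation of $Y$. For the pair "$V$ modulo $(V,Z')$", with $Z'$ the relevant copy of $Z$ in each fibre, Theorem \ref{setup.1}-type arguments give a $2\times 2$ grid of comparison maps. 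Each is an isomorphism because it is an instance of Theorem \ref{Gysin.25} applied in one variable; this uses $\rD_W V\simeq V\times_X \rD_Z X$ and the base change of deformation spaces along $s=0,1$. Along the edge $t=1$, these maps compose to the lower left Gysin map $M_S(X/(X,Z))\simeq\Th_Z(\rN_Z X)$. Along $s=1$ and then $t=0$, they give the map through $\rN_Y X$, that is, the Gysin map \eqref{Gysin.9.2} for $Z\to Y$ with $\cE=\rN_Y X$. Naturality with respect to the maps "modulo $Y$" supplies the top row. The comparison for $\rN_Y X$ restricted to $Z$ is exactly Construction \ref{Gysin.8}.

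The remaining task is to identify the corner $(0,0)$ through both paths. Both paths land in $M_S(\cG/(\cG,Z))$ with $\cG:=\rN_Z Y\times_Z j^*\rN_Y X$. Proposition \ref{Gysin.9} identifies this with $\Th_Z(j^*\rN_Y X\oplus\rN_Z Y)$. Along one path the identification is via the splitting $\rN_{\rN_Z Y}(\rN_Z X)$. Along the other, the isomorphism $\Th_Z(\rN_Z X)\simeq\Th_Z(\cG)$ is the one from the exact sequence and \eqref{Gysin.9.1}. I then check that the deformation to the normal cone of the linear subbundle $\rN_Z Y\subset \rN_Z X$ induces precisely the isomorphism \eqref{Gysin.9.1}. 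This follows from how Proposition \ref{Gysin.9} was proved, namely via Propositions \ref{Gysin.27} and \ref{Gysin.28} applied to $\cE'\subset\cE$.

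I expect the main obstacle to be making the two-parameter deformation work in the log setting. The comparison maps involve log structures $(V,Z\times\square^2)$ on blow-ups, and showing that the restrictions to $s=0$ and $t=0$ are still pairs to which Theorem \ref{Gysin.25} applies requires care. I would first handle this with the reduction to $S\in\Sch$ and a Zariski-local trivialization in which $Z\subset Y\subset X$ is a coordinate flag $\A^a\subset\A^{a+b}\subset\A^{a+b+c}$ over a base, after an étale/Zariski-local linearization. If that is not available, I would instead verify cartesianness on generators $M_X(V)$, as in the proof of Theorem \ref{setup.1}. The second delicate point is compatibility with the twist by $E$ in Proposition \ref{Gysin.28}, which is handled by Proposition \ref{blow.5}.
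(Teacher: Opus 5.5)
\textbf{Genuine gap: the path through $\rN_YX$ does not compute the upper-right route.} (Your edge labels are also swapped relative to your own list of fibres, but that is cosmetic.) The edge of $\rD_{\rD_ZY}(\rD_ZX)$ joining $\rN_YX$ to $\rN_{\rN_ZY}(\rN_ZX)$ is $\rN_{\rD_ZY}(\rD_ZX)$. Locally, take $X=Z[y,x]$, $Y=\{x=0\}$, $Z=\{x=y=0\}$. Then the double deformation is $Z[y/t,x/(st),s,t]$. Its slice $s=0$ (in the convention of your fibre list) is $Z[y/t,\bar x/t,t]$, with $\bar x$ the fibre coordinates of $\rN_YX$. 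This is the deformation of $\rN_YX$ along $Z$, so the edge yields the Gysin isomorphism of Theorem \ref{Gysin.25} for $Z\subset\rN_YX$.

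The right vertical map of the theorem is \eqref{Gysin.9.2}, which is built differently. By Construction \ref{Gysin.8} it uses the deformation of $\rN_YX$ along the subbundle $j^*\rN_YX$ (locally $Z[y/t,\bar x,t]$) and of $(\rN_YX,Y)$ along its restriction to $Z$. It then applies Propositions \ref{Gysin.27}, \ref{Gysin.28}, \ref{blow.5} and \ref{Gysin.9}. Your sentence ``the comparison for $\rN_Y X$ restricted to $Z$ is exactly Construction \ref{Gysin.8}'' conflates these two deformations. Showing they induce the same map out of $M_S(\rN_YX/(\rN_YX,Z))$ is essentially the theorem itself for the triple $Z\subset Y\subset \rN_YX$. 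For that triple your grid again only produces the deformation along $Z$, since $\rN_Y(\rN_YX)=\rN_YX$. So as written the argument is circular.

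\textbf{How the paper avoids this.} It never identifies individual edges with the two routes.
\begin{enumerate}
\item It first notes that the upper-right composite factors through the double quotient $M_S\left(\frac{X/(X,Y)}{(X,Z)/(X,Y)}\right)\simeq M_S(X/(X,Z))$. So both routes become natural constructions on the triple, built only from Theorem \ref{Gysin.25} and Construction \ref{Gysin.8}.
\item Each route is then compatible with the transversal fibre inclusions $X\to D\leftarrow N$ at the corners $(1,1)$ and $(0,0)$ of the triple $(D,D',D'')=(\rD_{\rD_ZY}(\rD_ZX),\rD_ZY\times\square,Z\times\square^2)$.
\item Proposition \ref{Gysin.7} shows that $M_S(X/(X,Z))\to M_S(D/(D,D''))\leftarrow M_S(N/(N,Z))$ are isomorphisms. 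This diagonal comparison is all that is needed; no intermediate edges appear.
\item This transports the whole square to the linear triple $(N,\rN_ZY,Z)$. There both routes reduce to the construction of \eqref{Gysin.9.1} in Proposition \ref{Gysin.9}: Gysin along $\rN_ZY\subset N$, then Construction \ref{Gysin.8}. So they agree by definition.
\end{enumerate}

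\textbf{Further remarks.} The log-structure difficulty you anticipated sits in Proposition \ref{Gysin.7}. It is proved by an \'etale-local reduction to $(Z\times\A^{e+d},Z\times\A^e,Z)$, admissible blow-ups, Zariski descent and $\square$-invariance. Note that your edges which deform along $Y$ or $\rN_ZY$ while quotienting by $(-,Z)$ are not literal instances of Theorem \ref{Gysin.25} either. Finally, commutativity of a diagram is not Zariski local unless the homotopies are constructed coherently. The paper localizes only to prove that specific maps are isomorphisms.
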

\begin{proof}
The composite $M_S(X/(X,Y))\to M_S(\Th_Z(j^*\rN_Y X\oplus \rN_Z Y))$ along the upper right path admits a factorization
\begin{align*}
M_S(X/(X,Y))
\to
M_S\left(\frac{X/(X,Y)}{(X,Z)/(X,Y)}\right)
\xrightarrow{\Gys} &
\frac{\Th_Y(\rN_Y X)}{\Th_{(Y,Z)}(\rN_Y X)}
\\
\xrightarrow{\Gys} &
\Th_Z(j^*\rN_Y X \oplus \rN_Z Y).
\end{align*}
Hence it suffices to construct a commutative square
\[
\begin{tikzcd}
M_S(X/(X,Z))\ar[r,"\simeq"]\ar[d,"\simeq"']&
M_S\left(\frac{X/(X,Y)}{(X,Z)/(X,Y)}\right)\ar[d,"\simeq"]
\\
\Th_Z(\rN_Z X)\ar[r,"\simeq"]&
\Th_Z(j^*\rN_Y X \oplus \rN_Z Y).
\end{tikzcd}
\]
Consider
\begin{gather*}
D:=\rD(X,Y,Z):=\rD_{Y\times_X \rD_Z X}(\rD_Z X),
\\
D':=\rD(Y,Y,Z)\simeq \rD_Z Y\times \square,
\text{ }
D'':=\rD(Z,Z,Z)\simeq Z\times \square^2,
\\
N:=\rN(X,Y,Z):=\rN_{\rN_Z Y}(\rN_Z X)
\simeq
j^*\rN_Y X \oplus \rN_Z Y,
\\
N':=\rN(Y,Y,Z)\simeq \rN_Z Y,
\text{ }
N'':=\rN(Z,Z,Z)\simeq Z.
\end{gather*}
We have a commutative diagram with horizontal isomorphisms
\[
\begin{tikzcd}
M_S(X/(X,Z))\ar[d,"\simeq"']\ar[r,"\simeq"]&
M_S(D/(D,D''))\ar[d,"\simeq"]\ar[r,leftarrow,"\simeq"]&
M_S(N/(N,N''))\ar[d,"\simeq"]
\\
\Th_Z(\rN_Z X)\ar[r]&
\Th_{D''}(\rN_{D''} D)\ar[r,leftarrow]&
\Th_{N''}(\rN_{N''} N),
\end{tikzcd}
\]
where we will show that the upper horizontal morphisms are isomorphisms in Proposition \ref{Gysin.7} below.
We similarly have a commutative diagram with horizontal isomorphisms
\[
\begin{tikzcd}[column sep=tiny]
M_S \left(\frac{X/(X,Y)}{(X,Z)/(X,Y)}\right)\ar[r,"\simeq"]\ar[d,"\simeq"']&
M_S\left(\frac{D/(D,D')}{(D,D'')/(D,D')}\right)\ar[r,leftarrow,"\simeq"]\ar[d,"\simeq"]&
M_S\left(\frac{N/(N,N')}{(N,N'')/(N,N')}\right)\ar[d,"\simeq"]
\\
\Th_Z(j^*\rN_Y X\oplus \rN_Z Y)\ar[r]&
\Th_{D''}(j^*\rN_{D'} D\oplus \rN_{D''} D)\ar[r,leftarrow]&
\Th_{N''}(j^*\rN_{N'} D\oplus \rN_{N''} N).
\end{tikzcd}
\]
Hence we can replace $(X,Y,Z)$ by $(N,N',N'')$.
In this case,
compare with the construction of \eqref{Gysin.9.1} to conclude.
\end{proof}

\begin{prop}
\label{Gysin.7}
Under the above notation,
the induced morphisms
\begin{align*}
M_S(X/(X,Z))
\to &
M_S(\rD(X,Y,Z)/(\rD(X,Y,Z),Z\times \square^2))
\\
\leftarrow &
M_S(\rN(X,Y,Z)/(\rN(X,Y,Z),Z))
\end{align*}
are isomorphisms.
\end{prop}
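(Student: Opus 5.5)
We prove Proposition \ref{Gysin.7} by applying Theorem \ref{Gysin.25} twice: first to the deformation $Z\subset X$ to $Z\subset \rN_Z X$, and then, for the second parameter, to the deformation of $Y\times_X\rD_Z X$ inside $\rD_Z X$. As in the proof of Theorem \ref{Gysin.25}, we work Zariski locally on $X$. Proposition \ref{Gysin.26} gives $S'$ over which $X,Y,Z$ are strict smooth, so we may reduce to $S\in\Sch$ and $X,Y,Z\in\Sm_S$. We pass to $\ul{S}$ by the pullback along $S\to\ul{S}$; this is compatible with forming $\rD$, $\rN$ and the boundary notation, since these are defined as fiber products over $X$.

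Set $X_1:=\rD_Z X$, $Y_1:=Y\times_X \rD_Z X\simeq \rD_Z Y$ and $Z_1:=Z\times\square$. Then $Z_1\subset Y_1\subset X_1$ are closed immersions of smooth schemes, and by definition $\rD(X,Y,Z)=\rD_{Y_1}X_1$. Over the fiber $0\in\square$ we have $Y_1|_0=\rN_Z Y\subset \rN_Z X=X_1|_0$, and over the fiber $1$ we recover $Y\subset X$. The key geometric input is that the double deformation space $D:=\rD_{Y_1}X_1$ is a smooth scheme over $\square^2$ with fibers as follows:
\begin{itemize}
\item over $(1,1)$ it is $X$;
\item over $(0,0)$ it is $N=\rN_{\rN_Z Y}(\rN_Z X)$;
\item over the lines $\{0\}\times\square$ and $\square\times\{0\}$ it is $\rD_{\rN_Z Y}(\rN_Z X)$ and $\rD_Y X$-type spaces respectively;
\item inside it, $Z\times\square^2$ embeds as a closed subscheme.
\end{itemize}
This is checked locally, where everything is a product of affine spaces with coordinates $t_1,t_2$ rescaled by $u,v$. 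The morphism in the statement factors through the inclusions
\[X=D|_{(1,1)}\to D|_{\square\times\{1\}}\to D\]
and
\[N=D|_{(0,0)}\to D|_{\square\times\{0\}}\to D,\]
or through the analogous factorization via the other axis.

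Each step is now an instance of Theorem \ref{Gysin.25}. The inclusion $X\to \rD_Z X=D|_{\{1\}\times\square}$ identifies $M_S(X/(X,Z))$ with the middle term of \eqref{Gysin.0.2} for $Z\subset X$. The inclusion $X_1\to \rD_{Y_1}X_1=D$ is again the map of Theorem \ref{Gysin.25}, applied to the closed immersion $Z_1\subset X_1$ viewed together with $Y_1$. The difficulty is that Theorem \ref{Gysin.25} concerns $M_S(X/(X,Z))$ with the same closed subscheme $Z$ that is being deformed, whereas here we deform along $Y_1$ while taking the boundary along $Z_1\times\square$. To handle this, I would use the natural transformation form, Theorem \ref{setup.1}, and the cartesian squares \eqref{setup.1.1} for $Y_1\subset X_1$, evaluated on $M_{X_1}((X_1,Z_1))$. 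Evaluating on $M_{X_1}(V)$ with $V=(X_1,Z_1)\in\lSm^\sat_{X_1}$ shows that
\[M_S((X_1,Z_1))\to M_S(V\times_{X_1}\rD_{Y_1}X_1)\]
has cofiber behavior controlled by Theorem \ref{Gysin.25} for $V\times_{X_1} Y_1\subset V$. The same holds for $V=X_1$. Taking cofibers, the maps
\[M_S(X_1/(X_1,Z_1))\to M_S(D/(D,Z_1\times\square))\]
and the corresponding map from the $0$-fiber are both isomorphisms. This uses $V\times_{X_1}\rD_{Y_1}X_1\simeq \rD_{V\times Y_1}V$ and the identification $(D,Z\times\square^2)\simeq V\times_{X_1} D$. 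That identification follows from the computation in Proposition \ref{Gysin.27}: both sides are admissible blow-ups of one another along smooth centers, so they agree motivically by \cite[Theorem 7.2.10]{logDM}.

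Composing, the first map in the statement is the composite of two isomorphisms. For the second map, do the same using the $0$-fibers. First, $N\to D|_{\square\times\{0\}}\simeq \rD_{\rN_Z Y}(\rN_Z X)$ is an isomorphism by Theorem \ref{Gysin.25}, arguing as above with the roles of the parameters swapped. Then $\rN_Z X$ is included into $D$ over $0\in\square$ of the first parameter. Commutativity of the resulting triangles is formal, since all maps are induced by morphisms of log schemes.

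The main obstacle is exactly the mismatch between the blown-up center $Z$ and the deformed subscheme $Y$. I expect to have to verify carefully, locally in coordinates, the identification $(D,Z\times\square^2)\simeq \rD_{(Y_1,Z_1)}(X_1,Z_1)$ up to an admissible blow-up along a smooth center. This is needed so that Theorem \ref{Gysin.25} applies to the pair $(Y_1,Z_1)\subset(X_1,Z_1)$, which is a strict closed immersion in $\lSm^\sat_S$ after Proposition \ref{Gysin.26}.
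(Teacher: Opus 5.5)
\textbf{There is a genuine gap in your second step.} Your first step is fine: $M_S(X/(X,Z))\to M_S(\rD_Z X/(\rD_Z X,Z\times\square))$ is an instance of Theorem~\ref{Gysin.25}. The second step needs the inclusion of the fiber $u=1$ to induce an isomorphism $M_S(X_1/(X_1,Z_1))\to M_S(D/(D,Z_1\times\square))$, with $D=\rD_{Y_1}X_1$. Here the deformed center $Y_1$ is not the center $Z_1$ of the boundary. You correctly flag this as the crux, but none of your mechanisms for it works.

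\emph{(a)} $(X_1,Z_1)$ is not an object of $\lSm_{X_1}^\sat$. The map $(X_1,Z_1)\to X_1$ is not log smooth; the paper stresses that $u_\sharp$ does not exist for $u\colon (X,Z)\to X$. So the cartesian squares \eqref{setup.1.1} cannot be evaluated at it. Moreover, $V\times_{X_1}Y_1$ is the total transform $\Bl_{Z_1}Y_1\cup\P(\rN_{Z_1}X_1)$. This is in general reducible and not smooth, so Theorem~\ref{Gysin.25} does not apply to $V\times_{X_1}Y_1\subset V$.

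\emph{(b)} Even where \eqref{setup.1.1} applies, it only identifies the cofiber of $M_S(V)\to M_S(V\times_{X_1}D)$ with that of $M_S(V\times_{X_1}(X_1,Y_1))\to M_S(V\times_{X_1}(D,Y_1\times\square))$. ``Taking cofibers'' therefore says nothing about the $Z_1$-relative motives until you compare the $Y_1$-boundary terms with their $Z_1$-modified versions. That comparison is the original mismatch problem.

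\emph{(c)} The identifications of $(D,Z_1\times\square)$ with $V\times_{X_1}D$, or with $\rD_{(Y_1,Z_1)}(X_1,Z_1)$, up to admissible blow-ups along smooth centers are false. Admissible blow-ups are isomorphisms on interiors, but these interiors differ:
\begin{itemize}
\item The interior of $(D,Z_1\times\square)$ is $D-\partial D-Z_1\times\square$. Its fiber over $u=0$ contains $\rN_{Y_1}X_1|_{Z_1}$ minus the zero section, and these points map into $Z_1$ under $D\to X_1$.
\item The interiors of $V\times_{X_1}D$ and of $\rD_{(Y_1,Z_1)}(X_1,Z_1)$ lie over $X_1-Z_1$.
\item In addition, $D\to X_1$ is not flat, so $\Bl_{Z_1}X_1\times_{X_1}D$ acquires the extra component $\P(\rN_{Z_1}X_1)\times_{Z_1}\rN_{Y_1}X_1|_{Z_1}$ over $u=0$.
\end{itemize}
Your treatment of the $(0,0)$-map has the same defect. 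It also presupposes a description of $D$ as a deformation space in the other parameter, which you do not establish.

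\textbf{The missing idea is to not iterate Theorem~\ref{Gysin.25} at all.} The paper instead reduces to a linear model in which the whole double deformation is motivically trivial:
\begin{enumerate}
\item Work Zariski locally with étale coordinates for the triple and adapt \cite[Lemma 2.28 in \S 3]{MV} to $(X,Y,Z)$.
\item Use \cite[Proposition 3.2.17]{logSH} to reduce all three terms to $(Z\times\A^{e+d},Z\times\A^e,Z)$.
\item In that model, write $\rD(X,Y,Z)$ in explicit charts. Compare it with $V\cong Z\times\A^{e+d}\times\square^2$ via an admissible blow-up $V'\to V$ along a smooth center and Zariski descent. This gives $M_S(\rD(X,Y,Z)/(\rD(X,Y,Z),Z\times\square^2))\simeq M_S(V/(V,Z\times\square^2))$.
\item Under this identification, the two maps in the statement become the $(1,1)$- and $(0,0)$-sections, and $\square$-invariance finishes the proof.
\end{enumerate}
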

\begin{proof}
Argue as in Theorem \ref{Gysin.25} to reduce to the case where $S\in \Sch$ and $X,Y,Z\in \Sm_S$.
Observe that $\rD(X,Y,Z)$ admits a natural morphism to $\square^2$ whose fiber at $(1,1)$ (resp.\ $(0,0)$) is $X/(X,Z)$ (resp.\ $\rN(X,Y,Z)$).
We can work Zariski locally on $X$,
so we may assume that there exists a commutative diagram with cartesian squares
\[
\begin{tikzcd}
Z\ar[d]\ar[r]&
Y\ar[d]\ar[r]&
X\ar[d]
\\
\A^c
\ar[r]&
\A^b\ar[r]&
\A^a
\end{tikzcd}
\]
with integers $a\geq b\geq c\geq 0$ such that $X\to \A^a$ is separated and \'etale and $\A^c\to \A^b\to \A^a$ are the usual closed immersions.
Let us adapt \cite[Lemma 2.28 in \S 3]{MV} for the triple $(X,Y,Z)$.
We have an induced commutative diagram with cartesian squares
\[
\begin{tikzcd}[row sep=small]
Z\ar[r,leftarrow]\ar[d]&
Z\times_{\A^c}Z\ar[r]\ar[d]&
Z\ar[d]
\\
Y\ar[r,leftarrow]\ar[d]&
Y\times_{\A^b}(Z\times \A^{b-c})\ar[r]\ar[d]&
Z\times \A^{b-c}\ar[d]
\\
X\ar[r,leftarrow]&
X\times_{\A^a}(Z\times \A^{a-c})\ar[r]&
Z\times \A^{a-c}.
\end{tikzcd}
\]
Since $Z\to \A^c$ is separated and \'etale, $Z\times_{\A^c} Z$ is the disjoint union of the diagonal $\Delta$ and $C:=Z\times_{\A^c} Z-\Delta$.
Note that $C$ is closed in $X\times_{\A^a} (Z\times \A^{a-c})$.
With $X':=X\times_{\A^a} (Z\times \A^{a-c})-C$ and $Y':=Y\times_{\A^b} (Z\times \A^{b-c})-C$,
we have an induced commutative diagram with cartesian squares
\begin{equation}
\begin{tikzcd}[row sep=small]
Z\ar[r,leftarrow,"\id"]\ar[d]&
Z\ar[r,"\id"]\ar[d]&
Z\ar[d]
\\
Y\ar[r,leftarrow]\ar[d]&
Y'\ar[r]\ar[d]&
Z\times \A^{b-c}\ar[d]
\\
X\ar[r,leftarrow]&
X'\ar[r]&
Z\times \A^{a-c}
\end{tikzcd}
\end{equation}
such that $X'\to X$ and $X'\to Z\times \A^{a-c}$ are \'etale.
We set $e:=b-c$ and $d:=a-b$.
By \cite[Proposition 3.2.17]{logSH}.
we have isomorphisms
\[
M_S(X/(X,Z))
\xleftarrow{\simeq}
M_S(X'/(X',Z))
\xrightarrow{\simeq}
M_S(Z\times \A^{e+d} /(Z\times \A^{e+d},Z))
\]
and similarly for $\rD(X,Y,Z)$ and $\rN(X,Y,Z)$.
Hence we may assume $(X,Y,Z)=(Z\times \A^{e+d},Z\times \A^e,Z)$.

With this reduction,
let us argue as in \cite[Theorem 7.5.4]{logDM} and \cite[Lemma 3.2.18]{logSH}.
We can write $\rD(X,Y,Z)$ as the gluing of
\begin{gather*}
Z[y_1t^{-1},\ldots,y_et^{-1},x_1(tu)^{-1},\ldots,x_d(tu)^{-1},t,u],
\\
(Z[y_1t^{-1},\ldots,y_et^{-1},x_1t^{-1},\ldots,x_dt^{-1},t,u^{-1}],\langle u^{-1}\rangle)
\\
(Z[y_1,\ldots,y_e,x_1u^{-1},\ldots,x_du^{-1},t^{-1},u],\langle t^{-1}\rangle ),
\\
(Z[y_1,\ldots,y_e,x_1,\ldots,x_d,t^{-1},u^{-1}], \langle t^{-1},u^{-1}\rangle).
\end{gather*}

Consider the gluing $V\cong Z\times \A^{e+d}\times \square^2$ of
\begin{gather*}
Z[y_1t^{-1},\ldots,y_et^{-1},x_1(tu)^{-1},\ldots,x_d(tu)^{-1},t,u],
\\
(Z[y_1t^{-1},\ldots,y_et^{-1},x_1(tu)^{-1},\ldots,x_d(tu)^{-1},t,u^{-1}],\langle u^{-1}\rangle ),
\\
(Z[y_1t^{-1},\ldots,y_et^{-1},x_1(tu)^{-1},\ldots,x_d(tu)^{-1},t^{-1},u],\langle t^{-1}\rangle),
\\
(Z[y_1t^{-1},\ldots,y_et^{-1},x_1(tu)^{-1},\ldots,x_d(tu)^{-1},t^{-1},u^{-1}],\langle t^{-1},u^{-1}\rangle).
\end{gather*}
The blow-up of $\ul{V}$ along $Z\times (0\in \A^{e+d}) \times \infty \times \P^1$ contains the gluing of
\begin{gather*}
Z[y_1t^{-1},\ldots,y_et^{-1},x_1(tu)^{-1},\ldots,x_d(tu)^{-1},t,u],
\\
Z[y_1t^{-1},\ldots,y_et^{-1},x_1(tu)^{-1},\ldots,x_d(tu)^{-1},t,u^{-1}],
\\
Z[y_1,\ldots,y_e,x_1u^{-1},\ldots,x_du^{-1},t^{-1},u],
\\
Z[y_1,\ldots,y_e,x_1u^{-1},\ldots,x_du^{-1},t^{-1},u^{-1}]
\end{gather*}
as an open subscheme.
The further blow-up $\ul{V'}$ along the closure of
\[
Z[y_1,\ldots,y_e,x_1u^{-1},\ldots,x_du^{-1},t^{-1},u^{-1}]/(x_1u^{-1},\ldots,x_du^{-1},u^{-1})
\]
contains $\ul{\rD(X,Y,Z)}$ as an open subscheme.
Consider $V'$ with the underlying scheme $\ul{V'}$ equipped with the log structure such that $V'\to V$ is an admissible blow-up along a smooth center.
Let $W'$ be the strict transform of $Z\times \square^2= Z \times (0\in \A^{e+d}) \times \square^2$ in $V'$.
Then the induced morphism $(V',W')\to (V,Z\times \square^2)$ is an admissible blow-up along a smooth center,
so we have an isomorphism
\begin{equation}
\label{Gysin.7.1}
M_S(V'/(V',W'))
\simeq
M_S(V/(V\times \square^2)).
\end{equation}
On the other hand,
we can naturally associate an open subscheme $U'$ of $V'$ from the local description of the blow-up such that $\rD(X,Y,Z)$ and $U'$ cover $V'$ and $\rD(X,Y,Z)\cap U'\cap (Z\times \square^2)=\emptyset$.
By Zariski descent,
we have an induced cocartesian square
\[
\begin{tikzcd}
M_S(U'\cap \rD(X,Y,Z)/U'\cap \rD(X,Y,Z))\ar[r]\ar[d]&
M_S(U'/ U')\ar[d]
\\
M_S(\rD(X,Y,Z)/(\rD(X,Y,Z),Z\times \square^2))\ar[r]&
M_S(V'/(V',W')).
\end{tikzcd}
\]
Hence we have an isomorphism
\begin{equation}
\label{Gysin.7.2}
M_S(\rD(X,Y,Z)/(\rD(X,Y,Z),Z\times \square^2))
\simeq
M_S(V'/(V',W')).
\end{equation}
With \eqref{Gysin.7.1} and \eqref{Gysin.7.2} in hand,
the two morphisms
\begin{align*}
& M_S(X/(X,Z)),M_S(\rN(X,Y,Z)/(\rN(X,Y,Z),Z))
\\
\rightrightarrows &
M_S(\rD(X,Y,Z)/(\rD(X,Y,Z),Z\times \square^2))
\end{align*}
can be identified with the $(1,1)$ and $(0,0)$ sections
\begin{align*}
&M_S(Z\times \A^{d+e} / (Z\times \A^{d+e},Z))
\\
 \rightrightarrows &
M_S(Z\times \A^{d+e} \times \square^2
/
(Z\times \A^{d+e} \times \square^2,Z\times \square^2)).
\end{align*}
We conclude by the $\square$-invariance.
\end{proof}

The following is the natural transformation version of Theorem \ref{Gysin.11}.

\begin{thm}
\label{Gysin.12}
Let $u\colon (X,Y)\to X$ be the canonical morphism.
Then there exists a commutative cube of functors
\[
\begin{tikzcd}[column sep=tiny, row sep=tiny]
&
f_\sharp u_\sharp u^*\ar[ld]\ar[rr]\ar[dd]&
&
0\ar[ld]\ar[dd]
\\
0\ar[rr,crossing over]\ar[dd]&
&
0
\\
&
f_\sharp\ar[rr,"\Gys",near start]\ar[ld,"\Gys"']&
&
(fi)_\sharp \Sigma^{\rN_Y X}i^*\ar[ld,"\Gys"]
\\
(fij)_\sharp \Sigma^{\rN_Z X}(ij)^*\ar[rr,"\simeq"]&
&
(fij)_\sharp \Sigma^{\rN_Z Y}j^*\Sigma^{\rN_Y X} i^*\ar[uu,crossing over,leftarrow]
\end{tikzcd}
\]
such that the back and left squares are obtained by the log Gysin sequences for $Y\to X$ and $Z\to X$.
\end{thm}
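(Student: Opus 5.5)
We would prove Theorem \ref{Gysin.12} by upgrading the object-level statement of Theorem \ref{Gysin.11} to natural transformations, in the same way that Theorem \ref{setup.1} upgraded Theorem \ref{Gysin.25}. The back face of the cube is the log Gysin sequence of Theorem \ref{setup.1} for $i\colon Y\to X$. The left face is the sequence for $ij\colon Z\to X$, precomposed with the map $f_\sharp u_\sharp u^*\to (fu')_\sharp u'^*$, where $u'\colon (X,Z)\to X$. This map is induced by the morphism $(X,Y)\to (X,Z)$ over $X$; by Proposition \ref{Gysin.27} we may identify $(X,Y)$ with $((X,Z),(Y,Z))$, which maps canonically to $(X,Z)$. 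The right face is the natural-transformation version of \eqref{Gysin.9.2} for $Z\to Y$, twisted by $\Sigma^{\rN_Y X}$. It vanishes on $f_\sharp u_\sharp u^*$ because of the cofiber sequence of Proposition \ref{Gysin.28}.

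\textbf{Step 1: realise every functor geometrically.} Every vertex and edge of the cube can be written via the double deformation space. Put $D:=\rD(X,Y,Z)$ with structure map $g\colon D\to X$, write $i_{11},i_{00}$ for its fibers over $(1,1),(0,0)\in\square^2$, and let $N:=\rN(X,Y,Z)$. We form the diagram of functors obtained from $(fg\,-)_\sharp(-)^*g^*$ applied to the log schemes
\[
X,\ (X,Y),\ (X,Z),\ D,\ (D,D'),\ (D,D''),\ N,\ (N,N'),\ (N,N''),
\]
exactly as in \eqref{setup.1.1}.

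\textbf{Step 2: the comparison maps are isomorphisms.} We show that the comparison maps from the $(1,1)$-fiber and from the $(0,0)$-fiber into $D$ induce equivalences on all the relevant cofibers. As in the proof of Theorem \ref{setup.1}, it suffices to evaluate on $M_X(V)$ for $V\in\lSm_X^\sat$. Since $\rD$ and $\rN$ are compatible with base change along $V\to X$, this reduces to Proposition \ref{Gysin.7}, applied also to the triples $(X,Y,Y)$ and $(Y,Y,Z)$ where needed, together with Theorem \ref{Gysin.25}.

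\textbf{Step 3: reduce to $N$.} The cube built for $(X,Y,Z)$ is equivalent to the one built for $N=j^*\rN_Y X\oplus\rN_Z Y$ over $Z$, with $N'=\rN_Z Y$ and $N''=Z$. Here $N$ is a split vector bundle with nested zero sections. All three Gysin transformations then become the cofiber maps of the definition of Thom spaces, combined with the $\lSm^\sat$ projection formula. The bottom horizontal isomorphism becomes the isomorphism of Proposition \ref{Gysin.9} built from Propositions \ref{Gysin.27} and \ref{Gysin.28}. So in this case the bottom face commutes by the very construction of \eqref{Gysin.9.1}, just as at the end of the proof of Theorem \ref{Gysin.11}. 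The top face and the vertical faces commute because they are cofiber sequences of functorial squares.

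\textbf{Main obstacle.} The hard part is producing a coherent cube of natural transformations rather than mere homotopies between objects. This is delicate because $\ad'$ for non-existent $u_\sharp$ is only a notational convenience. We would handle it by constructing everything as cofibers of a single diagram of functors indexed by a finite poset of log schemes over $D$, functorial in the input. Commutativity then comes from functoriality of the assignment $W\mapsto (fgw)_\sharp w^* g^*$ in $W$. Only the identification of cofibers with $\Sigma$-twists requires the projection formula, and that identification is natural.
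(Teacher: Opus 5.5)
Your proposal is correct and follows the paper's route. The paper's entire proof is the instruction to upgrade Theorem \ref{Gysin.11} to natural transformations via the evaluation-on-$M_X(V)$ argument of Theorem \ref{setup.1}, and your Steps 1--3 just carry out the proof of Theorem \ref{Gysin.11} at the functor level: the double deformation space, Proposition \ref{Gysin.7} applied to the base changes $(V,V\times_X Y,V\times_X Z)$, the reduction to $N$, and the comparison with \eqref{Gysin.9.1}. One small correction: there is in general no morphism of log schemes $(X,Y)\to(X,Z)$, so the map $f_\sharp u_\sharp u^*\to (fu')_\sharp u'^*$ has to be defined through the identification with $((X,Z),(Y,Z))$ from Proposition \ref{Gysin.27}, which is what your argument in effect does.
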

\begin{proof}
Argue as in Theorem \ref{setup.1} to upgrade Theorem \ref{Gysin.11}.
\end{proof}

\section{A few commutative diagrams}
\label{few}

We keep working with $\sT$ in \S \ref{sat}.
In this section, we will construct a few useful commutative diagrams regarding Gysin morphisms and Thom spaces.

Let $f\colon X\to S$ be a strict smooth separated morphism of fs log schemes.
Consider the commutative diagram
\begin{equation}
\label{Gysin.15.1}
\begin{tikzcd}
X\ar[r,"a"]&
X\times_S X\ar[d,"p_2"']\ar[r,"p_1"]&
X\ar[d,"f"]
\\
&
X\ar[r,"f"]&
S,
\end{tikzcd}
\end{equation}
where $a$ is the diagonal morphism,
and $p_1$ and $p_2$ are the first and second projections.
We have a composite natural transformation
\[
\mathfrak{p}_f
\colon
f_\sharp
\xrightarrow{\simeq}
f_\sharp p_{1*}a_*
\xrightarrow{\Ex}
f_* p_{2\sharp} a_*
\xrightarrow{\Gys}
f_* \Sigma^{\Omega_f^1}a^*a_*
\xrightarrow{\ad'}
f_*\Sigma^{\Omega_f^1},
\]
where the third morphism is obtained by the fact that 
the vector bundle $\rN_a$ is associated with the locally free sheaf $\Omega_f^1$.

\begin{lem}
\label{Thom.1}
Let
\[
\begin{tikzcd}
Z'\ar[d,"g''"']\ar[r,"i'"]&
X'\ar[d,"g'"]\ar[r,"f'"]&
S'\ar[d,"g"]
\\
Z\ar[r,"i"]&
X\ar[r,"f"]&
S
\end{tikzcd}
\]
be a commutative diagram of fs log schemes with cartesian squares.
We set $h:=fi$ and $h'=f'i'$.
Assume that $f$ is saturated log smooth and $i$ is a strict closed immersion.
Then there are natural commutative diagrams
\begin{equation}
\label{Thom.1.1}
\begin{tikzcd}
f_\sharp'g'^*\ar[d,"\Ex"']\ar[r,"\Gys"]&
h_\sharp' \Sigma^{\rN_{i'}}i'^*g'^*\ar[r,"\simeq"]&
h_\sharp' g''^*\Sigma^{\rN_i}i^*\ar[d,"\Ex"]
\\
g^*f_\sharp \ar[rr,"\Gys"]&
&
g^*h_\sharp\Sigma^{\rN_i}i^*,
\end{tikzcd}
\end{equation}
\begin{equation}
\label{Thom.1.2}
\begin{tikzcd}[column sep=small]
f_\sharp g_*'\ar[d,"\Ex"']\ar[r,"\Gys"]&
h_\sharp \Sigma^{\rN_i} i^*g_*'\ar[r,"\Ex"]&
h_\sharp \Sigma^{\rN_i} g_*''i'^*\ar[r,"\simeq"]&
h_\sharp g_*''\Sigma^{\rN_{i'}}i'^*\ar[d,"\Ex"]&
\\
g_* f_\sharp' \ar[rrr,"\Gys"]&
&
&
g_*h_\sharp' \Sigma^{\rN_{i'}}i'^*.
\end{tikzcd}
\end{equation}
\end{lem}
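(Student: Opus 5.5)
The Gysin transformation of Theorem \ref{setup.1} is built as a cofiber: we take $\ad'\colon (fu)_\sharp u^*\to f_\sharp$ and identify the cofiber with $h_\sharp\Sigma^{\rN_i}i^*$ through the deformation diagram \eqref{setup.1.1}. The plan is to show that every ingredient of this construction is compatible with the exchange transformations $\Ex$ for $g^*$ and $g_*$, and then pass to cofibers. The key geometric input is that the construction commutes with base change. Since $i$ is strict and $f$ is saturated, we have $(X',Z')\simeq X'\times_X(X,Z)$, $\rD_{Z'}X'\simeq X'\times_X\rD_ZX$ and $\rN_{Z'}X'\simeq X'\times_X\rN_ZX$. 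The same argument is already used in the proof of Theorem \ref{setup.1} for $V\times_X(-)$, and it holds here after reducing Zariski locally with Proposition \ref{Gysin.26}. In particular we get $g''^*\rN_i\simeq\rN_{i'}$, which produces the isomorphism $\Sigma^{\rN_{i'}}i'^*g'^*\simeq g''^*\Sigma^{\rN_i}i^*$ in \eqref{Thom.1.1}.

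For \eqref{Thom.1.1}, write each term of the diagram \eqref{setup.1.1} in the form $(fw)_\sharp w^*$ with $w$ saturated log smooth over $X$. For each such $w$ we have base change isomorphisms $\Ex\colon (f'w')_\sharp w'^*g'^*\xrightarrow{\simeq} g^*(fw)_\sharp w^*$, by the $\lSm^\sat$ base change formula applied to the cartesian squares. These are natural in the maps $\ad'$, because the exchange transformations are compatible with composition and pasting of squares; this is a routine $2$-categorical check. So $g^*$ applied to \eqref{setup.1.1} for $(X,Z)$ is identified with \eqref{setup.1.1} for $(X',Z')$ precomposed with $g'^*$. Taking cofibers gives the commutativity of the $\Gys$ square up to identifying the final cofiber with $h_\sharp\Sigma^{\rN_i}i^*$. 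That identification uses the $\lSm^\sat$ projection formula for $\rN_ZX\to Z$, and the projection formula is compatible with base change. Here the vertical maps of \eqref{Thom.1.1} are the base change isomorphisms. The diagram lives in a stable $\infty$-category, so we need the whole cofiber sequence to be natural, not just a square up to homotopy. We get this by working with the functor category: both composites are obtained from a morphism of cofiber sequences of functors.

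For \eqref{Thom.1.2}, the exchange maps $\Ex\colon f_\sharp g'_*\to g_*f'_\sharp$ are the mates of the base change isomorphisms $f'_\sharp g'^*\simeq g^*f_\sharp$ under the adjunctions $(g^*,g_*)$ and $(g'^*,g'_*)$. Similarly, $\Ex\colon i^*g'_*\to g''_*i'^*$ is the mate of $g''^*i^*\simeq i'^*g'^*$. We therefore deduce \eqref{Thom.1.2} from \eqref{Thom.1.1} by passing to mates. Concretely, we precompose \eqref{Thom.1.1} with $g'_*$, compose with the unit $\id\to g_*g^*$ and the counit $g'^*g'_*\to\id$, and then use naturality of $\Gys$ with respect to the counit, together with the triangle identities. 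The twist isomorphism $\Sigma^{\rN_i}g''_*\simeq g''_*\Sigma^{\rN_{i'}}$ is the exchange isomorphism $\Ex$ discussed after the definition of $\Sigma^{-\cE}$. It is itself the mate of the monoidality isomorphism used in \eqref{Thom.1.1}, so it fits into the same pasting.

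The main obstacle is the coherence in the first step: we must check that the base change isomorphisms are compatible with the transformations $\ad'$ coming from maps that are not adjunction counits (there is no $u_\sharp$), and with the projection-formula identification of the cofiber. I would handle this by observing that each $\ad'$ is induced by a morphism $w_1\to w_2$ of objects of $\lSm^\sat_X$, via $(fw_1)_\sharp w_1^*\simeq f_\sharp M_X(w_1)\otimes-$. Everything is then functorial in $M_X(-)$, and base change reduces to the symmetric monoidality of $g'^*$ on these motives.
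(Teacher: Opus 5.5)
Your proposal is correct and follows essentially the paper's argument: \eqref{Thom.1.1} comes from the compatibility of the deformation-to-the-normal-bundle construction with base change, which the paper checks on generators $M_X(V)$, $V\in\lSm_X^\sat$, and upgrades as in Theorem~\ref{setup.1}; and \eqref{Thom.1.2} is deduced from \eqref{Thom.1.1} by exactly the unit/$\Ex^{-1}$/counit mate diagram you describe. One correction to your last paragraph: maps such as $u\colon (X,Z)\to X$, $\rD_Z X\to X$ or $\rN_Z X\to X$ are not saturated log smooth over $X$ (only their composites with $f$ lie in $\lSm^\sat$ over $S$), so the identification $(fw)_\sharp w^*\simeq f_\sharp(M_X(W)\otimes -)$ is not available, and the coherence should instead come from the formal compatibility of $\Ex$ with the units and counits of the $((fw)_\sharp,(fw)^*)$ adjunctions, or from evaluating on the generators $M_X(V)$ as the paper does.
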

\begin{proof}
For every $V\in \lSm_X^\sat$ with $W:=V\times_X Z$,
we have a natural commutative square
\[
\begin{tikzcd}
M_{S'}(V\times_S S')\ar[r,"\Gys"]\ar[d,"\simeq"']&
\Th_{S'} (\rN_{W\times_S S'} (V\times_S S'))\ar[d,"\simeq"]
\\
g^*M_{S}(V)\ar[r,"\Gys"]&
g^*\Th_S (\rN_W V).
\end{tikzcd}
\]
Arguing as in Theorem \ref{setup.1},
we obtain \eqref{Thom.1.1}.
The diagram
\[
\begin{tikzcd}[column sep=small, row sep=small]
f_\sharp g_*'\ar[r,"\ad"]\ar[dd,"\Gys"']&
g_*g^*f_\sharp g_*'\ar[r,"\Ex^{-1}"]\ar[dd,"\Gys"]\ar[rdd,"(a)",phantom]&
g_*f_\sharp' g'^*g_*'\ar[r,"\ad'"]\ar[d,"\Gys"]&
g_*f_\sharp'\ar[dddd,"\Gys"]
\\
&
&
g_*h_\sharp' \Sigma^{\rN_{i'}}i'^*g'^*g_*'\ar[d,"\simeq"]\ar[rddd,"\ad'",bend left=20]
\\
h_\sharp \Sigma^{\rN_i} i^*g_*'\ar[r,"\ad"]\ar[d,"\Ex"']&
g_*g^*h_\sharp \Sigma^{\rN_i} i^* g_*'\ar[d,"\Ex"]\ar[r,"\Ex^{-1}"]&
g_*h_\sharp' g''^*\Sigma^{\rN_{i}}i^*g_*'\ar[d,"\Ex"]
\\
h_\sharp \Sigma^{\rN_i} g_*''i'^*\ar[r,"\ad"]\ar[d,"\Ex"']&
g_*g^*h_\sharp \Sigma^{\rN_i} g_*''i'^*\ar[d,"\Ex"]\ar[r,"\Ex^{-1}"]&
g_*h_\sharp' g''^*\Sigma^{\rN_{i}}g_*''i'^*\ar[d,"\Ex"]
\\
h_\sharp g_*''\Sigma^{\rN_{i'}}i'^*\ar[r,"\ad"]&
g_*g^* h_\sharp g_*''\Sigma^{\rN_{i'}}i'^*\ar[r,"\Ex^{-1}"]&
g_*h_\sharp' g''^*g_*''\Sigma^{\rN_{i'}}i'^*\ar[r,"\ad'"]&
g_*h_\sharp' \Sigma^{\rN_{i'}}i'^*
\end{tikzcd}
\]
commutes since the small diagram $(a)$ commutes by \eqref{Thom.1.1},
so we obtain \eqref{Thom.1.2}.
\end{proof}

\begin{prop}
\label{Thom.2}
Let
\[
\begin{tikzcd}
X'\ar[d,"f'"']\ar[r,"g'"]&
X\ar[d,"f"]
\\
S'\ar[r,"g"]&
S
\end{tikzcd}
\]
be a cartesian square of fs log schemes.
If $f$ is strict smooth separated,
then there exist commutative diagrams
\[
\begin{tikzcd}
f_\sharp'g'^*\ar[d,"Ex"']\ar[r,"\mathfrak{p}_{f'}"]&
f_*'\Sigma^{\Omega_{f'}^1}g'^*\ar[r,"\simeq"]&
f_*'g'^*\Sigma^{\Omega_f^1}\ar[d,"Ex",leftarrow]
\\
g^*f_\sharp\ar[rr,"\mathfrak{p}_f"]&
&
g^*f_*\Sigma^{\Omega_f^1},
\end{tikzcd}
\quad
\begin{tikzcd}
f_\sharp g_*'\ar[d,"\Ex"']\ar[r,"\mathfrak{p}_f"]&
f_* \Sigma^{\Omega_f^1} g_*'\ar[d,"\simeq"]&
\\
g_*f_\sharp'\ar[r,"\mathfrak{p}_f'"]&
g_*f_*'\Sigma^{\Omega_{f'}^1}.
\end{tikzcd}
\]
Hence if $\mathfrak{p}_f$ and $\mathfrak{p}_f'$ are isomorphisms,
then the natural transformations
\[
\Ex\colon g^*f_*\to f_*'g'^*,
\;
\Ex\colon f_\sharp g_*'\to g_*f_\sharp'
\]
are isomorphisms.
\end{prop}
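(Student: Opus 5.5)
The plan is to unwind $\mathfrak{p}_f$ into its four constituent transformations and check compatibility with base change one piece at a time. For the first square we pull back the whole diagram \eqref{Gysin.15.1} along $g$. This gives the analogous diagram for $f'$, with $a'$, $p_1'$, $p_2'$, and the base-change morphism $g''\colon X'\times_{S'}X'\to X\times_S X$. All the relevant squares are cartesian: the one relating $a'$ and $a$, the ones relating $p_1',p_2'$ to $p_1,p_2$, and the one relating $f'$ to $f$. We then build a large diagram whose top row is $\mathfrak{p}_{f'}g'^*$ and whose bottom row is $g^*\mathfrak{p}_f$. It has four columns of squares, one for each step of $\mathfrak{p}_f$:
\[
f_\sharp\simeq f_\sharp p_{1*}a_*,\quad \Ex,\quad \Gys,\quad \ad',
\]
with the vertical maps built from $\Ex$ transformations for $g^*$ against $(-)_\sharp$ and $(-)_*$.

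We check the four columns as follows.
\begin{itemize}
\item The first column commutes because the exchange transformations are compatible with composition, and $p_1a=\id$.
\item The second column, the exchange $f_\sharp p_{1*}\to f_*p_{2\sharp}$, commutes by the standard compatibility of the smooth base change exchange with further pullback. This is a formal pasting of exchange squares in cartesian cubes.
\item The third column is the Gysin morphism for the strict closed immersion $a$ relative to the saturated log smooth map $p_2$, and it is handled by \eqref{Thom.1.1} of Lemma \ref{Thom.1}. We apply it with base change along $g'\colon X'\to X$, so that $p_2'$ plays the role of $f'$. One detail matters here: the target is $f_*(\cdots)$ rather than $f_\sharp$, so the $\Ex\colon g^*f_*\to f'_*g'^*$ maps enter. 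We therefore use \eqref{Thom.1.1} in the form obtained after passing through $g^*f_*\to f'_*g'^*$. The isomorphism $\rN_{a'}\simeq g''^*\rN_a$, equivalently $\Omega^1_{f'}\simeq g'^*\Omega^1_f$, provides the top-right identification.
\item The fourth column, the counit $a^*a_*\to\id$, commutes with the exchange $g'^*a_*\to a'_*g'^*$ by the triangle identities.
\end{itemize}

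The second diagram, with $g_*$ and $f_\sharp g'_*$, is handled the same way using \eqref{Thom.1.2} in place of \eqref{Thom.1.1}. One can also deduce it from the first diagram by passing to mates under the adjunctions $g^*\dashv g_*$ and $g'^*\dashv g'_*$, exactly as in the proof of Lemma \ref{Thom.1}. Concretely, we paste the unit $\id\to g_*g^*$, the first diagram, and the counit $g'^*g'_*\to\id$. The main bookkeeping obstacle is the third column of the first diagram: keeping track of the exchange maps between $g^*$ and $f_*$, $p_{2\sharp}$ while applying Lemma \ref{Thom.1}. I would organize it as a cube and reduce it to \eqref{Thom.1.1} together with the naturality of $\Ex$.

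For the final claim, suppose $\mathfrak{p}_f$ and $\mathfrak{p}_{f'}$ are isomorphisms. In the first diagram, the left vertical map $\Ex\colon f'_\sharp g'^*\to g^*f_\sharp$ is an isomorphism by the $\lSm^\sat$ base change formula, and the two horizontal $\mathfrak{p}$'s are isomorphisms. Hence the composite $g^*f_*\Sigma^{\Omega_f^1}\to f'_*g'^*\Sigma^{\Omega_f^1}$ is an isomorphism. Precomposing with the equivalence $\Sigma^{-\Omega_f^1}$ gives that $\Ex\colon g^*f_*\to f'_*g'^*$ is an isomorphism. In the second diagram, the right vertical map is an isomorphism and so are the horizontals. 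Therefore $\Ex\colon f_\sharp g'_*\to g_*f'_\sharp$ is an isomorphism by two-out-of-three.
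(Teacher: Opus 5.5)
Your proposal is correct and follows essentially the same route as the paper. The paper also pulls back the diagonal/projection diagram along $g'$ and merges your first two columns into a single formal exchange square, citing Ayoub's Propositions~1.7.7 and~1.7.5. It handles the Gysin column through Lemma~\ref{Thom.1}, via \eqref{Thom.1.1} for the first diagram and \eqref{Thom.1.2} for the second, and closes with the counit compatibility, just as you do; the final invertibility deduction is the same two-out-of-three argument.
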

\begin{proof}
Consider the induced commutative diagram with cartesian squares
\[
\begin{tikzcd}
X'\ar[r,"a'"]\ar[d,"g'"']&
X'\times_{S'} X'\ar[r,"p_2'"]\ar[d,"g''"]&
X'\ar[d,"g'"]
\\
X\ar[r,"a"]&
X\times_S X\ar[r,"p_2"]&
X,
\end{tikzcd}
\]
where $p_2$ is the second projection,
and $a$ is the diagonal morphism.
We have a commutative diagram
\[
\begin{tikzcd}[row sep=small]
f_\sharp'g'^*\ar[r]\ar[ddd,"\Ex"']\ar[rddd,"(a)",phantom]&
f_*' p_{2\sharp}' a_*'g'^*\ar[r,"\Gys"]\ar[d,"\Ex",leftarrow]&
f_*' \Sigma^{\Omega_{f'}^1}a'^*a_*'g'^*\ar[r,"\ad'"]\ar[d,"\Ex",leftarrow]&
f_*'\Sigma^{\Omega_{f'}^1}g'^*\ar[dd,"\simeq",leftarrow]
\\
&
f_*' p_{2\sharp}' g''^*a_*\ar[r,"\Gys"]\ar[d,"\Ex^{-1}",leftarrow]\ar[rd,"(b)",phantom]&
f_*'\Sigma^{\Omega_{f'}^1}a'^*g''^*a_*\ar[d,"\simeq",leftarrow]
\\
&
f_*' g'^* p_{2\sharp}a_*\ar[r,"\Gys"]\ar[d,"\Ex",leftarrow]&
f_*'g'^*\Sigma^{\Omega_f^1}a^*a_*\ar[r,"\ad'"]\ar[d,"\Ex",leftarrow]&
f_*'g'^*\Sigma^{\Omega_f^1}\ar[d,"\Ex",leftarrow]
\\
g^*f_\sharp\ar[r]&
g^*f_*p_{2\sharp} a_*\ar[r,"\Gys"]&
g^*f_*\Sigma^{\Omega_f^1}a^*a_*\ar[r,"\ad'"]&
g^*f_*\Sigma^{\Omega_f^1},
\end{tikzcd}
\]
where the small diagram $(a)$ is obtained as in \cite[Proposition 1.7.7]{Ayo071},
and the small diagram (b) is due to Lemma \ref{Thom.1}.
We also have a commutative diagram
\[
\begin{tikzcd}[row sep=small]
f_\sharp g_*'\ar[r]\ar[dddd,"\Ex"']\ar[rdddd,phantom,"(c)"]&
f_* p_{2\sharp} a_* g_*'\ar[r,"\Gys"]\ar[d,"\simeq"]&
f_* \Sigma^{\Omega_f^1} a^*a_*g_*'\ar[r,"ad'"]\ar[d,"\simeq"]&
f_* \Sigma^{\Omega_f^1} g_*'\ar[ddd,"\simeq"]
\\
&
f_* p_{2\sharp} g_*''a_*'\ar[r,"\Gys"]\ar[dd,"\Ex"]\ar[rdd,phantom,"(d)"]&
f_* \Sigma^{\Omega_f^1} a^* g_*''a_*'\ar[d,"\Ex"]
\\
&
&
f_* \Sigma^{\Omega_f^1} g_*'a'^*a_*'\ar[d,"\simeq"]\ar[ruu,"\ad'",bend right=20]
\\
&
f_* g_*'p_{2\sharp}'a_*'\ar[r,"\Gys"]\ar[d,"\simeq"]&
f_*g_*'\Sigma^{\Omega_{f'}^1} a'^* a_*'\ar[r,"\ad'"]\ar[d,"\simeq"]&
f_* g_*' \Sigma^{\Omega_{f'}^1}\ar[d,"\simeq"]
\\
g_*f_\sharp'\ar[r]&
g_*f_*p_{2\sharp}'a_*'\ar[r,"\Gys"]&
g_*f_*\Sigma^{\Omega_{f'}^1}a'^*a_*'\ar[r,"\ad'"]&
g_*f_*\Sigma^{\Omega_{f'}^1},
\end{tikzcd}
\]
where the small diagram $(c)$ is obtained as in \cite[Proposition 1.7.5]{Ayo071},
and the small diagram (d) is due to Lemma \ref{Thom.1}.
The outer diagrams are the desired ones.
The last claim in the statement is obvious.
\end{proof}

\begin{prop}
\label{Thom.3}
Let $S\in \lSch$,
let $i\colon Z\to X$ be a strict closed immersion of strict smooth separated fs log schemes over $S$,
and let $f\colon X\to S$ be the structural morphism.
Then there exists a commutative diagram
\[
\begin{tikzcd}
f_\sharp i_*\ar[d,"\mathfrak{p}_f"']\ar[r,"\Gys"]&
(fi)_\sharp \Sigma^{\rN_i}i^*i_*\ar[r,"\ad'"]&
(fi)_\sharp \Sigma^{\rN_i}\ar[d,"\mathfrak{p}_{fi}"]
\\
f_*\Sigma^{\Omega_f^1} i_*\ar[rr,"\simeq"]&
&
f_*i_*\Sigma^{\Omega_{fi}^1}\Sigma^{\rN_i}.
\end{tikzcd}
\]
\end{prop}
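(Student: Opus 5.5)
We unfold both $\mathfrak{p}_f$ and $\mathfrak{p}_{fi}$ via their definitions and compare the two composites through the diagram of diagonals. Set $g:=fi$ and consider the commutative diagram
\[
\begin{tikzcd}
Z\ar[r,"b"]\ar[d,"i"']&
Z\times_S Z\ar[r,"i\times 1"]\ar[d]&
X\times_S Z\ar[r,"q_2"]\ar[d,"1\times i"]&
Z\ar[d,"i"]
\\
X\ar[r,"a"]&
X\times_S X\ar[r,"\id"]&
X\times_S X\ar[r,"p_2"]&
X,
\end{tikzcd}
\]
in which $b$ and $a$ are the diagonals, and the right-hand square is cartesian. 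Then $\mathfrak{p}_f i_*$ is the composite
\[
f_\sharp i_*\simeq f_\sharp p_{1*}a_*i_*\xrightarrow{\Ex} f_*p_{2\sharp}a_*i_*\xrightarrow{\Gys}\cdots .
\]
Since $a_*i_*\simeq (1\times i)_*(i\times 1)_*b_*$, the plan is to commute $p_{2\sharp}$ past $(1\times i)_*$ using the exchange $p_{2\sharp}(1\times i)_*\to i_*q_{2\sharp}$. This exchange is of the form appearing in Lemma \ref{Thom.1}\eqref{Thom.1.2} and in Proposition \ref{Thom.2}.

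The key input is the compatibility of Gysin maps with composition, Theorem \ref{Gysin.12}, applied to the chain of strict closed immersions
\[
Z\xrightarrow{b} Z\times_S Z\xrightarrow{i\times 1} X\times_S Z
\]
(viewed over $Z$ via $q_2$), and also to the chain
\[
Z\xrightarrow{i} X\xrightarrow{a} X\times_S X
\]
through the factorization
\[
Z\to X\times_S Z\to X\times_S X .
\]
The Gysin map for $(1\times i)\circ\Gamma$, where $\Gamma\colon Z\to X\times_S Z$ is the graph, can be computed in two ways:
\begin{enumerate}
\item first via $X\to X\times_S X$ and then via $Z\to X$, which yields the upper path $\Gys$ followed by $\ad'$ together with $\mathfrak{p}_f$;
\item via $Z\times_S Z\to X\times_S Z$ and then via the diagonal $b$, which yields $\mathfrak{p}_{fi}$ twisted by $\Sigma^{\rN_i}$.
\end{enumerate}
The normal bundle of $\Gamma$ splits as $i^*\Omega_f^1$ (dually) and decomposes via
\[
0\to \rN_Z(Z\times_S Z)\to \rN_Z(X\times_S Z)\to \rN_i\to 0 .
\]
The lower horizontal isomorphism
\[
\Sigma^{\Omega_f^1}i_*\simeq i_*\Sigma^{\Omega_{fi}^1}\Sigma^{\rN_i}
\]
comes from Proposition \ref{Gysin.9} and the conormal sequence, matching the identification in Theorem \ref{Gysin.11}.

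Concretely, I would assemble a large diagram whose cells are the following:
\begin{itemize}
\item exchange cells relating $\Ex$ for the squares built from $p_1,p_2,q_2,i$, handled as in \cite[Proposition 1.7.5]{Ayo071};
\item a cell relating the Gysin map of $a$, restricted along $1\times i$, to the Gysin map of $\Gamma$, given by base change, namely Lemma \ref{Thom.1}\eqref{Thom.1.2} applied to the cartesian square $Z\to X$ over $X\times_S Z\to X\times_S X$;
\item the cube of Theorem \ref{Gysin.12} identifying the composite Gysin maps;
\item counit cells $\ad'$, which commute by naturality.
\end{itemize}

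The main obstacle is the middle step. The Gysin map for $i\colon Z\to X$ over $S$, which appears in the top row, is not directly one of the Gysin maps in the diagonal picture. One must see that, after applying $f_*$ and the exchange, $\Gys_i$ corresponds to the Gysin map of $i\times 1\colon Z\times_S Z\to X\times_S Z$ relative to the base $Z$ via $q_2$. This should follow from \eqref{Thom.1.1} applied to the cartesian square formed by $X\times_S Z\to X$ over $Z\to S$, combined with the identity $p_1\circ(i\times 1)$.

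Keeping track of which exchange transformations are isomorphisms is also delicate. Only $\Ex$ for $\sharp$ versus $*$ along smooth base change is invertible, so the diagram must be arranged so that no inverse of a non-invertible exchange is needed. I would first write everything at the level of objects $M_S(V)$, as in the proof of Theorem \ref{setup.1}, and then upgrade the result to natural transformations.
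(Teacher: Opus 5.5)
Your plan is essentially the paper's proof. The paper factors $a\circ i$ as $(1\times i)\circ(i\times 1)\circ b$ and uses the exchange $p_{2\sharp}(1\times i)_*\to i_*q_{2\sharp}$. It closes the diagram with Lemma \ref{Thom.1} twice (once for $\Gys_a$ restricted along $1\times i$, once to identify $\Gys_i$ with $\Gys_{i\times 1}$ over $Z$), with Theorem \ref{Gysin.12} for $Z\to Z\times_S Z\to X\times_S Z$ over $Z$, and with Ayoub-type exchange and counit cells.

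There are two small slips. First, Theorem \ref{Gysin.12} is not applied to $Z\to X\to X\times_S X$: over $X$ via $p_2$ this is not a chain in $\lSm_X^\sat$, and that comparison is exactly your base-change cell. Second, the cell matching $\Gys_i$ with $\Gys_{i\times 1}$ is \eqref{Thom.1.2}, the $\sharp$/$*$-exchange version, rather than \eqref{Thom.1.1}.
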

\begin{proof}
Consider the induced commutative diagram
\[
\begin{tikzcd}[row sep=small, column sep=small]
&
Z\ar[ld,"d"']\ar[rd,"s_2d"]\ar[dd,"i",near start]
\\
Z\times_S Z\ar[rr,crossing over,"s_2",near start]&&
X\times_S Z\ar[rr,"r_2"]\ar[dd,"r_1"]&&
Z\ar[dd,"i"]
\\
&
X\ar[rd,"a"]
\\
Z\times_S X\ar[rr,"q_2"]\ar[d,"q_1"']\ar[uu,leftarrow,near start,"s_1",crossing over]&&
X\times_S X\ar[rr,"p_2"]\ar[d,"p_1"]&&
X\ar[d,"f"]
\\
Z\ar[rr,"i"]&&
X\ar[rr,"f"]&&
S,
\end{tikzcd}
\]
where $p_1$ (resp.\ $p_2$) is the first (resp.\ second) projection, $a$ and $d$ are the diagonal morphisms,
and the small squares are cartesian.
Combine the two commutative diagrams
\[
\begin{tikzcd}[row sep=small]
f_\sharp i_*\ar[r,"\simeq"]\ar[dd,"\Gys"']\ar[rd,"\simeq"]&
f_\sharp p_{1*}a_*i_*\ar[r,"\Ex"]\ar[d,"\simeq"]&
f_*p_{2\sharp} a_* i_*\ar[d,"\simeq"]
\\
&
f_\sharp p_{1*}r_{1*}s_{2*}d_*\ar[r,"\Ex"]\ar[d,"\Gys"]\ar[rd,"\Ex"]&
f_* p_{2\sharp} r_{1*} s_{2*} d_*\ar[d,"\Ex"]
\\
(fi)_\sharp \Sigma^{\rN_i}  i^*i_*\ar[r,"\simeq"]\ar[dddd,"\ad'"']
&
(fi)_\sharp \Sigma^{\rN_i} i^* p_{1*} r_{1*} s_{2*} d_*\ar[d,"\Ex"]\ar[rd,"(a)",phantom]&
f_* i_* r_{2\sharp} s_{2*} d_*\ar[d,"\Gys"]
\\
&
(fi)_\sharp \Sigma^{\rN_i}  q_{1*} s_{1*} s_2^*s_{2*} d_*\ar[d,"\simeq"]&
f_*i_* (r_2s_2)_\sharp \Sigma^{\rN_{s_2}} s_2^* s_{2*} d_*\ar[d,"\ad'"]
\\
&
(fi)_\sharp q_{1*} s_{1*} \Sigma^{\rN_{s_2}}  s_2^*s_{2*} d_*\ar[d,"\ad'"]\ar[ru,"\Ex"]&
f_*i_* (r_2s_2)_\sharp \Sigma^{\rN_{s_2}}  d_*\ar[dd,"\simeq"]
\\
&
(fi)_\sharp q_{1*} s_{1*} \Sigma^{\rN_{s_2}} d_*\ar[d,"\simeq"]\ar[ru,"\Ex"]
\\
(fi)_\sharp \Sigma^{\rN_i}\ar[r,"\simeq"]&
(fi)_\sharp  q_{1*} s_{1*}   d_*\Sigma^{\rN_i}\ar[r,"\Ex"]&
f_*i_* (r_2 s_2)_\sharp d_* \Sigma^{\rN_i}
\end{tikzcd}
\]
and
\[
\begin{tikzcd}[row sep=small, column sep=small]
f_*p_{2\sharp} a_* i_*\ar[r,"\Gys"]\ar[d,"\simeq"']&
f_*\Sigma^{\Omega_f^1} a^*a_* i_*\ar[d,"\simeq"]\ar[r,"\ad'"]&
f_* \Sigma^{\Omega_f^1} i_*\ar[ddddd,bend left=55,"\simeq"]
\\
f_* p_{2\sharp} r_{1*} s_{2*} d_*\ar[dd,"\Ex"']\ar[r,"\Gys"]\ar[rdd,"(b)",phantom]&
f_*\Sigma^{\Omega_f^1} a^*r_{1*} s_{2*} d_*\ar[d,"\Ex"]
\\
&
f_* \Sigma^{\Omega_f^1} i_* d^* s_2^* s_{2*} d_*\ar[d,"\simeq"]\ar[r,"\ad'"]&
f_* \Sigma^{\Omega_f^1} i_* d^* d_*\ar[uu,"\ad'"]\ar[d,"\simeq"]
\\
f_* i_* r_{2\sharp} s_{2*} d_*\ar[d,"\Gys"']\ar[r,"\Gys"]\ar[rd,"(c)",phantom]&
f_* i_* \Sigma^{i^*\Omega_f^1} d^*s_2^* s_{2*} d_*\ar[d,"\simeq"]\ar[r,"\ad'"]&
f_* i_* \Sigma^{i^*\Omega_f^1} d^* d_*\ar[ldd,"\simeq",bend left=15]\ar[dd,"\ad'"]
\\
f_*i_* (r_2s_2)_\sharp \Sigma^{\rN_{s_2}} s_2^* s_{2*} d_*\ar[d,"\ad'"']\ar[r,"\Gys"']&
f_* i_* \Sigma^{\Omega_{fi}^1} d^* \Sigma^{\rN_{s_2}} s_2^* s_{2*} d_*\ar[d,"\ad'"]
\\
f_*i_* (r_2s_2)_\sharp \Sigma^{\rN_{s_2}}  d_*\ar[d,"\simeq"']\ar[r,"\Gys"]&
f_*i_*\Sigma^{\Omega_{fi}^1} d^* \Sigma^{\rN_{s_2}} d_*\ar[d,"\simeq"]&
f_* i_* \Sigma^{i^* \Omega_f^1}\ar[d,"\simeq"]
\\
f_*i_* (r_2 s_2)_\sharp d_* \Sigma^{\rN_i}\ar[r,"\Gys"]&
f_*i_* \Sigma^{\Omega_{fi}^1} d^*d_* \Sigma^{\rN_i}\ar[r,"\ad'"]&
f_*i_* \Sigma^{\Omega_{fi}^1} \Sigma^{\rN_i}
\end{tikzcd}
\]
to conclude,
where the commutativity of $(a)$ and $(b)$ (resp.\ $(c)$) follows from Lemma \ref{Thom.1} (resp.\ Theorem \ref{Gysin.12}).
\end{proof}

\begin{prop}
\label{Thom.5}
Let $S\in \lSch$,
and let
\[
\begin{tikzcd}
Z'\ar[r,"i'"]\ar[d,"g'"']&
X'\ar[d,"g"]
\\
Z\ar[r,"i"]&
X
\end{tikzcd}
\]
be a cartesian square in $\lSm_S^\sat$ such that $i$ is a strict closed immersion and $g$ is log smooth,
and let $f\colon X\to S$ be the structural morphism.
Then there exist commutative diagrams
\begin{equation}
\label{Thom.5.1}
\begin{tikzcd}
f_\sharp g_\sharp\ar[r,"\Gys"]\ar[rd,"\Gys"']&
(fgi')_\sharp \Sigma^{\rN_{i'}} i'^*\ar[d,"\simeq"]
\\
&(fi)_\sharp \Sigma^{\rN_i} i^* g_\sharp,
\end{tikzcd}
\end{equation}
\begin{equation}
\label{Thom.5.2}
\begin{tikzcd}
f_\sharp g_\sharp i_*'\ar[r,"\Gys"]\ar[d,"\Ex"']&
(fgi')_\sharp \Sigma^{\rN_{i'}} i'^*i_*'\ar[r,"\ad'"]&
(fgi')_\sharp \Sigma^{\rN_{i'}}\ar[d,"\simeq"]
\\
f_\sharp i_*g_\sharp'\ar[r,"\Gys"]&
(fi)_\sharp \Sigma^{\rN_i} i^*i_* g_\sharp'\ar[r,"\ad'"]&
(fi)_\sharp \Sigma^{\rN_i} g_\sharp'.
\end{tikzcd}
\end{equation}
\end{prop}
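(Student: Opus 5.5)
We construct \eqref{Thom.5.1} first and then obtain \eqref{Thom.5.2} from it by a formal diagram chase, as in the proof of Lemma \ref{Thom.1}.

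\emph{The diagram \eqref{Thom.5.1}.} Both functors in the triangle are colimit preserving, and $\sT(X')$ is generated under colimits, shifts and Tate twists by $M_{X'}(V)$ for $V\in \lSm_{X'}^\sat$. So, arguing as in Theorem \ref{setup.1}, it suffices to produce a triangle natural in $V$ after evaluating at $M_{X'}(V)$. Regard $V$ as an object of $\lSm_X^\sat$ via $g$, and set $W:=V\times_{X'}Z'\simeq V\times_X Z$. The log Gysin morphism for $W\to V$ over $S$ is then literally the same morphism
\[
M_S(V)\xrightarrow{\Gys}\Th_S(\rN_W V),
\]
whether we view $V$ over $X'$ with the closed immersion pulled back along $i'$, or over $X$ with it pulled back along $i$. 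Here the identification $\rN_W V\simeq V\times_{X'}\rN_{Z'}X'\simeq V\times_X\rN_Z X$ uses $\rN_{i'}\simeq g'^*\rN_i$, which holds because the square is cartesian with $g$ log smooth, hence the underlying square is Tor-independent on the strict part.

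Both sides come from the cofiber sequence $M_S(V,W)\to M_S(V)\to M_S(V/(V,W))$ and from the deformation isomorphisms \eqref{Gysin.0.2} for the same pair $(V,W)$. Hence the triangle is the identity at the level of representing objects. The vertical isomorphism is the $\lSm^\sat$ projection formula $(fgi')_\sharp\Sigma^{\rN_{i'}}i'^*\simeq (fi)_\sharp g'_\sharp\Sigma^{g'^*\rN_i}g'^*i'^*$, combined with $g'_\sharp i'^*\simeq i^*g_\sharp$ from the $\lSm^\sat$ base change formula. One checks that on $M_{X'}(V)$ this vertical map is the canonical identification of Thom spaces above. Naturality in $V$ follows because every construction in Theorem \ref{setup.1} (the diagram \eqref{setup.1.1} with $\rD$, $\rN$, and the $\square$-maps) is functorial in $V$ and compatible with the change of base from $X'$ to $X$.

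\emph{The diagram \eqref{Thom.5.2}.} Precompose \eqref{Thom.5.1} with $i'_*$ and use naturality of $\Gys$ along $\Ex\colon g_\sharp i'_*\to i_*g'_\sharp$. This gives a square
\[
f_\sharp g_\sharp i'_*\to (fi)_\sharp\Sigma^{\rN_i}i^*g_\sharp i'_*\to(fi)_\sharp\Sigma^{\rN_i}i^*i_*g'_\sharp .
\]
It then remains to check the compatibility of the counits. Writing $\Ex$ as $g_\sharp i'_*\xrightarrow{\ad}i_*i^*g_\sharp i'_*\simeq i_*g'_\sharp i'^*i'_*\xrightarrow{\ad'}i_*g'_\sharp$, the required identity reduces, via the triangle identities, to the statement that
\[
i^*g_\sharp i'_*\simeq g'_\sharp i'^*i'_*\xrightarrow{\ad'} g'_\sharp
\]
agrees with $i^*$ applied to $\Ex$ followed by $\ad'$. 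This is a standard compatibility of the base change isomorphism with units and counits (compare \cite[Proposition 1.7.5]{Ayo071}).

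\emph{Main obstacle.} The delicate point is the first step: one has to ensure that the identification in \eqref{Thom.5.1} is natural as a transformation of functors, and not merely objectwise. Concretely, this means checking that the cartesian squares \eqref{setup.1.1} for $i'$ map compatibly to those for $i$ under $g_\sharp$, using $\rD_{Z'}X'\simeq X'\times_X\rD_Z X$ and $\rN_{Z'}X'\simeq X'\times_X\rN_Z X$. I would do this by writing the analogue of \eqref{setup.1.1} for $i'$ and applying $g_\sharp$ together with the base change formula termwise.
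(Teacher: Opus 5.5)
Your proposal is correct and follows essentially the same route as the paper. In both, \eqref{Thom.5.1} comes from identifying, for each $V'\in\lSm_{X'}^\sat$ with $W'=V'\times_{X'}Z'=V'\times_X Z$, the two log Gysin cofiber sequences for $W'\to V'$ (the identity on $M_S(V',W')$ and $M_S(V')$) and then upgrading to natural transformations as in Theorem \ref{setup.1}; \eqref{Thom.5.2} then follows by precomposing with $i'_*$, using naturality of $\Gys$ along $\Ex\colon g_\sharp i'_*\to i_*g'_\sharp$, and the compatibility of base change with counits. The only difference is that you spell out the counit compatibility and the identification $\rN_{i'}\simeq g'^*\rN_i$, which the paper uses tacitly; since log smooth does not imply flat, a rank comparison of the conormal sheaves justifies the latter more cleanly than an appeal to Tor-independence.
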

\begin{proof}
For every $V'\in \lSm_{X'}^\sat$ with $W':=V'\times_{X'} Z'$,
we have a natural isomorphism of cofiber sequences
\[
\begin{tikzcd}
M_S(V',W')\ar[r]\ar[d,"\id"']&
M_S(V')\ar[r,"\Gys"]\ar[d,"\id"]&
(fgi')_\sharp (\Th_{Z'}(\rN_{Z'}X') \otimes_{Z'} M_{Z'}(W'))\ar[d,"\simeq"]
\\
M_S(V',W')\ar[r]&
M_S(V')\ar[r,"\Gys"]&
(fi)_\sharp (\Th_Z(\rN_Z X) \otimes_Z M_Z(W')).
\end{tikzcd}
\]
Arguing as in Theorem \ref{setup.1},
we obtain \eqref{Thom.5.1}.
The diagram
\[
\begin{tikzcd}[row sep=small]
f_\sharp g_\sharp i_*'\ar[r,"\Gys"]\ar[rd,"\Gys"',bend right=15]\ar[dd,"\Ex"']\ar[rd,"(a)",phantom,bend left=5]&
(fgi')_\sharp \Sigma^{\rN_{i'}}i'^*i_*'\ar[d,"\simeq"]\ar[r,"\ad'"]&
(fgi')_\sharp \Sigma^{\rN_{i'}}\ar[dd,"\simeq"]
\\
&
(fi)_\sharp \Sigma^{\rN_i} i^* g_\sharp i_*'\ar[d,"\Ex"]
\\
f_\sharp i_*g_\sharp'\ar[r,"\Gys"]&
(fi)_\sharp \Sigma^{\rN_i} i^*i_* g_\sharp'\ar[r,"\ad'"]&
(fi)_\sharp \Sigma^{\rN_i} g_\sharp'
\end{tikzcd}
\]
commutes since the small diagram $(a)$ commutes by \eqref{Thom.5.1}, so we obtain \eqref{Thom.5.2}.
\end{proof}

\begin{prop}
\label{Gysin.24}
Let $S\in \lSch$,
let $i\colon Z\to X$ be a strict closed immersion in $\lSm_S^\sat$,
and let $f\colon X\to S$ be the structural morphisms.
Then for all $\cF\in \sT(X)$ and $\cG\in \sT(S)$,
there is a natural commutative diagram
\[
\begin{tikzcd}
f_\sharp (\cF\otimes_X f^*\cG)\ar[r,"\Gys"]\ar[d,"\Ex"']&
(fi)_\sharp \Sigma^{\rN_i} i^* (\cF\otimes_X f^*\cG)\ar[r,"\simeq"]&
(fi)_\sharp (\Sigma^{\rN_i} i^*\cF \otimes_Z i^*f^*\cG)\ar[d,"\Ex"]
\\
f_\sharp \cF\otimes_S \cG\ar[rr,"\Gys"]&
&
(fi)_\sharp \Sigma^{\rN_i} i^* \cF \otimes_S \cG.
\end{tikzcd}
\]
\end{prop}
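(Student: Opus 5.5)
The plan follows the same pattern as Theorem \ref{setup.1}, Lemma \ref{Thom.1} and Proposition \ref{Thom.5}: prove the statement on generators and then promote it to natural transformations. Both sides of the square commute with colimits, shifts and Tate twists in $\cF$ and $\cG$ separately. We have $f_\sharp$, $(fi)_\sharp$, $i^*$, $f^*$ and $\Sigma^{\rN_i}$ are colimit preserving, and the tensor product preserves colimits in each variable. Hence by the generation axiom we may reduce to $\cF=M_X(V)$ with $V\in \lSm_X^\sat$ and $\cG=M_S(T)$ with $T\in \lSm_S^\sat$. Since the square has to be constructed naturally, and not just checked objectwise, I would build it exactly as in Theorem \ref{setup.1}. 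There, the log Gysin transformation is the cofiber of the counit-type map $(fu)_\sharp u^*\to f_\sharp$, identified via the deformation diagram \eqref{setup.1.1}. So it suffices to produce a commutative diagram of the three-term cofiber sequences before taking cofibers.

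Concretely, $\Gys$ is defined as the composite of two pieces. First, the identification of $\cofib((fu)_\sharp u^*\to f_\sharp)$ with $\cofib((fgi_0u'')_\sharp u''^* i_0^*g^*\to (fgi_0)_\sharp i_0^*g^*)$ through the zig-zag of cartesian squares in \eqref{setup.1.1}. Second, the projection formula identification of the latter with $(fi)_\sharp\Sigma^{\rN_i}i^*$. Each functor appearing in \eqref{setup.1.1} is of the form $h_\sharp h^*$ for some $h\in \lSm^\sat$ over $S$ (namely $fgi_1u$, $fgu'$, $fgi_0u''$ and so on). For each such $h$, the $\lSm^\sat$ projection formula gives a natural isomorphism
\[
h_\sharp(h'^*\cF\otimes h^*f^*\cG)\simeq h_\sharp h'^*\cF\otimes_S\cG,
\]
where $h'$ denotes the corresponding map to $X$. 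These exchange isomorphisms are compatible with the counit maps $\ad'$ between the various $h_\sharp h^*$. That compatibility is the standard fact that the projection formula exchange is compatible with composition of smooth morphisms and with the counits (cf.\ \cite[\S 1.7]{Ayo071}). So the whole diagram \eqref{setup.1.1}, evaluated at $\cF\otimes_X f^*\cG$, maps isomorphically to the same diagram evaluated at $\cF$ and then tensored with $\cG$. Since $-\otimes_S\cG$ is exact, it preserves the cofibers, which yields a commutative square relating the two Gysin maps.

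The remaining step is to identify the resulting right vertical map with the composite shown in the statement. That composite is the monoidality isomorphism of $i^*$ and of $\Sigma^{\rN_i}$, which uses $\Sigma^{\rN_i}=\otimes\Th_Z(\rN_i)$ and associativity, followed by $\Ex$ for $fi$. To do this, I would check that the final projection-formula identification
\[
(fgi_0)_\sharp(\cdots)\simeq (fi)_\sharp\Sigma^{\rN_i}i^*
\]
from the proof of Theorem \ref{setup.1} is itself compatible with $\otimes f^*\cG$. This is again a composition of projection formula exchanges along $\rN_ZX\to Z\to S$ together with associativity of $\otimes$.

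I expect the main obstacle to be this coherence bookkeeping: verifying that the various $\Ex$ maps for the projection formula compose correctly along the zig-zag. I would handle it by noting that all the maps involved are mates of the symmetric monoidal structure of pullbacks, which are coherently compatible with composition. If needed, I would fall back on checking the squares on the generators $M_X(V)\otimes f^*M_S(T)\simeq M_X(V\times_S T)$. On these generators both Gysin maps are literally the log Gysin morphism for $V\times_S T\supset W\times_S T$, with $W:=V\times_X Z$, since deformation spaces commute with the base change $-\times_S T$.
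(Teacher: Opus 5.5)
Your proposal is correct and is essentially the paper's argument: the paper checks the square on generators $\cF=M_X(V)$ and $\cG=M_S(W)$, where both Gysin maps become the log Gysin morphism for $V\times_S W$, and then upgrades to natural transformations ``arguing as in Theorem \ref{setup.1}''. Your account of that upgrade, applying projection-formula exchanges along the deformation diagram \eqref{setup.1.1} and then tensoring the cofiber sequences with $\cG$, spells out what the paper leaves implicit.
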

\begin{proof}
For all $V\in \lSm_X^\sat$ and $W\in \lSm_S^\sat$,
we have a natural commutative square
\[
\begin{tikzcd}
M_S(V\times_S W)\ar[d,"\id"']\ar[r,"\Gys"]&
\Th_{Z\times_X V\times_S W}(\rN_i \times_X V\times_S W)\ar[d,"\simeq"]
\\
M_S(V\times_S W)\ar[r,"\Gys"]&
\Th_{Z\times_X V}(\rN_i \times_X V) \otimes_S M_S(W).
\end{tikzcd}
\]
Arguing as in Theorem \ref{setup.1},
we obtain the desired diagram.
\end{proof}

\begin{prop}
\label{Gysin.23}
Let $f\colon X\to S$ be a strict smooth separated morphism of fs log schemes.
Then for every $\cF\in \sT(X)$ and $\cG\in \sT(S)$,
there exists a natural commutative diagram
\[
\begin{tikzcd}
f_\sharp(\cF\otimes_X f^*\cG)
\ar[r,"\mathfrak{p}_f"]\ar[d,"\Ex"']&
f_* \Sigma^{\Omega_f^1}(\cF\otimes_X f^* \cG)\ar[r,"\simeq"]&
f_* (\Sigma^{\Omega_f^1}\cF\otimes_X f^* \cG)\ar[d,"\Ex",leftarrow]
\\
f_\sharp \cF \otimes_S \cG\ar[rr,"\mathfrak{p}_f"]&
&
f_* \Sigma^{\Omega_f^1} \cF\otimes_S \cG.
\end{tikzcd}
\]
\end{prop}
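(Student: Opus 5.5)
We unfold $\mathfrak{p}_f$ into its four constituent arrows
\[
f_\sharp\xrightarrow{\simeq} f_\sharp p_{1*}a_* \xrightarrow{\Ex} f_*p_{2\sharp}a_* \xrightarrow{\Gys} f_*\Sigma^{\Omega_f^1}a^*a_* \xrightarrow{\ad'} f_*\Sigma^{\Omega_f^1},
\]
precompose with $(-)\otimes_X f^*\cG$, and check that each resulting square commutes. All notation is as in \eqref{Gysin.15.1}. The intermediate objects are all rewritten using the identities $f p_1=f p_2$, $p_1 a=p_2 a=\id$, $a^*p_2^*f^*\cG\simeq f^*\cG$, and $p_1^*f^*\cG\simeq p_2^*f^*\cG$. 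The guiding principle is that $f^*\cG$ pulled back to $X\times_S X$ is $p_2^*f^*\cG$, so every exchange involving $\cG$ takes place along the second factor.

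\textbf{Step 1.} Treat the first two arrows. We use the projection formula for the closed immersion $a$, namely $a_*(\cF\otimes a^*p_2^*f^*\cG)\simeq a_*\cF\otimes p_2^*f^*\cG$; this holds because $a_*$ is right adjoint to a monoidal $a^*$, so there is a canonical exchange map. We also use the corresponding exchanges for $p_{1*}$ and $f_*$. If these maps are not isomorphisms in general, we instead work with the lax exchange maps and check commutativity of the diagrams of lax maps; since the outer right vertical arrow in the statement is itself an exchange map $f_*(-)\otimes\cG\to f_*(-\otimes f^*\cG)$, this suffices. The square for $\Ex\colon f_\sharp p_{1*}\to f_*p_{2\sharp}$ then commutes against the $\lSm^\sat$ projection formula for $f_\sharp$ and $p_{2\sharp}$. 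This is a formal compatibility of mates, proved as in \cite[Proposition 1.7.5, 1.7.7]{Ayo071}: one writes both composites as mates of the base change isomorphism $f^*f_\sharp\simeq p_{1\sharp}p_2^*$ and uses that the projection formula isomorphisms are compatible with base change.

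\textbf{Step 2.} Treat the Gysin arrow. We apply Proposition \ref{Gysin.24} to the strict closed immersion $a\colon X\to X\times_S X$ with structure morphism $p_2\colon X\times_S X\to X$, taking as coefficient object $f^*\cG\in\sT(X)$, on which $p_2^*$ acts. This yields compatibility of $\Gys\colon p_{2\sharp}\to \Sigma^{\rN_a}a^*$ with $\otimes f^*\cG$. We then apply $f_*$ and paste with the lax monoidal exchange for $f_*$. Naturality of that exchange handles the interface.

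\textbf{Step 3.} Treat the counit. The final arrow $\ad'\colon a^*a_*\to\id$ is compatible with the projection isomorphism for $a_*$ and the monoidality of $a^*$ by the standard triangle identities, and $\Sigma^{\Omega_f^1}$ commutes with $\otimes f^*\cG$ by symmetric monoidality.

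Pasting the squares from Steps 1--3 gives the stated diagram. We expect Step 1 to be the main obstacle. It interlaces the $\sharp$-projection formula, the $*$-lax projection maps and the exchange $f_\sharp p_{1*}\to f_*p_{2\sharp}$. These must be checked by reducing everything to mates of the base change and unit/counit data, while keeping track of which factor $\cG$ is pulled back along. Steps 2 and 3 are direct citations of Proposition \ref{Gysin.24} and formal adjunction identities.
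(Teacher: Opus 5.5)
Your proposal is correct and follows essentially the same route as the paper. The paper also unfolds $\mathfrak{p}_f$ and handles the $f_\sharp p_{1*}a_*\to f_*p_{2\sharp}a_*$ part by a formal diagram (citing the proof of \cite[Lemma 2.4.23(3)]{CD19} rather than Ayoub). It then gets the Gysin square from Proposition~\ref{Gysin.24} applied to $a$ over $p_2$ with coefficients $a_*\cF$ and $f^*\cG$, and treats the counit formally. One correction: your hedge in Step 1 is not optional, since nothing in the axioms makes the projection map $a_*\cF\otimes p_2^*f^*\cG\to a_*(\cF\otimes a^*p_2^*f^*\cG)$ invertible, and the paper's diagram accordingly uses only the lax maps for $a_*$ and $f_*$ alongside the invertible $\sharp$-projection formulas for $f$ and $p_2$.
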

\begin{proof}
Consider the diagram \eqref{Gysin.15.1}.
As in \cite[Proof of Lemma 2.4.23(3)]{CD19},
we have a commutative diagram
\[
\begin{tikzcd}[column sep=small, row sep=small]
f_\sharp(\cF\otimes_X f^*\cG)\ar[r,"\simeq"]\ar[dddd,"\Ex"']&
f_\sharp p_{1*}a_*(\cF\otimes_X f^*\cG)\ar[r,"\Ex"]&
f_*p_{2\sharp}a_*(\cF\otimes_X f^*\cG)\ar[d,"\simeq",leftarrow]
\\
&
&
f_*p_{2\sharp}a_*(\cF\otimes_X a^*p_2^* f^*\cG)\ar[d,leftarrow,"\Ex"]
\\
&
&
f_*p_{2\sharp}(a_* \cF\otimes_{V} p_2^*f^*\cG)\ar[d,"\Ex^{-1}",leftarrow]
\\
&
&
f_*(p_{2\sharp}a_*\cF\otimes_X f^*\cG)\ar[d,"\Ex",leftarrow]
\\
f_\sharp \cF\otimes_S \cG\ar[r,"\simeq"]&
f_\sharp p_{1*}a_*\cF\otimes_S \cG\ar[r,"\Ex"]&
f_*p_{2\sharp}a_*\cF \otimes_S \cG,
\end{tikzcd}
\]
where $V:=X\times_S X$ for abbreviation.
Combine this with the commutative diagram
\[
\begin{tikzcd}[row sep=small, column sep=tiny]
f_* p_{2\sharp}a_*(\cF \otimes_X f^*\cG)\ar[r,"\Gys"]\ar[d,"\simeq"',leftarrow]&
f_* \Sigma^{\Omega_f^1}a^*a_* (\cF\otimes_X f^*\cG)\ar[r,"\ad'"]\ar[d,"\simeq",leftarrow]&
f_* \Sigma^{\Omega_f^1}(\cF\otimes_X f^*\cG)\ar[ddd,"\simeq",leftarrow]
\\
f_*p_{2\sharp}a_*(\cF\otimes_X a^*p_2^* f^*\cG)\ar[d,leftarrow,"\Ex"']\ar[r,"\Gys"]&
f_*\Sigma^{\Omega_f^1} a^*a_*(\cF\otimes_X a^*p_2^* f^*\cG)\ar[d,"\Ex",leftarrow]
\\
f_*p_{2\sharp}(a_* \cF\otimes_{V} p_2^*f^*\cG)\ar[d,"\Ex^{-1}"',leftarrow]\ar[r,"\Gys"]\ar[rd,"(a)",phantom]&
f_*\Sigma^{\Omega_f^1}a^*(a_*\cF\otimes_V p_2^*f^*\cG)\ar[d,"\simeq",leftarrow]
\\
f_*(p_{2\sharp}a_*\cF\otimes_X f^*\cG)\ar[d,"\Ex"',leftarrow]\ar[r,"\Gys"']&
f_*(\Sigma^{\Omega_f^1}a^*a_*\cF\otimes_X f^*\cG)\ar[r,"\ad'"]\ar[d,"\Ex",leftarrow]&
f_*(\Sigma^{\Omega_f^1}\cF\otimes_X f^*\cG)\ar[d,"\Ex",leftarrow]
\\
f_*p_{2\sharp}a_*\cF \otimes_S \cG\ar[r,"\Gys"]&
f_*\Sigma^{\Omega_f^1}a^*a_*\cF\otimes_S \cG\ar[r,"\ad'"]&
f_*\Sigma^{\Omega_f^1}\cF\otimes_S \cG
\end{tikzcd}
\]
to conclude,
where the commutativity of $(a)$ follows from Proposition \ref{Gysin.24}.
\end{proof}

\section{Evaluation morphism}
\label{ev}

We keep working with $\sT$ in \S \ref{sat}.
Recall that an object $\cF$ of a symmetric monoidal $\infty$-category is called \emph{dualizable} if there exists another object $\cG$ equipped with morphisms
\[
\ev\colon \cF\otimes \cG\to 1,
\;
\mathrm{coev}\colon 1\to \cG\otimes \cF
\]
satisfying the triangle identities.
In this section,
we focus on the construction of a morphism that behaves like $\ev$ for $M_S(X)$ with strict smooth projective $X$ over $S$.
Using this,
we will show in Proposition \ref{Gysin.21} that $\mathfrak{p}_p$ is an isomorphism, where $p\colon \P^n\to S$ is the projection with $S\in \lSch$.

\begin{const}
\label{ev.1}
Let $f\colon X\to S$ be a strict smooth morphism of fs log schemes.
We have the morphism
\[
\ev
\colon
M_S(X) \otimes_S \Th_S^{-1}(\Omega_f^1)
\to
\unit_S,
\]
which is called the \emph{evaluation morphism},
given by the composite
\[
M_S(X)\otimes_S \Th_S^{-1}(\Omega_f^1)
\xrightarrow{\simeq}
\Th_S^{-1}(\Omega_{p_2}^1)
\xrightarrow{\Gys}
M_S(X)
\to
\unit_S,
\]
where $p_2\colon X\times_S X\to X$ is the second projection,
the Gysin morphism is obtained by Construction \ref{Gysin.10} using the diagonal morphism $X\to X\times_S X$,
and the last morphism is induced by $f$.

By adjunction,
we obtain a morphism
\[
\comp
\colon
\Th_X^{-1}(\Omega_f^1)
\to
D(M_S(X)),
\]
which is called the \emph{comparison morphism}.
\end{const}

Let $S\in \Sch$, let $X\in \lSm_S$, and let $\cE\to \cF$ be a morphism of vector bundles over $X$ such that $\cE\times_X (X-\partial X)\to \cF\times_X (X-\partial X)$ is an isomorphism.
We expect that we have an induced isomorphism
\begin{equation}
\Th_X^{-1}(\cF)
\xrightarrow{\simeq}
\Th_X^{-1}(\cE)
\end{equation}
in $\sT(S)$.
We only need the following case later.

\begin{prop}
\label{blow.4}
Let $S\in \lSch$,
and let $Z\to X$ be a closed immersion in $\lSm_S^\sat$.
Then we have an induced isomorphism
\begin{equation}
\label{blow.4.1}
\Th_{(X,Z)}^{-1}(\Omega_{(X,Z)/S}^1)
\xrightarrow{\simeq}
\Th_{(X,Z)}^{-1}(u^*\Omega_{X/S}^1)
\end{equation}
in $\sT(S)$,
where $u\colon (X,Z)\to X$ is the canonical morphism.
\end{prop}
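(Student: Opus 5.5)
The plan is to reduce the claim to an isomorphism of Thom spaces in $\sT(X,Z)$ and get that isomorphism from Proposition \ref{blow.5} by a two-out-of-six argument. Write $X':=\Bl_Z X$, which carries the log structure pulled back from $X$, and let $E\subset X'$ be the exceptional divisor. Then $(X,Z)=(X',E)$ in the notation of Proposition \ref{blow.5}.

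\textbf{Hypotheses of Proposition \ref{blow.5}.} We must check that $X'\in\lSm_S^\sat$ and that $E\to X'$ is a purely $1$-codimensional strict closed immersion. This is Zariski local on $X$. By Proposition \ref{Gysin.26} we may assume that $X$ and $Z$ are strict smooth over some $S'\in\lSm_S^\sat$. Then $\ul{X'}$ is the blow-up of a smooth scheme along a smooth center, relative to $S'$, so $X'\to S'$ is strict smooth and $E$ is a smooth divisor.

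\textbf{A chain of inclusions.} Write $\Omega_{\log}:=\Omega^1_{(X,Z)/S}$ and $\Omega:=u^*\Omega^1_{X/S}$. I will establish canonical inclusions of locally free sheaves on $(X,Z)$:
\[
\Omega_{\log}(-E)\subset \Omega\subset \Omega_{\log}\subset \Omega(E).
\]
Here $\Omega\subset\Omega_{\log}$ is the canonical map, and the other two inclusions are its twists. The key point is that $\Omega_{\log}\subset\Omega(E)$, or equivalently $\Omega(-E)\subset\Omega_{\log}(-E)\subset\Omega$ after twisting down.

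This is local. After reducing as above to $S$ a scheme and $Z\subset X$ smooth over it, choose local coordinates in which $Z=\{x_1=\cdots=x_c=0\}$. On a chart of the blow-up we have $x_1=y$ and $x_i=yt_i$ for $2\le i\le c$. The log forms are generated by $dy/y$, the $dt_i$, and the remaining coordinates. Then
\[
y\cdot\frac{dy}{y}=dx_1,\qquad y\,dt_i=dx_i-t_i\,dx_1,
\]
so $y\,\Omega_{\log}\subset\Omega$. Dually, on tangent bundles one has $u^*T_X(-E)\subset T_{\log}\subset u^*T_X$; for instance $y\partial_{x_i}=\partial_{t_i}$ and $y\partial_{x_1}=y\partial_y-\sum t_i\partial_{t_i}$.

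\textbf{Two-out-of-six.} Each inclusion induces a morphism of Thom spaces, using the same variance as in Proposition \ref{blow.5}. The composite of any two consecutive inclusions is the canonical inclusion $\cF(-E)\to\cF$ for $\cF=\Omega_{\log}$ or $\cF=\Omega$. By Proposition \ref{blow.5} with $D=E$, each such composite induces an isomorphism of Thom spaces. By two-out-of-six, every map in the chain then induces an isomorphism; in particular so does $\Omega\subset\Omega_{\log}$. Taking inverses in the Picard groupoid of $\sT(X,Z)$ and applying $(fu)_\sharp$, where $fu\colon(X,Z)\to S$ is saturated log smooth, gives \eqref{blow.4.1}.

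\textbf{Main obstacle.} I expect the delicate step to be keeping the variance straight. One has to decide whether the vector bundle attached to $\Omega^1$ is formed from $\Omega^1$ itself or from its dual. This determines whether the relevant inclusion is $\cE(-D)\to\cE$ on the bundle side or on the sheaf side, and hence which direction of \eqref{blow.4.1} the chain produces. The two-out-of-six argument is symmetric under this choice, so whichever convention is used, the four-term chain still gives the isomorphism. The remaining point is to check, via the local computation above, that the maps are canonical and glue across charts.
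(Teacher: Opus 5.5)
Your argument is correct in outline, and it takes a genuinely different route from the paper. The paper proceeds as follows. It reduces to $S\in\Sch$ and $X,Z\in\Sm_S$. It then uses the Morel--Voevodsky \'etale neighbourhoods and strict Nisnevich squares to reduce to the zero section $Z\to Z\times\A^n$. There it observes that on each blow-up chart $u^*\Omega^1_{X/Z}$ is \emph{exactly} $\Omega^1_{(U,U\times_X Z)/Z}(-E)$. Finally it splits off the common summand $\Omega^1_{Z/S}$ with Proposition~\ref{Gysin.9} and applies Proposition~\ref{blow.5} once.

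You never need $\Omega$ to be an exact twist of $\Omega_{\log}$. The sandwich $\Omega_{\log}(-E)\subset\Omega\subset\Omega_{\log}\subset\Omega(E)$ is a Zariski-local check. You then apply Proposition~\ref{blow.5} twice, to $\cE=\Omega_{\log}$ and to $\cE=\Omega(E)$. Note that the second composite is $\cF(-E)\subset\cF$ for $\cF=\Omega(E)$, not $\cF=\Omega$ as you wrote. Two-out-of-six then finishes, so you bypass both the \'etale/Nisnevich reduction and the multiplicativity of Thom spaces.

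The price is functoriality. Two-out-of-six requires that the Thom-space maps attached to your three inclusions compose to the maps of Proposition~\ref{blow.5}, and this is not formal. For the scalar twist $\cE(-D)\to\cE$, the map comes from an honest morphism $(\cE(-D),Y)\to(\cE,Y)$ with $Y=(X,D)$. Your intermediate inclusions, however, are in suitable local bases of the form $\mathrm{diag}(e,\dots,e,1,\dots,1)$. For such a bundle map, the blow-up of the source along its zero section does not map to the blow-up of the target until you perform a further admissible blow-up in the boundary. So the induced maps are only defined through zigzags, and you must verify that they are compatible under composition.

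The paper's route never composes two such maps. It does tacitly assume that ``the induced morphism'' is defined and compatible with restriction and with Proposition~\ref{Gysin.9}. This composition law is the one point you would need to make precise.
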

\begin{proof}
As in Theorem \ref{Gysin.25}, we reduce to the case where $S\in \Sch$ and $X,Z\in \Sm_S$.
The question is Zariski local on $X$.
Hence by \cite[Proof of Lemma 2.28 in \S 3]{MV},
we may assume that there exists a commutative diagram of schemes with cartesian squares
\[
\begin{tikzcd}
Z\ar[d]\ar[r,"\id",leftarrow]&
Z\ar[d]\ar[r,"\id"]&
Z\ar[d]
\\
X\ar[r,leftarrow]&
X_1\ar[r]&
X_2
\end{tikzcd}
\]
such that $X_1\to X,X_2$ are \'etale and $Z\to X_2$ is identified with the zero section $Z\to Z\times \A^n$ for some integer $n\geq 0$.
The induced square
\[
\begin{tikzcd}
X_1-Z\ar[d]\ar[r]&
(X_1,Z)\ar[d]
\\
X-Z\ar[r]&
(X,Z)
\end{tikzcd}
\]
is a strict Nisnevich distinguished square.
Hence the induced square
\[
\begin{tikzcd}
\Th_{(X,Z)}^{-1}(u^* \Omega_{X/S}^1)\ar[d]\ar[r]&
\Th_{(X_1,Z)}^{-1}(u_1^* \Omega_{X_1/S}^1)\ar[d]
\\
\Th_{X-Z}^{-1}(\Omega_{X-Z/S}^1)\ar[r]&
\Th_{X_1-Z}^{-1}(\Omega_{X_1-Z/S}^1)
\end{tikzcd}
\]
in $\sT(S)$ is cartesian,
where $u_1\colon (X_1,Z)\to X_1$ is the canonical morphism.
Similarly,
the induced square
\[
\begin{tikzcd}
\Th_{(X,Z)}^{-1}(\Omega_{(X,Z)/S}^1)\ar[d]\ar[r]&
\Th_{(X_1,Z)}^{-1}(\Omega_{(X_1,Z)/S}^1)\ar[d]
\\
\Th_{X-Z}^{-1}(\Omega_{X-Z/S}^1)\ar[r]&
\Th_{X_1-Z}^{-1}(\Omega_{X_1-Z/S}^1)
\end{tikzcd}
\]
is cartesian.
Hence \eqref{blow.4.1} is equivalent to
\[
\Th_{(X_1,Z)}^{-1}(u_1^* \Omega_{X/S}^1)
\xrightarrow{\simeq}
\Th_{(X_1,Z)}^{-1}(\Omega_{(X_1,Z)/S}^1)
\]
in $\sT(S)$.
By repeating this argument for $X_1\to X_2$,
we reduce to the case of $(X_2,Z)$,
so we may assume that $Z\to X$ is identified with the zero section $Z\to Z\times \A^n$.

With this reduction,
we can write $X=Z[x_1,\ldots,x_n]$.
Then $\Bl_Z X$ is isomorphic to the gluing of
\[
U_i:=Z[x_i,x_1/x_i,\ldots,x_n/x_i]
\]
for all $1\leq i\leq n$.
Also,
$u_{U_i}^*\Omega_{X/Z}^1$ has a basis $\{dx_1,\ldots,dx_n\}$,
and $\Omega_{(U_i,U_i\times_X Z)/Z}^1$ has a basis $\{(dx_1)/x_i,\ldots,(dx_n)/x_i\}$,
where $u_{U_i}\colon (U_i,U_i\times_X Z)\to X$ is the canonical morphism.
Hence we have an isomorphism
\begin{equation}
\label{blow.5.1}
u_{U_i}^*\Omega_{X/Z}^1
\simeq
\Omega_{(U_i,U_i\times_X Z)/Z}^1(-E_{U_i}),
\end{equation}
where $E_{U_i}$ is the restriction of the exceptional divisor on $\Bl_Z X$ to $U$.

By Zariski descent,
it suffices to show that the induced morphism
\[
\Th_{(U,U\times_X Z)}(\Omega_{(U,U\times_X Z)/S}^1)
\to
\Th_{(U,U\times_X Z)}(u_U^*\Omega_{X/S}^1)
\]
in $\sT(U,U\times_X Z)$ is an isomorphism for every intersection $U$ of $U_i$'s.
Here, let $u_U\colon (U,U\times_X Z)\to X$ be the canonical morphism.
We have the compatible decompositions
\[
\Omega_{X/S}^1
\simeq
\Omega_{X/Z}^1\oplus \Omega_{Z/S}^1,
\;
\Omega_{(U,U\times_X Z)/S}^1
\simeq
\Omega_{(U,U\times_X Z)/Z}^1\oplus \Omega_{Z/S}^1.
\]
Hence by Proposition \ref{Gysin.9},
it suffices to show that the induced morphism
\[
\Th_{(U,U\times_X Z)}(\Omega_{(U,U\times_X Z)/Z}^1)
\to
\Th_{(U,U\times_X Z)}(u_U^*\Omega_{X/Z}^1)
\]
in $\sT(U,U\times_X Z)$
is an isomorphism.
Due to \eqref{blow.5.1},
this is a consequence of Proposition \ref{blow.5}.
\end{proof}

Now, we discuss the evaluation morphism allowing a nontrivial log structure as follows:

\begin{const}
\label{blow.3}
Let $S\in \lSch$,
and let $Z\to X$ be a closed immersion in $\lSm_S^\sat$.
Regard $(X,Z)$ as a strict closed subscheme of $X\times_S (X,Z)$ via the graph morphism $(X,Z)\to X\times_S (X,Z)$.
Consider the canonical morphism $j\colon (X,Z)\to X$.
Using Construction \ref{Gysin.10},
we obtain a log Gysin sequence
\[
\Th_{(X\times_S (X,Z),(X,Z))}^{-1}(v^*\Omega_{X/S}^1)
\to
\Th_{X\times_S (X,Z)}^{-1}(u^*\Omega_{X/S}^1)
\to
M_S(X,Z),
\]
where $u$ is the composite of the second projection $X\times_S (X,Z)\to (X,Z)$ and the canonical morphism $(X,Z)\to X$,
and $v$ is induced by $u$.

We have an induced strict closed immersion
\begin{equation}
\label{blow.3.2}
Z\times_S (X,Z)\to (X\times_S (X,Z),(X,Z)),
\end{equation}
which is a ``log compactification'' of
\[
Z\times_X (X-Z) \to (X\times_S (X-Z))-(X-Z).
\]
Hence we obtain a natural null sequence
\begin{equation}
\label{blow.3.1}
\Th_{Z\times_S (X,Z)}^{-1}(w^*\Omega_{X/S}^1)
\to
\Th_{X\times_S (X,Z)}^{-1}(u^*\Omega_{X/S}^1)
\to
M_S(X,Z),
\end{equation}
where $w\colon Z\times_S (X,Z)\to X$ is induced by $u$.

Let $j\colon (X,Z)\to X$ be the canonical morphism.
Now, we have the morphism
\[
\ev\colon M_S(X/Z)\otimes_S \Th_{(X,Z)}^{-1}(\Omega_{(X,Z)/S}^1)\to \unit_S
\]
given by the composite
\begin{align*}
&
M_S(X/Z) \otimes_S \Th_{(X,Z)}^{-1}(\Omega_{(X,Z)/S}^1)
\\
\xrightarrow{\simeq}
&
M_S(X/Z)
\otimes_S
\Th_{(X,Z)}^{-1}(j^*\Omega_{X/S}^1)
\\
\xrightarrow{\simeq} &
\cofib(\Th_{Z\times_S (X,Z)}^{-1}(w^*\Omega_{X/S}^1)
\to
\Th_{X\times_S (X,Z)}^{-1}(u^*\Omega_{X/S}^1))
\\
\xrightarrow{\eqref{blow.3.1}}&
M_S(X,Z)
\to \unit_S,
\end{align*}
where the first morphism is obtained by Proposition \ref{blow.4},
and the last morphism is induced by the structural morphism $(X,Z)\to S$.
By adjunction,
we obtain a morphism
\[
\comp
\colon
\Th_{(X,Z)}^{-1}(\Omega_{(X,Z)/S}^1)
\to
D(X/Z).
\]
\end{const}

The following is the key technical result in this paper:

\begin{prop}
\label{Gysin.13}
Let $S\in \lSch$,
and let $i\colon Z\to X$ be a closed immersion in $\lSm_S^\sat$.
Then there exists a morphism of cofiber sequences
\[
\begin{tikzcd}
\Th_{(X,Z)}^{-1}(\Omega_{(X,Z)/S}^1)\ar[r,"\Gys"]\ar[d,"\comp"']&
\Th_Z^{-1}(\Omega_{X/S}^1)\ar[r]\ar[d,"\comp"]&
\Th_X^{-1}(\Omega_{Z/S}^1)\ar[d,"\comp"]
\\
D(M_S(X/Z))
\ar[r]&
D(M_S(X))\ar[r]&
D(M_S(Z)).
\end{tikzcd}
\]
\end{prop}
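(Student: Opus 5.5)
The plan is to build the whole morphism of cofiber sequences from a single diagram of pairings, rather than checking the two squares one at a time. By adjunction, a map $A\to D(M)$ is the same as a pairing $M\otimes_S A\to \unit_S$. The bottom row is $D$ applied to the cofiber sequence $M_S(Z)\to M_S(X)\to M_S(X/Z)$. The top row is the log Gysin sequence of Construction \ref{Gysin.10} for $i\colon Z\to X$ with $\cE=\Omega_{X/S}^1$, twisted by $\Sigma^{-\Omega^1_{X/S}}$. I read the top row as
\[
\Th^{-1}_{(X,Z)}\to \Th^{-1}_X(\Omega^1_{X/S})\xrightarrow{\Gys}\Th^{-1}_Z(\Omega^1_{Z/S}).
\]
Its first term is identified with $\Th^{-1}_{(X,Z)}(\Omega^1_{(X,Z)/S})$ via Proposition \ref{blow.4}. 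Its last term is identified using $i^*\Omega^1_{X/S}\simeq \Omega^1_{Z/S}\oplus \rN_i^\vee$ and Proposition \ref{Gysin.9}. So a morphism of cofiber sequences amounts to a coherent system consisting of the three pairings $\ev$ (Constructions \ref{ev.1} and \ref{blow.3}), together with homotopies relating them along the maps in the two rows.

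Concretely, I would work on the products $Z\times_S X$, $X\times_S X$ and $X\times_S (X,Z)$, all regarded over the second factor. On each I take the strict closed immersion given by the (log) graph of the diagonal, and also the closed immersion $Z\times_S(\cdot)\to X\times_S(\cdot)$.

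For the right square, I need $\ev_X\circ(M_S(i)\otimes\id)$ to agree with $\ev_Z\circ(\id\otimes \Gys)$ as maps $M_S(Z)\otimes_S\Th^{-1}_X(\Omega^1_{X/S})\to\unit_S$. The first composite is the Gysin map for the diagonal of $X$, pulled back along $Z\times_S X\to X\times_S X$. Lemma \ref{Thom.1} and Proposition \ref{Thom.5} identify this with a Gysin map for the graph $Z\to Z\times_S X$. The second composite is a Gysin map for $Z\to Z\times_S Z$ composed with one for $Z\times_S Z\to Z\times_S X$. The chain $Z\to Z\times_S Z\to Z\times_S X$ lets Theorem \ref{Gysin.12} identify the two, and the lower isomorphism of Theorem \ref{Gysin.11} matches the Thom twists with the splitting $i^*\Omega^1_{X/S}\simeq\Omega^1_{Z/S}\oplus\rN_i^\vee$ used above.

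For the left square, I use that $\ev_{X/Z}$ was defined in Construction \ref{blow.3} through the null sequence \eqref{blow.3.1}. That null sequence comes from the closed immersion \eqref{blow.3.2} inside $X\times_S(X,Z)$. Applying Lemma \ref{Thom.1} and Proposition \ref{Thom.5} to $X\times_S(X,Z)\to X\times_S X$ compares it with the diagonal Gysin map used for $\ev_X$. The compatibility with $M_S(X)\to M_S(X/Z)$ then comes from the fact that the null-homotopy in \eqref{blow.3.1} restricts, on $Z$, to the null-homotopy coming from the right square.

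The main obstacle is coherence. The two squares must assemble into a genuine morphism of cofiber sequences, not merely a pair of separately commuting squares. In particular, the null-homotopy of \eqref{blow.3.1} has to coincide with the one induced by the composite of the two pairings. To handle this, I would upgrade everything to natural transformations, as in the proofs of Theorems \ref{setup.1} and \ref{Gysin.12}. The relevant diagrams would be checked on generators $M_S(V)$ for $V\in\lSm^\sat$, using the functorial cofiber sequences. Before that, I would reduce via Proposition \ref{Gysin.26} and the argument of Theorem \ref{Gysin.25} to the case $S\in\Sch$ with $X,Z\in\Sm_S$. In that case the cube in Theorem \ref{Gysin.12}, applied to $Z\to Z\times_S X\to X\times_S X$, provides the required coherent null-homotopies in one stroke.
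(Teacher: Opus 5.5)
Your architecture matches the paper's. Via adjunction you reduce to a commutative cube of pairings (the paper's \eqref{Gysin.13.1}). The right square comes from naturality of the diagonal Gysin map plus Theorem \ref{Gysin.12} for $Z\to Z\times_S Z\to Z\times_S X$, and the left square comes from the null sequence \eqref{blow.3.1}.

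\textbf{The gap.} It sits exactly at the step you call the main obstacle. Put $A=\Th_Z^{-1}(\Omega_{Z/S}^1)$, $B=\Th_X^{-1}(\Omega_{X/S}^1)$ and $C=\Th_{(X,Z)}^{-1}(\Omega_{(X,Z)/S}^1)$. The cube has two null-homotopies of $M_S(Z)\otimes_S C\to \unit_S$, and one must show they agree:
\begin{itemize}
\item the one through $M_S(X/Z)\otimes_S C$, i.e.\ ultimately from \eqref{blow.3.1};
\item the one through the log Gysin sequence $C\to B\to A$ together with your right-square homotopy.
\end{itemize}
You state this as a ``fact'' and propose to obtain it by two steps, neither of which works.

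First, you reduce to $S\in\Sch$ with $X,Z\in\Sm_S$. The reduction in Theorem \ref{Gysin.25} is Zariski local on $X$ and replaces $S$ by $S'$. It only proves that an already constructed canonical map is an isomorphism. It cannot construct morphisms and homotopies in $\sT(S)$ involving $D(M_S(X))$ and the diagonal of $X$ over $S$, which are global on $X$ and change with $S$. It is also unnecessary, since every input you use is stated over arbitrary $S\in\lSch$.

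Second, you apply Theorem \ref{Gysin.12} to $Z\to Z\times_S X\to X\times_S X$. This is the wrong chain. Its cube is built from the log Gysin sequence of $Z\times_S X\subset X\times_S X$, whose fibre $M_S((X,Z)\times_S X)$ carries the log structure on the first factor and occurs nowhere in the cube. Moreover, no instance of Theorem \ref{Gysin.12}, including the correct chain $Z\to Z\times_S Z\to Z\times_S X$, says anything about the null-homotopy of \eqref{blow.3.1}. That null-homotopy comes from the morphism \eqref{blow.3.2} on $X\times_S(X,Z)$.

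\textbf{How the paper gets the compatibility.} It is built in geometrically. Take the morphism \eqref{blow.3.2} and the analogous factorization of $M_S(Z\times_S(X,Z))=M_S(Z\times_S X,Z\times_S Z)$ through $M_S(Z\times_S X,Z)$, with $Z$ embedded as the graph of $i$; the latter uses Proposition \ref{Gysin.27}. Both are compatible with the maps to $(X\times_S X,X)$. Hence they form one commutative cube of motives, the unlabelled cube in the proof, whose top and back faces carry both null-homotopies at once. The method of Theorem \ref{setup.1} and Construction \ref{Gysin.10} turns this into \eqref{Gysin.13.2}.

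Only then is Theorem \ref{Gysin.12} for $Z\to Z\times_S Z\to Z\times_S X$ invoked. It gives the prism \eqref{Gysin.13.3}, which rewrites the geometric null-homotopy on the back face as $M_S(Z)\otimes_S(C\to B\to A)$ followed by $\Gys\colon M_S(Z)\otimes_S A\to M_S(Z)$. Construction \ref{blow.3} gives the prism \eqref{Gysin.13.4} for the top face. Composing with the structure maps \eqref{Gysin.13.5} yields \eqref{Gysin.13.1}.

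\textbf{A smaller point.} Proposition \ref{Thom.5} needs $g$ log smooth. It applies neither to $i\times\id$ nor to $\id\times u$, since $(X,Z)\to X$ is not log smooth. The naturality you need is base change of the diagonal Gysin map along $i$ (first factor), resp.\ $u$ (second factor). That is Lemma \ref{Thom.1}, or simply functoriality of \eqref{Gysin.0.2} for maps of pairs.
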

\begin{proof}
Let $j\colon (X,Z)\to X$ be the canonical morphism.
For abbreviations,
we set
\[
A:=\Th_Z^{-1}(\Omega_{Z/S}^1),
\text{ }
B:\Th_X^{-1}(\Omega_{X/S}^1),
\text{ }
C:=\Th_{(X,Z)}^{-1}(j^* \Omega_{X/S}^1).
\]
Observe that we have $C\simeq \Th_{(X,Z)}^{-1}(\Omega_{(X,Z)/S}^1)$ by Proposition \ref{blow.4}.
Regard a morphism of cofiber sequences as a morphism of commutative squares.
By adjunction,
it suffices to construct a commutative cube
\begin{equation}
\label{Gysin.13.1}
\begin{tikzcd}[column sep=tiny, row sep=tiny]
&
M_S(Z)\otimes_S C\ar[ld]\ar[rr]\ar[dd]&
&
0\ar[ld]\ar[dd]
\\
M_S(X)\otimes_S C\ar[rr,crossing over]\ar[dd]&
&
M_S(X/Z) \otimes_S C
\\
&
M_S(Z)\otimes_S B\ar[rr,"\Gys",near start]\ar[ld]&
&
M_S(Z)\otimes_S A\ar[ld,"\ev"]
\\
M_S(X)\otimes_S B\ar[rr,"\ev"']&
&
1_S.\ar[uu,crossing over,leftarrow,"\ev" very near start]
\end{tikzcd}
\end{equation}
Using \eqref{blow.3.2},
we obtain a commutative cube.
\[
\begin{tikzcd}[column sep=-0.1em, row sep=tiny]
&
M_S(Z\times_S (X,Z))\ar[ld]\ar[rr]\ar[dd]&
&
0\ar[ld]\ar[dd]
\\
M_S(X\times_S (X,Z))\ar[rr,crossing over]\ar[dd]&
&
M_S\left(\frac{X\times_S (X,Z)}{(X\times_S (X,Z),(X,Z))}\right)
\\
&
M_S(Z\times_S X)\ar[rr]\ar[ld]&
&
M_S\left(\frac{Z\times_S X}{(Z\times_S X,Z)}\right)\ar[ld]
\\
M_S(X\times_S X)\ar[rr]&
&
M_S\left(\frac{X\times_S X}{(X\times_S X,X)}\right),\ar[uu,crossing over,leftarrow]
\end{tikzcd}
\]
where $X$ in $(X\times_S X,X)$ is regarded as the diagonal strict closed subscheme of $X\times_S X$.
We also have natural isomorphisms
\begin{gather*}
M_S\left(\frac{X\times_S (X,Z)}{(X\times_S (X,Z),(X,Z))}\right)
\simeq
\Th_{(X,Z)}(u^*\Omega_{X/S}^1),
\\
M_S\left(\frac{Z\times_S X}{(Z\times_S X,Z)}\right)
\simeq
\Th_X(\Omega_{X/S}^1),
\;
M_S\left(\frac{Z\times_S X}{(Z\times_S X,Z)}\right)
\simeq
\Th_Z(i^*\Omega_{X/S}^1),
\end{gather*}
where $u\colon (X,Z)\to X$ is the canonical morphisms,
and we need Propositions \ref{Gysin.27} and \ref{Gysin.28} for the first one.
Together with the usual method in Theorem \ref{setup.1} and Construction \ref{Gysin.10},
we obtain a commutative cube
\begin{equation}
\label{Gysin.13.2}
\begin{tikzcd}[column sep=tiny, row sep=tiny]
&
M_S(Z)\otimes_S C\ar[ld]\ar[rr]\ar[dd]&
&
0\ar[ld]\ar[dd]
\\
M_S(X)\otimes_S C\ar[rr,crossing over,"\Gys",near end]\ar[dd]&
&
M_S(X,Z)
\\
&
M_S(Z)\otimes_S B\ar[rr,"\Gys",near start]\ar[ld]&
&
M_S(Z)\ar[ld]
\\
M_S(X)\otimes_S B\ar[rr,"\Gys"']&
&
M_S(X).\ar[uu,crossing over,leftarrow]
\end{tikzcd}
\end{equation}
By Theorem \ref{Gysin.12},
we have a commutative prism
\begin{equation}
\label{Gysin.13.3}
\begin{tikzcd}[row sep=tiny]
&
&
&
0\ar[dd]\ar[ld]
\\
M_S(Z)\otimes_S C\ar[rrru,bend left=10]\ar[dd]\ar[rr] &
&
0
\\
&
&
&
M_S(Z)\otimes_S A\ar[ld,"\Gys"]
\\
M_S(Z)\otimes_S B\ar[rr,"\Gys"']\ar[rrru,bend left=10,"\Gys"]&
&
M_S(Z).\ar[uu,leftarrow,crossing over]
\end{tikzcd}
\end{equation}
Construction \ref{blow.3} yields a commutative prism
\begin{equation}
\label{Gysin.13.4}
\begin{tikzcd}[column sep=tiny, row sep=tiny]
&
&
&
0\ar[dd]\ar[ld]
\\
&
&
M_S(X/Z)\otimes_S C
\\
&
M_S(Z)\otimes_S C\ar[ld]\ar[rr]\ar[rruu,bend left=25]&
&
0\ar[ld]
\\
M_S(X)\otimes_S C\ar[rr,"\Gys"']\ar[rruu,bend left=25,crossing over]&
&
M_S(X,Z).\ar[uu,crossing over,leftarrow]
\end{tikzcd}
\end{equation}
We obviously have a commutative triangle
\begin{equation}
\label{Gysin.13.5}
\begin{tikzcd}[column sep=tiny, row sep=small]
&
M_S(Z)\ar[ld]\ar[rd]
\\
M_S(X)\ar[rr]&
&
\unit_S.
\end{tikzcd}
\end{equation}
Assemble \eqref{Gysin.13.2}--\eqref{Gysin.13.5} to obtain \eqref{Gysin.13.1}.
\end{proof}

In the special case of $X=\P^n$ and $Z=\P^{n-1}$,
we have the following result:

\begin{prop}
\label{Gysin.14}
Let $S\in \Sch$.
Then
\[
\ev\colon
M_S(\P^n/\P^{n-1})
\otimes_S
\Th_{(\P^n,\P^{n-1})}^{-1}(\Omega_{(\P^n,\P^{n-1})/S}^1)
\to
\unit_S
\]
is an isomorphism for every integer $n\geq 1$.
\end{prop}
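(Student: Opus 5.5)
Both factors of the source are invertible of the same degree, so the plan is to compute them explicitly and then show that $\ev$ becomes, after these identifications, a unit times the identity. We work over $S\in \Sch$ and write $X=\P^n$, $Z=\P^{n-1}$ (the hyperplane at infinity), $U=X-Z=\A^n$.

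\textbf{Step 1: the left factor.} We identify $M_S(\P^n/\P^{n-1})\simeq \unit_S(n)[2n]$. Zariski excision identifies $\P^n/\P^{n-1}$ with the Thom space of the normal bundle of the point $0\in \P^n$. Concretely, the log Gysin sequence \eqref{Gysin.0.3} for $0\to\P^n$ gives $M_S(\P^n/(\P^n,0))\simeq \Th_S(\A^n)\simeq \unit_S(n)[2n]$. Moreover $(\P^n,0)$ is a $\square$-bundle over $\P^{n-1}$, because its underlying scheme is $\Bl_0\P^n$, a $\P^1$-bundle over $\P^{n-1}$ with boundary a section. By $\square$-invariance, $M_S(\P^{n-1})\to M_S(\P^n,0)$ is then an isomorphism.

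\textbf{Step 2: the right factor.} We identify $\Th_{(\P^n,\P^{n-1})}^{-1}(\Omega^1)\simeq \unit_S(-n)[-2n]$. The log cotangent bundle of $(\P^n,\P^{n-1})$ is trivial of rank $n$, with basis the $dx_i$ on $\A^n$ extended logarithmically. So the Thom space factor is $\Sigma^{-n}$-twisted by a trivial bundle, and
\[
\Th^{-1}(\Omega^1)\simeq M_S(\P^n,\P^{n-1})(-n)[-2n].
\]
As in Step 1, $(\P^n,\P^{n-1})$ is a $\square$-bundle over $\P^{n-1}$ after the dividing-cover identification. One would rather observe that $(\P^n,\P^{n-1})\simeq \square^n$ up to dividing covers: $\P^n$ and $(\P^1)^n$ with their boundaries are related by admissible blow-ups along smooth centers, which are invisible by \cite[Theorem 7.2.10]{logDM}. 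Then $\square$-invariance gives $M_S(\P^n,\P^{n-1})\simeq\unit_S$.

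\textbf{Step 3: identify $\ev$.} The source of $\ev$ is therefore invertible, isomorphic to $\unit_S$, so it suffices to show that $\ev$ is an isomorphism after these identifications. Unwinding Construction \ref{blow.3}, $\ev$ is the composite of a Gysin morphism for the graph $(X,Z)\to X\times_S(X,Z)$, followed by $M_S(X,Z)\to\unit_S$. The second map is an isomorphism by Step 2. For the Gysin part, we restrict along the point $0\in U$ and use the naturality results Lemma \ref{Thom.1} and Proposition \ref{Thom.5}. This reduces the claim to the tautological statement that, for the zero section $S\to\A^n$, the log Gysin morphism $M_S(\A^n/(\A^n,0))\to\Th_S(\A^n)$ is the isomorphism of Theorem \ref{Gysin.25}. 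Alternatively, one can induct on $n$ using the morphism of cofiber sequences of Proposition \ref{Gysin.13}, with the case $n=0$ trivial.

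\textbf{Main obstacle.} The hardest part is Step 3: checking that the Gysin morphism in the construction of $\ev$, after restricting to a point, really matches the canonical Thom isomorphism rather than some unknown endomorphism. I would handle it by choosing the point so that the whole diagram becomes a product with $\A^n$, and then appealing to the multiplicativity of Proposition \ref{Gysin.9} to reduce to $n=1$. For $n=1$ one computes directly on $\square$ and $\P^1$.
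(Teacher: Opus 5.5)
Your overall strategy is the paper's. You restrict the $\Th^{-1}$-factor along an interior point $o$, so that the graph $(\P^n,\P^{n-1})\to\P^n\times_S(\P^n,\P^{n-1})$ becomes the point $o\in\P^n$. You then use the log Gysin sequence for $o$ together with the isomorphism $M_S(\P^{n-1})\simeq M_S(\P^n,o)$ from your Step 1. The genuine problem is Step 2: $\Omega^1_{(\P^n,\P^{n-1})/S}$ is \emph{not} trivial. The forms $dx_i$ keep a simple pole along $D=\P^{n-1}$ even as log forms (for $n=1$ and $y=1/x$ one has $dx=-y^{-1}\,d\log y$). Hence $\Omega^1_{(\P^n,\P^{n-1})/S}\simeq\cO^n(-D)\simeq\cO(-1)^{\oplus n}$, and your formula $\Th^{-1}_{(\P^n,\P^{n-1})}(\Omega^1)\simeq M_S(\P^n,\P^{n-1})(-n)[-2n]$ does not follow from the argument you give.

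The missing input is Proposition \ref{blow.5}: on $(X,D)$ the inclusion $\cE(-D)\to\cE$ induces an isomorphism of Thom spaces. Apply it to $\cE=\cO^n$ and combine it with $(\P^n,\P^{n-1})$-invariance (your comparison with $\square^n$, or \cite[Proposition 7.3.1]{logDM}). This shows that the map $\Th_S^{-1}(\cO^n)\to\Th^{-1}_{(\P^n,\P^{n-1})}(\Omega^1_{(\P^n,\P^{n-1})/S})$ induced by $o$ is an isomorphism, which is Lemma \ref{Gysin.15}. Your Step 3 needs exactly this. Knowing only that the factor is abstractly invertible of the right weight does not let you test $\ev$ after restricting along $o$; you need the restriction map itself to be an isomorphism.

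Once that lemma is in place, the ``main obstacle'' you describe disappears. After restriction along $o$, the null sequence \eqref{blow.3.1} becomes $\Th^{-1}_{\P^{n-1}}(\cO^n)\to\Th^{-1}_{\P^n}(\cO^n)\to\unit_S$. Its second map is the log Gysin morphism for $o\in\P^n$. Its first map factors through the fiber $\Th^{-1}_{(\P^n,o)}(\cO^n)$ via $\P^{n-1}\to(\P^n,o)$, which is an isomorphism by your Step 1. So the induced map from the cofiber to $\unit_S$ is an isomorphism, and no reduction to $n=1$ via Proposition \ref{Gysin.9} is needed.

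Your alternative of inducting with Proposition \ref{Gysin.13} is circular. To extract the statement for $\P^n/\P^{n-1}$ from that morphism of cofiber sequences, you must already know that the comparison morphism for $\P^n$ itself is an isomorphism. Proposition \ref{Gysin.21} deduces exactly that from the present statement.
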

\begin{proof}
Consider the inclusion $i\colon S\to (\P^n,\P^{n-1})$ at $o:=[0:\cdots:0:1]\in \P^n$.
By Lemma \ref{Gysin.15} below,
it suffices to show that the composite morphism
\begin{align*}
 &M_S(\P^n/\P^{n-1})
\otimes_S
\Th_S^{-1}(\cO^n)
\\
\to &
M_S(\P^n/\P^{n-1})
\otimes_S
\Th_{(\P^n,\P^{n-1})}^{-1}(\Omega_{(\P^n,\P^{n-1})/S}^1)
\to
\unit_S
\end{align*}
is an isomorphism.
This is homotopic to the composite
\begin{align*}
& M_S(\P^n/\P^{n-1})
\otimes_S
\Th_{S}^{-1}(\cO^n)
\\
\xrightarrow{\simeq} &
\cofib(
\Th_{\P^{n-1}}^{-1}(\cO^n)
\to
\Th_{\P^n}^{-1}(\cO^n))
\to
\unit_S,
\end{align*}
where the second morphism is obtained by the log Gysin sequence
\[
\Th_{(\P^n,\{o\})}^{-1}(\cO^n) \to\Th_{\P^n}^{-1}(\cO^n ) \to \unit_S
\]
and the morphism
\begin{equation}
\label{Gysin.14.1}
\Th_{\P^{n-1}}^{-1}(\cO^n ) \to\Th_{(\P^n,\{o\})}^{-1}(\cO^n)
\end{equation}
induced by the strict closed immersion $\P^{n-1}\to (\P^n,\{o\})$.
We conclude since \eqref{Gysin.14.1} is an isomorphism by \cite[Lemma 7.1.9, Proposition 7.1.10]{logDM}.
\end{proof}

\begin{lem}
\label{Gysin.15}
Let $S\in \Sch$.
Then the morphism
\[
\Th_S^{-1}(\cO)
\to
\Th_{(\P^n,\P^{n-1})}^{-1}(\Omega_{(\P^n,\P^{n-1})/S}^1)
\]
induced by the closed immersion $S\to (\P^n,\P^{n-1})$ at $o:=[0:\cdots:0:1]\in \P^n$ is an isomorphism.
\end{lem}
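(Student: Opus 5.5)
The plan is to identify both sides with $\unit_S(-n)[-2n]$, compatibly with the closed immersion $i\colon S\to(\P^n,\P^{n-1})$ at $o$. Here $\Th_S^{-1}(\cO)$ is read as $\Th_S^{-1}(\cO^n)=\Th_S^{-1}(i^*\Omega^1_{(\P^n,\P^{n-1})/S})$, as in the proof of Proposition \ref{Gysin.14}. Write $Y:=(\P^n,\P^{n-1})$, $H:=\P^{n-1}$, and $p\colon Y\to S$ for the structural morphism. The idea is to trivialize the log cotangent bundle up to a boundary twist, remove the twist with Proposition \ref{blow.5}, and then use that $M_S(Y)\simeq \unit_S$.

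\emph{Step 1: the log cotangent bundle.} Let $x_1,\ldots,x_n$ be coordinates on $\A^n=\P^n-H$ centred at $o$. We have the standard isomorphism
\[
\Omega^1_{Y/S}\simeq \cO(-1)^{\oplus n}=\cO^{\oplus n}(-H).
\]
Concretely, the sections $dx_1,\ldots,dx_n$ on $\A^n$ extend to global sections of $\Omega^1_{Y/S}(H)$. In a chart near $H$ with coordinate $y=1/x_1$, we have $dx_1=-y^{-2}dy=-y^{-1}\,(dy/y)$, and the sections $d(x_j)=d(x_j'/y)$ likewise have a simple pole along $H$ when written in log differentials. This gives a frame of $\Omega^1_{Y/S}(H)$, which I would check chart by chart exactly as in the proof of Proposition \ref{blow.4}. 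Restricting along $i$ gives the frame $dx_1,\ldots,dx_n$ of $i^*\Omega^1_{Y/S}\simeq\cO^n$.

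\emph{Step 2: removing the twist.} Apply Proposition \ref{blow.5} with $X=\P^n$, $D=H$ and $\cE=\cO^n$ on $Y$. The inclusion $\cE(-D)\simeq\Omega^1_{Y/S}\hookrightarrow\cO^n$ then induces an isomorphism $\Th_Y(\Omega^1_{Y/S})\simeq\Th_Y(\cO^n)\simeq\unit_Y(n)[2n]$. This inclusion is an isomorphism away from $H$, in particular at $o$. Its pullback along $i$ is therefore the identity of $\Th_S(\cO^n)$, using the compatibility of Thom spaces with pullback coming from monoidality of $i^*$. Hence
\[
\Th_Y^{-1}(\Omega^1_{Y/S})\simeq p_\sharp p^*\unit_S(-n)[-2n]\simeq M_S(Y)(-n)[-2n],
\]
and under this identification the morphism of the statement becomes $M_S(i)(-n)[-2n]\colon \unit_S(-n)[-2n]\to M_S(Y)(-n)[-2n]$.

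\emph{Step 3: contractibility.} It remains to show $M_S(i)\colon \unit_S\to M_S(Y)$ is an isomorphism. Since $p\circ i=\id$, it suffices that $M_S(p)$ is an isomorphism. This is the fact $M_S(\P^n,\P^{n-1})\simeq \unit_S$ already used in Step 4 of the proof of Proposition \ref{blow.5}, and it comes from \cite[Lemma 7.1.9, Proposition 7.1.10]{logDM}. Alternatively, $(\P^n,\P^{n-1})$ and $\square^n$ are related by admissible blow-ups and dividing covers, so one can conclude by div-invariance and $\square$-invariance.

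The main obstacle I expect is bookkeeping rather than geometry. One must check that the composite of the identifications in Steps 1--2 really is the morphism induced by $i$, i.e.\ that the isomorphism of Proposition \ref{blow.5} commutes with restriction along $i$, which lands away from $D$. I would handle this by naturality of Proposition \ref{blow.5} in Zariski localization: first restrict to the open $\A^n\ni o$, where the inclusion $\cE(-D)\to\cE$ is the identity.
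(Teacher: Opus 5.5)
Your proposal is correct and is essentially the paper's proof. Both identify $\Omega^1_{(\P^n,\P^{n-1})/S}$ with $\cO^n(-D)$, remove the twist by Proposition \ref{blow.5}, and conclude by $(\P^n,\P^{n-1})$-invariance, which you phrase as $M_S(\P^n,\P^{n-1})\simeq\unit_S$ after the projection formula while the paper cites it directly as $\Th^{-1}_{(\P^n,\P^{n-1})}(\cO^n)\simeq\Th^{-1}_S(\cO^n)$. The compatibility at $o$ that you flag as the main obstacle is automatic: the map $i^*\cF\to p_\sharp\cF$ is natural in $\cF\in\sT(\P^n,\P^{n-1})$, so the isomorphism of Proposition \ref{blow.5} transports the claim regardless of what it restricts to at $o$.
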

\begin{proof}
Identify $\Omega_{(\P^n,\P^{n-1})/S}^1$ with $\cO^n(-D)$,
where $D:=\P^{n-1}$.
By the $(\P^n,\P^{n-1})$-invariance \cite[Proposition 7.3.1]{logDM},
we have an isomorphism
\[
\Th_{(\P^n,\P^{n-1})}^{-1}(\cO^n)
\simeq
\Th_S^{-1}(\cO^n)
\]
in $\sT(S)$.
Hence it suffices to show that the morphism
\[
\Th_{(\P^n,\P^{n-1})}(\cO^n(-D))
\to
\Th_{(\P^n,\P^{n-1})}(\cO^n)
\]
in $\sT(\P^n,\P^{n-1})$ induced by the canonical inclusion $\cO(-D)\to \cO$ is an isomorphism.
This is a consequence of Proposition \ref{blow.5}.
\end{proof}

Let us discuss two results on dualizable objects and $\mathfrak{p}_f$.

\begin{prop}
\label{Gysin.22}
Let $f\colon X\to S$ be a strict smooth separated morphism of fs log schemes,
and let $\cF\in \sT(X)$ be a dualizable object.
Assume that $\mathfrak{p}_f$ is an isomorphism.
Then $f_\sharp \cF$ is dualizable with dual $f_* D\cF$.
\end{prop}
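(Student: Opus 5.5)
We show directly that $f_\sharp\cF$ represents the functor $\ul{\Hom}_S(f_\sharp\cF,-)$ in the form $f_*D\cF\otimes_S(-)$. In a closed symmetric monoidal stable $\infty$-category, an object $M$ is dualizable precisely when the canonical map
\[
D M\otimes_S \cG\to \ul{\Hom}_S(M,\cG)
\]
is an isomorphism for every $\cG$ (the case $\cG=M$ produces the coevaluation). We apply this with $M=f_\sharp\cF$ and first identify both sides with explicit functors of $\cG$.

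For the right-hand side, adjunction and dualizability of $\cF$ give
\[
\ul{\Hom}_S(f_\sharp\cF,\cG)\simeq f_*\ul{\Hom}_X(\cF,f^*\cG)\simeq f_*(D\cF\otimes_X f^*\cG).
\]
For the left-hand side, take $\cG=\unit_S$ to get $D(f_\sharp\cF)\simeq f_*D\cF$. This identifies the candidate dual with $f_*D\cF$, and the map to be shown invertible becomes the exchange morphism
\[
\Ex\colon f_*D\cF\otimes_S\cG\to f_*(D\cF\otimes_X f^*\cG).
\]
So it suffices to show that this projection-formula map for $f_*$ is an isomorphism for every $\cF'\in\sT(X)$, in particular for $\cF'=D\cF$.

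The exchange map for $f_*$ is handled by transporting it to the projection formula for $f_\sharp$ using $\mathfrak p_f$. By Proposition \ref{Gysin.23}, applied to $\cF'':=\Sigma^{-\Omega_f^1}\cF'$, there is a commutative square. Its top and bottom rows are $\mathfrak p_f$, which is invertible by hypothesis; the horizontal identification $\Sigma^{\Omega_f^1}(\cF''\otimes f^*\cG)\simeq \Sigma^{\Omega_f^1}\cF''\otimes f^*\cG$ is also invertible. Its left vertical arrow is the $\lSm^\sat$ projection formula $\Ex$ for $f_\sharp$, which is an isomorphism by the axioms on $\sT$ because strict smooth morphisms are saturated log smooth. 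The right vertical arrow is exactly $\Ex\colon f_*\Sigma^{\Omega_f^1}\cF''\otimes_S\cG\to f_*(\Sigma^{\Omega_f^1}\cF''\otimes_X f^*\cG)$. By two-out-of-three this arrow is an isomorphism. Since $\Sigma^{\Omega_f^1}$ is an equivalence, every $\cF'$ arises as $\Sigma^{\Omega_f^1}\cF''$, so the exchange map is an isomorphism for all $\cF'$.

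The step requiring the most care is checking that the identification $D(f_\sharp\cF)\otimes\cG\to\ul{\Hom}(f_\sharp\cF,\cG)$, after the identifications above, really agrees with the exchange map $\Ex$, so that the comparison is not merely an abstract isomorphism. This is a standard compatibility between the adjunction $(f^*,f_*)$ and the closed monoidal structure, and it is checked by unwinding the units and counits involved. Once it holds, $f_\sharp\cF$ is dualizable with dual $f_*D\cF$.
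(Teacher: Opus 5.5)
Your proposal is correct and follows the paper's proof. Both arguments identify $\ul{\Hom}_S(f_\sharp\cF,-)\simeq f_*(D\cF\otimes_X f^*(-))$ by adjunction and dualizability of $\cF$, and then use Proposition \ref{Gysin.23} (with $\mathfrak{p}_f$ invertible and the $\lSm^\sat$ projection formula) to identify this with $f_*D\cF\otimes_S(-)$. The paper leaves implicit three things you spell out: the two-out-of-three argument, the twist by $\Sigma^{-\Omega_f^1}$, and the compatibility with the canonical map $D(f_\sharp\cF)\otimes_S\cG\to\ul{\Hom}_S(f_\sharp\cF,\cG)$.
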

\begin{proof}
We have natural isomorphisms
\[
\ul{\Hom}_S(f_\sharp \cF,-)
\simeq
f_*(D\cF\otimes_X f^*(-))
\simeq
f_*D \cF \otimes_S (-),
\]
where the second isomorphism is due to Proposition \ref{Gysin.23}.
Whence the result follows.
\end{proof}

\begin{prop}
\label{alpha.3}
Let $f\colon X\to S$ be a strict smooth separated morphism of fs log schemes.
Then $\mathfrak{p}_f$ is an isomorphism if and only if the following conditions are satisfied:
\begin{enumerate}
\item[\textup{(i)}]
$M_S(X)$ is dualizable.
\item[\textup{(ii)}]
The comparison morphism
\[
\comp\colon \Th_X^{-1}(\Omega_{X/S}^1)
\to
D(M_S(X))
\]
is an isomorphism.
\end{enumerate}
\end{prop}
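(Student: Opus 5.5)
Both implications go through Proposition \ref{Gysin.22}, together with a comparison of $\mathfrak{p}_f$ evaluated at $\unit_X$ with the comparison morphism of Construction \ref{ev.1}.

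For ``only if'': apply Proposition \ref{Gysin.22} to $\cF=\unit_X$, which is dualizable. This shows that $M_S(X)=f_\sharp\unit_X$ is dualizable with dual $f_*\unit_X$, giving (i). For (ii), note that $\mathfrak{p}_f$ at $\Th_X(\Omega_f^1)^{-1}$ yields an isomorphism
\[
\Th_X^{-1}(\Omega_f^1)=f_\sharp\Th_X(\Omega_f^1)^{-1}\xrightarrow{\simeq} f_*\unit_X\simeq D(M_S(X)).
\]
The last isomorphism is the adjunction identity $D(f_\sharp\unit_X)=\ul{\Hom}_S(f_\sharp\unit_X,\unit_S)\simeq f_*\unit_X$. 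It then remains to check that this composite agrees with $\comp$. Under adjunction, $\comp$ corresponds to $\ev$. Unwinding, $\ev$ is the projection-formula isomorphism $M_S(X)\otimes_S\Th_S^{-1}(\Omega_f^1)\simeq (fp_2)_\sharp p_1^*\Th^{-1}$, followed by the diagonal Gysin map and then $f_\sharp\unit_X\to\unit_S$. The map $\mathfrak{p}_f$ is built from $f_\sharp p_{1*}a_*\to f_*p_{2\sharp}a_*$, the diagonal Gysin map, and the counit. I would show the two agree by passing to the adjoint form $f^*f_\sharp\Th^{-1}\simeq p_{2\sharp}p_1^*\Th^{-1}\to\unit_X$. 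Here one uses the $\lSm^\sat$ base change, Proposition \ref{Gysin.24} (compatibility of Gysin maps with the projection formula), and the standard identification of the exchange map $f_\sharp p_{1*}\to f_*p_{2\sharp}$ via unit and counit, as in \cite[Proposition 1.7.5]{Ayo071}. Matching this diagram is the main obstacle. The content is entirely formal, but the bookkeeping of exchange transformations must be done carefully, much as in Proposition \ref{Gysin.23}.

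For ``if'': assume (i) and (ii). Both $f_\sharp(-)$ and $f_*\Sigma^{\Omega_f^1}(-)$ preserve colimits. The first is a left adjoint. For the second, $f_*$ preserves colimits because $M_S(X)$ is dualizable: $\hom(M_Y(V),f_*\cF)\simeq\hom(M_X(V\times_SY\times_S X\ldots))$; more cleanly, one uses that $f^*$ preserves compact generators $M_S(Y)(n)[m]$, pulled back to $M_X(Y\times_SX)(n)[m]$. So it suffices to test $\mathfrak{p}_f$ on generators $M_X(V)(n)[m]$ with $V\in\lSm_X^\sat$. I would instead reduce via Proposition \ref{Gysin.23}: for $\cF\in\sT(X)$, write $\cF$ as a colimit of objects $g_\sharp\unit_V$ with $g\colon V\to X$, twisted. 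Rather than handling general $V$, I would use Proposition \ref{Thom.2} with base change along $S'=V'$, or more simply reduce to objects of the form $\cF\otimes f^*\cG$. The cleanest route is to note that $\mathfrak{p}_f$ at $\Th_X(\Omega_f^1)^{-1}\otimes f^*\cG$ is identified, by Proposition \ref{Gysin.23}, with $\mathfrak{p}_f(\Th^{-1})\otimes\cG$. By the first part of the argument this is $\comp\otimes\cG$, which is an isomorphism by (ii).

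To cover all of $\sT(X)$, I would then use that $\mathfrak{p}_f$ is $f_\sharp$-linear. Concretely, for $\cF\in\sT(X)$, the map $\mathfrak{p}_f$ at $\cF$ corresponds under $\hom_S(\cG,-)$ to a map $\hom_X(f^*\cG\otimes\Th(\Omega_f^1)^{-1}\ldots)$. Using dualizability (i), $\hom_S(\cG,f_*\Sigma\cF)\simeq\hom_X(f^*\cG,\Sigma\cF)$, while $\hom_S(\cG,f_\sharp\cF)$ is identified through $D(M_S(X))\otimes\cG$. This gives the isomorphism by Yoneda. If this direct route gets messy, a fallback is to prove the isomorphism on the generators $g_\sharp\unit_V$ via Proposition \ref{Thom.2} and Proposition \ref{Thom.5}, reducing to the already-settled case over the base $V$.
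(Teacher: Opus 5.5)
\paragraph{The ``only if'' half.}
This half is fine and agrees with the paper. You get (i) from Proposition \ref{Gysin.22} applied to $\unit_X$, and (ii) by identifying $\mathfrak{p}_f$ at $\Th_X(\Omega_f^1)^{-1}$ with $\comp$.

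\paragraph{The gap in the ``if'' half.}
From (i), (ii) and Proposition \ref{Gysin.23} you only get that $\mathfrak{p}_f$ is invertible on objects $\Th_X(\Omega_f^1)^{-1}\otimes_X f^*\cG$. But $\sT(X)$ is generated by the objects $M_X(V)=v_\sharp\unit_V$ for $v\colon V\to X$ in $\lSm_X^\sat$, and these are not of that form. Neither of your extensions closes this.
\begin{itemize}
\item In the Yoneda route, $\hom_S(\cG,f_*\Sigma^{\Omega_f^1}\cF)\simeq\hom_X(f^*\cG,\Sigma^{\Omega_f^1}\cF)$ is just adjunction and uses nothing from (i). Nothing lets you compute $\hom_S(\cG,f_\sharp\cF)$ ``through $D(M_S(X))\otimes_S\cG$'' for general $\cF$; that computation is the claim itself.
\item In the fallback, Proposition \ref{Thom.2} produces its exchange isomorphisms only once $\mathfrak{p}$ is already known to be invertible. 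Knowing $\mathfrak{p}$ for $V\times_S X\to V$ on pulled-back objects does not transport to $\mathfrak{p}_f(v_\sharp\unit_V)$ unless you can commute $\mathfrak{p}$ past a $\sharp$-pushforward.
\item Rewriting $v_\sharp\unit_V$ through the graph of $v$ via Theorem \ref{local.1} is circular. That theorem rests on Proposition \ref{Gysin.21}, which rests on the present statement.
\item Your claim that $f_*$ preserves colimits uses compactness of the generators, which is not among the axioms on $\sT$.
\end{itemize}

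\paragraph{A route that closes the gap: triangle identities.}
Put $R:=f_\sharp\Sigma^{-\Omega_f^1}$ and let $\epsilon\colon f^*R\to\id$ be the diagonal Gysin map, i.e.\ the mate of $\mathfrak{p}_f$.
\begin{enumerate}
\item Use (i) and (ii) to define $\eta\colon\unit_S\to\Th_X^{-1}(\Omega_f^1)\simeq Rf^*\unit_S$ as $\comp^{-1}$ composed with the unit $\unit_S\to f_*\unit_X\simeq D(M_S(X))$. Extend it to all $\cG$ by $\eta_\cG:=\eta\otimes\cG$.
\item Since $\comp$ is the $(f^*,f_*)$-adjoint of $\epsilon$ at $\Th_X(\Omega_f^1)^{-1}$ (your ``bookkeeping'' step), the first triangle identity $\epsilon_{f^*\cG}\circ f^*\eta_\cG=\id$ is formal, using Proposition \ref{Gysin.24}.
\item Consequently the map $\psi\mapsto\epsilon_\cF\circ f^*\psi$ from $\hom_S(\cG,R\cF)$ to $\hom_X(f^*\cG,\cF)$ has a section. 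The other composite is postcomposition with $\theta_\cF:=R\epsilon_\cF\circ\eta_{R\cF}$. So it suffices that each $\theta_\cF$ be invertible.
\item Since $R$ preserves colimits, it is enough to check this for $\cF=v_\sharp\unit_V$, up to shifts and twists.
\item For such $\cF$, Proposition \ref{Thom.5} and Lemma \ref{Thom.1} identify $\epsilon_{v_\sharp\unit_V}$ with the Gysin map of the graph $V\to V\times_S X$. They then identify $\theta_{v_\sharp\unit_V}$ with $(fv)_\sharp$ of $v^*(\epsilon_{\unit_X}\circ f^*\eta)\otimes_V\Sigma^{-v^*\Omega_f^1}\unit_V$. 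This is the identity by the first triangle identity.
\end{enumerate}
Hence $R$ is right adjoint to $f^*$ with counit $\epsilon$, and $\mathfrak{p}_f$ is invertible.

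The paper does not write this out; it delegates exactly this step to the cited argument of \cite{AHI}. Since your proposal attempts the step directly, it needs an argument of this kind.
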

\begin{proof}
Repeat \cite[Proof of Proposition 5.7]{AHI} (see also \cite[Proof of Proposition 1.7.16]{Ayo071} and \cite[Proof of Theorem 2.4.42]{CD19}) to show that under assuming (i), $\mathfrak{p}_f$ is an isomorphism if and only if (ii) is satisfied.
To conclude,
observe that (i) is satisfied if $\mathfrak{p}_f$ is an isomorphism by Proposition \ref{Gysin.22}.
\end{proof}

Now, we are ready to prove the main result of this section.

\begin{prop}
\label{Gysin.21}
Let $S\in \lSch$,
and let $p\colon \P^n\to S$ be the projection with an integer $n\geq 0$.
Then $\mathfrak{p}_p$ is an isomorphism.
\end{prop}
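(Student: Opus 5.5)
We reduce to $S\in\Sch$ and then argue by induction on $n$ via Proposition \ref{alpha.3}. For the reduction, let $q\colon S\to\ul{S}$ remove the log structure. Since $\P^n_S=\P^n_{\ul S}\times_{\ul S}S$ and $p$ is strict smooth separated, the first commutative diagram of Proposition \ref{Thom.2} identifies $q^*\mathfrak{p}_{p_{\ul S}}$ with $\mathfrak{p}_{p_S}$ up to the exchange isomorphisms. The relevant exchange $\Ex\colon f'_\sharp g'^*\to g^*f_\sharp$ is invertible by $\lSm^\sat$ base change. For the other exchange, $g^*f_*\to f'_*g'^*$, I would avoid proving invertibility for general $g$. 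Instead, test $\mathfrak{p}_{p_S}$ against generators $M_S(V)(m)[k]$ of $\sT(\P^n_S)$ and use dualizability of $M(\P^n)$, which is base-change stable. In practice it is cleaner to prove the statement in the equivalent form (i)+(ii) of Proposition \ref{alpha.3}. Dualizability and invertibility of $\comp$ are preserved by the symmetric monoidal functor $q^*$, and $q^*$ commutes with $M(\P^n)$, with $\Th^{-1}$ and with the Gysin maps (Lemma \ref{Thom.1}). So it suffices to treat $S\in\Sch$.

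Over $S\in\Sch$ we induct on $n$; the case $n=0$ is trivial since $\mathfrak{p}_{\id}=\id$ up to the canonical identifications. For $n\ge1$, apply Proposition \ref{Gysin.13} to the closed immersion $\P^{n-1}\to\P^n$. This gives a morphism of cofiber sequences whose right-hand vertical map is $\comp$ for $\P^{n-1}$. After the identification of Proposition \ref{blow.4}, the left-hand vertical map is $\comp\colon\Th^{-1}_{(\P^n,\P^{n-1})}(\Omega^1)\to D(M_S(\P^n/\P^{n-1}))$.

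By induction and Proposition \ref{alpha.3}, $M_S(\P^{n-1})$ is dualizable and its $\comp$ is an isomorphism. By Proposition \ref{Gysin.14}, the evaluation morphism for $M_S(\P^n/\P^{n-1})$ is an isomorphism with source $M_S(\P^n/\P^{n-1})\otimes_S\Th^{-1}_{(\P^n,\P^{n-1})}(\Omega^1)$ and target $\unit_S$. Hence $M_S(\P^n/\P^{n-1})$ is invertible, so it is dualizable and its adjoint $\comp$ is an isomorphism. Since dualizable objects are closed under cofibers, $M_S(\P^n)$ is dualizable. The five lemma applied to the middle vertical map of Proposition \ref{Gysin.13} then shows that $\comp$ for $\P^n$ is an isomorphism, and Proposition \ref{alpha.3} yields that $\mathfrak{p}_p$ is an isomorphism.

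I expect the main obstacle to be the base-change reduction in the first step. The difficulty is verifying that the comparison maps of Construction \ref{ev.1} are compatible with $q^*$. I would do this via \eqref{Thom.1.1} applied to the diagonal, together with the compatibility of $\ev$ with monoidal pullback. Alternatively, one could avoid the reduction by noting that Propositions \ref{Gysin.14} and \ref{Gysin.15} only use the invariance results of \cite{logDM}, which should hold over any $S\in\lSch$ after the usual reduction of Theorem \ref{Gysin.25}.
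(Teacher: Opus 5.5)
Your proposal is correct and follows essentially the same route as the paper: pass to conditions (i) and (ii) of Proposition \ref{alpha.3}, reduce to $S\in\Sch$ via the symmetric monoidal functor $q^*$, and induct on $n$ using Proposition \ref{Gysin.13} for $\P^{n-1}\to\P^n$ together with Proposition \ref{Gysin.14}. The only difference is minor: you deduce dualizability of $M_S(\P^n/\P^{n-1})$ from the invertibility of $\ev$ in Proposition \ref{Gysin.14}, whereas the paper cites $M_S(\P^n/\P^{n-1})\simeq\unit_S(n)[2n]$; both work, and your initial detour through Proposition \ref{Thom.2} is unnecessary.
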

\begin{proof}
By Proposition \ref{alpha.3},
it suffices to show that $M_S(\P^n)$ is dualizable and the comparison morphism
\[
\comp\colon \Th_{\P^n}^{-1}(\Omega_{\P^n/S}^1)
\to
D(M_S(\P^n))
\]
is an isomorphism.
Consider the morphism $q\colon S\to \ul{S}$ removing the log structure.
Since $q^*$ is monoidal,
it suffices to show the claim for $\ul{S}$,
so we may assume $S\in \Sch$.

We proceed by induction on $n$.
The claim is obvious if $n=0$.
Assume $n>0$.
Note that $M_S(\P^n/\P^{n-1})\in\sT(S)$ is dualizable since it is isomorphic to $\unit_S(n)[2n]$ by \cite[Proposition 3.2.7]{logSH}.
Apply Proposition \ref{Gysin.13} to $\P^{n-1}\to \P^n$.
By induction,
it suffices to show that the comparison morphism
\[
\comp
\colon
\Th_{(\P^n,\P^{n-1})}^{-1}(\Omega_{(\P^n,\P^{n-1})/S}^1)
\to
D(M_S(\P^n/\P^{n-1}))
\]
is an isomorphism.
This is a consequence of Proposition \ref{Gysin.14}.
\end{proof}

\section{Poincar\'e duality}
\label{purity}

We keep working with $\sT$ in \S \ref{sat}.
The following is a non $\A^1$-invariant analogue of \cite[(1.36)]{Ayo071}.

\begin{thm}
\label{local.1}
Let $S\in \lSch$,
and 
let $i\colon Z\to X$ be a strict closed immersion in $\lSm_S^\sat$,
and let $f\colon X\to S$ be the structural morphism.
Then the composite natural transformation
\[
f_\sharp i_*
\xrightarrow{\Gys}
(fi)_\sharp \Sigma^{\rN_i}i^*i_*
\xrightarrow{\ad'}
(fi)_\sharp \Sigma^{\rN_i}
\]
is an isomorphism.
\end{thm}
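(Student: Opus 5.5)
Both sides of the natural transformation commute with colimits: $f_\sharp$ and $(fi)_\sharp$ are left adjoints, $i^*$ and $\Sigma^{\rN_i}$ are equivalences or left adjoints, and $i_*$ preserves colimits. For $i_*$ this holds because $i$ is a strict closed immersion and $\sT(Z)$ is generated by the compact-type objects $M_Z(W)$ with $W\in \lSm_Z^\sat$. If this colimit-preservation is not directly available, I would instead test the claim against all objects by adjunction, using the right adjoints. Granting it, and using the generation axiom, it suffices to check that the transformation is an isomorphism after evaluating at $M_Z(W)(n)[m]$ for $W\in\lSm_Z^\sat$. Since all functors involved commute with Tate twists (via the projection formula and Proposition \ref{Gysin.24}), we may drop the twists. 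The question is also Zariski local on $X$: for an open $U\subset X$, both sides restrict compatibly by Proposition \ref{Thom.5}, since open immersions are strict smooth. Zariski descent for $\sT(X)$ then reduces us to small $X$.

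Next I would reduce to strict smooth situations. By Proposition \ref{Gysin.26}, Zariski locally there is a factorization $X\to S'\to S$ with $X,Z$ strict smooth over $S'$. Because $S'\to S$ is in $\lSm^\sat$, the transformation for $S$ is obtained from the one for $S'$ by applying $(S'\to S)_\sharp$; this compatibility of the log Gysin morphism is again \eqref{Thom.5.1}. So we may assume $X,Z$ strict smooth separated over $S$. Locally, as in the proof of Proposition \ref{Gysin.7}, I would choose an \'etale neighbourhood $X'\to X$ with $X'\times_X Z\simeq Z$ and an \'etale map $X'\to Z\times\A^c$ identifying $Z$ with the zero section. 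The strict Nisnevich square $(X'-Z\to X', X-Z\to X)$ shows that $f_\sharp i_*$ depends only on $X'$, because $i_*$-objects restrict to zero on $X-Z$ (localization for open complements). I would verify this vanishing directly from the sheaf condition, since full localization is not assumed. The right-hand side visibly depends only on $Z$ and $\rN_i$. Compatibility of $\Gys$ with \'etale pullback is \eqref{Thom.5.2}. Hence we reduce to the zero section $i\colon Z\to Z\times\A^c=:X$.

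In this case, write $X=\rN_i$ and $f=ph$, where $p\colon X\to Z$ is the projection. Then $f_\sharp i_* \simeq h_\sharp p_\sharp i_*$, and we must show $p_\sharp i_*\to \Sigma^{\rN_i}$ is an isomorphism. Here I would compactify $X$ inside $\P^c_Z$, with $i$ the inclusion of $o$, and use Proposition \ref{Thom.3} together with Proposition \ref{Gysin.21}: $\mathfrak{p}_q$ is an isomorphism for $q\colon \P^c_Z\to Z$, and $\mathfrak{p}_{\id}$ is trivially an isomorphism. The commutative square of Proposition \ref{Thom.3}, applied to $\P^c_Z\supset Z$, has three isomorphism sides, because $\mathfrak{p}_q$, $\mathfrak{p}_{qi}=\mathfrak{p}_{\id}$ and the bottom map are isomorphisms. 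Therefore the top composite $q_\sharp i_*\to\Sigma^{\rN_i}$ is an isomorphism. Finally, restricting from $\P^c_Z$ to the open $\A^c_Z$ does not change $q_\sharp i_*$, by the localization-type vanishing above combined with Zariski descent. This gives the claim.

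The main obstacle is the localization-type step: showing that $f_\sharp i_*$ only depends on a neighbourhood of $Z$, i.e. $\cofib(M_S(U)\otimes\cdots)$ vanishing off $Z$, without a general localization theorem. I would first try to deduce it from strict Nisnevich descent applied to $i_*\cF$, using that $i_*\cF$ restricted to $X-Z$ is zero by the base change $(X-Z)\times_X Z=\emptyset$ and the $\lSm^\sat$ base change formula for open immersions.
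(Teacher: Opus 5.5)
Your argument is correct, and it follows the paper's reduction skeleton: locality on $X$, reduction to strict smooth $f$ via Proposition~\ref{Gysin.26}, the Morel--Voevodsky reduction to the zero section $Z\to Z\times\A^c$, passage to $Z\times\P^c$, and replacing $S$ by $Z$. The final step is where you genuinely differ.

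In the case of a section $i$ of $f\colon\P^s_S\to S$, the paper runs a direct diagonal argument. It uses that $\mathfrak{p}_f$ is invertible (Proposition~\ref{Gysin.21}). It uses that the exchange maps $f_\sharp p_{1*}a_*\to f_*p_{2\sharp}a_*$ and $p_{2\sharp}i'_*\to i_*f_\sharp$ are invertible (Propositions~\ref{Thom.2} and \ref{Gysin.21}). It then uses the square coming from \eqref{Thom.1.2} to compare the Gysin map of the diagonal $a$, restricted along $i'$, with that of $i$.

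You instead apply Proposition~\ref{Thom.3} to the section $i\colon Z\to\P^c_Z$ over the base $Z$. Since $qi=\id$, the maps $\mathfrak{p}_q$, $\mathfrak{p}_{\id}$ and the bottom map of that square are invertible, so the top composite is too. For $\mathfrak{p}_{\id}$, note that $(Z,Z)=\emptyset$, so the relevant Gysin map is invertible. This is legitimate, because Proposition~\ref{Thom.3} is proved without Theorem~\ref{local.1}; the paper only uses it in the opposite direction, in Theorem~\ref{local.2}. Your route is shorter, since the paper's last diagram is essentially a special case of what Proposition~\ref{Thom.3} already packages.

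Similarly, in the steps where $Z$ lies entirely inside the \'etale or open neighbourhood, you use \eqref{Thom.5.2} together with the vanishing $j^*i_*\simeq 0$. That vanishing comes from $\lSm^\sat$ base change and $\sT(\emptyset)=0$, and it makes the exchange map invertible. This identifies the two transformations outright, in both directions, and replaces the paper's three-out-of-four argument with the $\beta_V$.

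There are two caveats.

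First, the opening reduction to generators is unnecessary. Its key claim, that $i_*$ preserves colimits, does not follow from the axioms on $\sT$, since nothing says the $M_Z(W)$ are compact. You never use it, so drop it.

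Second, you also need Zariski locality for opens $U$ that do not contain $Z$, in the step using Proposition~\ref{Gysin.26} and for the local charts. For this, \eqref{Thom.5.2} is the wrong tool. Its left vertical exchange map $g_\sharp i'_*\to i_*g'_\sharp$ is not known to be invertible unless $g'$ is an isomorphism. The working mechanism is the paper's:
\begin{itemize}
\item decompose $i_*\cF$ by descent on $X$ into pieces $v_\sharp v^*i_*\cF$;
\item use the base change $v^*i_*\simeq b_*w^*$;
\item check compatibility of the Gysin maps with the counits $v_\sharp v^*\to\id$, which is \eqref{Thom.5.1} plus naturality (the $\gamma_V$ square in the paper).
\end{itemize}
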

\begin{proof}
Let $v\colon V\to X$ be a strict smooth morphism of fs log schemes,
and form a cartesian square
\[
\begin{tikzcd}
W\ar[r,"b"]\ar[d,"w"']&
V\ar[d,"v"]
\\
Z\ar[r,"i"]&
X.
\end{tikzcd}
\]
Let $\beta_V$ (resp.\ $\gamma_V$) be the composite of  morphisms in the lower (resp.\ upper) row of
\[
\begin{tikzcd}
(fv)_\sharp v^* i_*\ar[r,"\Gys"]\ar[d,"\Ex"',"\simeq"]&
(fvb)_\sharp \Sigma^{\rN_b} b^*v^*i_*\ar[r,"\simeq"]\ar[d,"\Ex"]&
(fiw)_\sharp w^* \Sigma^{\rN_i} i^* i_*\ar[r,"\ad'"]&
(fiw)_\sharp w^*\Sigma^{\rN_i} \ar[d,"\simeq"]
\\
(fv)_\sharp b_* w^*\ar[r,"\Gys"]&
(fvb)_\sharp \Sigma^{\rN_b} b^*b_* w^*\ar[rr,"\ad'"]&
&
(fvb)_\sharp \Sigma^{\rN_b} w^*.
\end{tikzcd}
\]
Then
\begin{equation}
\label{local.1.1}
\text{$\beta_V$ is an isomorphism if and only if $\gamma_V$ is an isomorphism.}
\end{equation}

We have an induced commutative square
\[
\begin{tikzcd}
M_S(V)\ar[r,"\Gys"]\ar[d]&
\Th_W(\rN_a)\ar[d]
\\
M_S(X)\ar[r,"\Gys"]&
\Th_Z(\rN_i).
\end{tikzcd}
\]
Argue as in Theorem \ref{setup.1} to upgrade this square to a commutative diagram
\[
\begin{tikzcd}
(fv)_\sharp v^*\ar[d,"\ad'"']\ar[r,"\Gys"]&
(fvb)_\sharp \Sigma^{\rN_b} b^*v^*\ar[r,"\simeq"]&
(fiw)_\sharp w^*\Sigma^{\rN_i} i^*\ar[d,"\ad'"]
\\
f_\sharp\ar[rr,"\Gys"]&
&
(fi)_\sharp \Sigma^{\rN_i} i^*.
\end{tikzcd}
\]
Hence the diagram
\[
\begin{tikzcd}
(fv)_\sharp v^*i_*\ar[d,"\ad'"']\ar[r,"\Gys"]&
(fvb)_\sharp \Sigma^{\rN_b} b^*v^*i_*\ar[r,"\simeq"]&
(fiw)_\sharp w^*\Sigma^{\rN_i} i^*i_*\ar[d,"\ad'"]\ar[r,"\ad'"]&
(fiw)_\sharp w^*\Sigma^{\rN_i}\ar[d,"\ad'"]
\\
f_\sharp i_*\ar[rr,"\Gys"]&
&
(fi)_\sharp \Sigma^{\rN_i} i^*i_*\ar[r,"\ad'"]&
(fi)_\sharp \Sigma^{\rN_i}
\end{tikzcd}
\]
commutes,
i.e.,
\[
\begin{tikzcd}
(fv)_\sharp v^*i_*\ar[r,"\gamma_V"]\ar[d,"\ad'"']&
(fiw)_\sharp w^*\Sigma^{\rN_i} \ar[d,"\ad'"]
\\
f_\sharp i_*\ar[r,"\gamma_X"]&
(fi)_\sharp \Sigma^{\rN_i}
\end{tikzcd}
\]
commutes.
Now,
let
\[
\begin{tikzcd}
V''\ar[d]\ar[r]&
V\ar[d,"v"]
\\
V'\ar[r,"v'"]&
X
\end{tikzcd}
\]
be a strict Nisnevich distinguished square,
and consider the pullback
\[
\begin{tikzcd}
W''\ar[d]\ar[r]&
W\ar[d,"w"]
\\
W'\ar[r,"w'"]&
Z.
\end{tikzcd}
\]
Let $v''\colon V''\to X$ and $w''\colon W''\to X$ be the composite morphisms.
Arguing as above,
we obtain a commutative cube
\[
\begin{tikzcd}[column sep=tiny, row sep=tiny]
&
(fv'')_\sharp v''^* i_*\ar[ld]\ar[rr]\ar[dd,"\gamma_{V''}",near start]&
&
(fv)_\sharp v^* i_*\ar[ld]\ar[dd,"\gamma_V"]
\\
(fv')_\sharp v'^* i_*\ar[rr,crossing over]\ar[dd,"\gamma_{V'}"']&
&
f_\sharp  i_*\ar[dd,"\gamma_X",near end]
\\
&
(fiw'')_\sharp w''^*\Sigma^{\rN_i}\ar[rr]\ar[ld]&
&
(fiw)_\sharp w^*\Sigma^{\rN_i}\ar[ld]
\\
(fiw')_\sharp w'^*\Sigma^{\rN_i}\ar[rr]&
&
(fi)_\sharp \Sigma^{\rN_i}\ar[uu,crossing over,leftarrow]
\end{tikzcd}
\]
whose top and bottom squares are cartesian by strict Nisnevich descent.
Together with \eqref{local.1.1},
we deduce the following claim:
If three of $\beta_X$, $\beta_V$, $\beta_{V'}$, and $\beta_{V''}$ are isomorphisms,
then the remaining one is an isomorphism too.
In particular, the question is strict Nisnevich local on $X$.

Hence using
\cite[Proposition C.1]{divspc},
as in the proof of Theorem \ref{setup.1},
we reduce to the case where $f$ is strict smooth.
By \cite[Proof of Lemma 2.28 in \S 3]{MV},
we may also assume that there exists a commutative diagram with cartesian squares
\[
\begin{tikzcd}
Z\ar[d]\ar[r,"\id",leftarrow]&
Z\ar[r,"\id"]\ar[d]&
Z\ar[d]
\\
X\ar[r,leftarrow,"u"]&
X'\ar[r,"u'"]&
X''
\end{tikzcd}
\]
with strict \'etale $u$ and $u'$ such that $Z\to X''$ is identified with the zero section $Z\to Z\times \A^s$ for some integer $s\geq 0$.
Using the Nisnevich coverings $\{X-Z,X'\to X\}$ and $\{X''-Z,X'\to X''\}$ and the above paragraph,
we reduce to the case of $Z\to Z\times \A^s$.
Using the Nisnevich covering $\{Z\times \A^s,\Z\times (\P^s-0)\to \Z\times \P^s\}$ and the above paragraph,
we also reduce to the case of $Z\to Z\times \P^s$.
Replacing $S$ by $Z$,
we may assume $S=Z$.

Consider the induced commutative diagram with cartesian squares
\[
\begin{tikzcd}
S\ar[r,"i"]\ar[d,"i"']&
X\ar[r,"f"]\ar[d,"i'"]&
S\ar[d,"i"]
\\
X\ar[r,"a"]&
X\times_S X\ar[r,"p_2"]&
X,
\end{tikzcd}
\]
where $a$ is the diagonal morphism, and $p_2$ is the second projection.
Let $p_1\colon X\times_S X\to X$ be the first projection.
The composite natural transformation
\[
\mathfrak{p}_f
\colon
f_\sharp
\xrightarrow{\simeq}
f_\sharp p_{1*} a_*
\xrightarrow{\Ex}
f_* p_{2\sharp} a_*
\xrightarrow{\Gys}
f_* \Sigma^{\Omega_f^1} a^*a_*
\xrightarrow{\ad'}
f_* \Sigma^{\Omega_f^1}
\]
is an isomorphism by Proposition \ref{Gysin.21}.
Since the second morphism is an isomorphism by Propositions \ref{Thom.2} and \ref{Gysin.21},
the composite natural transformation
\[
f_* p_{2\sharp} a_*
\xrightarrow{\Gys}
f_* \Sigma^{\Omega_f^1} a^*a_*
\xrightarrow{\ad'}
f_* \Sigma^{\Omega_f^1}
\]
is an isomorphism.
We also have a commutative diagram
\[
\begin{tikzcd}[row sep=small]
f_* p_{2\sharp} a_* i_*\ar[r,"\Gys"]\ar[d,"\simeq"']&
f_* \Sigma^{\Omega_f^1} a^*a_* i_*\ar[r,"\ad'"]\ar[d,"\simeq"]&
f_* \Sigma^{\Omega_f^1} i_*\ar[r,"\simeq"]&
f_* i_* \Sigma^{\rN_i}\ar[d,"\simeq"]
\\
f_*p_{2\sharp}i_*'i_*\ar[d,"\Ex"']\ar[r,"\Gys"]\ar[rrd,"(b)",phantom]&
f_*\Sigma^{\Omega_f^1}a^*i_*'i_*\ar[r,"\Ex"]&
f_*\Sigma^{\Omega_f^1}i_*i^*i_*\ar[d,"\simeq"]\ar[r,"\ad'"]&
f_*\Sigma^{\Omega_f^1}i_*\ar[d,"\simeq"]
\\
f_*i_*f_\sharp i_*\ar[rr,"\Gys",near start]\ar[d,"\simeq"']&
&
f_*i_*\Sigma^{\rN_i} i^*i_*\ar[r,"\ad'"]\ar[d,"\simeq"]&
f_*i_*\Sigma^{\rN_i}\ar[d,"\simeq"]
\\
f_\sharp i_*\ar[rr,"\Gys"]&
&
\Sigma^{\rN_i} i^*i_*\ar[r,"\ad'"]&
\Sigma^{\rN_i},
\end{tikzcd}
\]
where $(b)$ is obtained from \eqref{Thom.1.2}.
To conclude,
observe that the natural transformation $\Ex\colon p_{2\sharp}u_* \to i_*f_\sharp$ is an isomorphism by Propositions \ref{Thom.2} and \ref{Gysin.21}.
\end{proof}

Now, we recover the motivic ambidexterity of Annala-Hoyois-Iwasa \cite[Theorem 1.1(i)]{AHI} in the log setting:

\begin{thm}
\label{local.2}
Let $f\colon X\to S$ be a strict smooth projective morphism of fs log schemes.
Then $\mathfrak{p}_f$ is an isomorphism.
\end{thm}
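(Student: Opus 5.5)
Since $f$ is projective, Zariski locally on $S$ it factors as $X\xrightarrow{i}\P^n_S\xrightarrow{p}S$ with $i$ a strict closed immersion. Note that $\P^n_S$ is strict smooth separated over $S$. The plan is to deduce that $\mathfrak{p}_f$ is an isomorphism from three ingredients: $\mathfrak{p}_p$ is an isomorphism (Proposition \ref{Gysin.21}); $f_\sharp i_*\to(fi)_\sharp\Sigma^{\rN_i}$ is an isomorphism (Theorem \ref{local.1}); and the compatibility of Proposition \ref{Thom.3}.

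First I would reduce to the case where such a factorization exists globally. I would check that the condition ``$\mathfrak{p}_f$ is an isomorphism'' is Zariski local on $S$. Proposition \ref{Thom.2} gives compatibility of $\mathfrak{p}_f$ with $g^*$ for open immersions $g\colon S'\to S$. Since the family of $g^*$ for a Zariski cover is jointly conservative (by the sheaf property of $\sT$), an isomorphism after each such $g^*$ suffices, given that $g^*f_\sharp\simeq f'_\sharp g'^*$ and $g^*f_*\simeq f'_*g'^*$ for open $g$. The second identification needs justification: I would note that an open immersion is strict étale, so $g_\sharp$ exists and $g^*$ is right adjoint to $g_\sharp$, and then $\Ex\colon g^*f_*\to f'_*g'^*$ is the mate of the $\lSm^\sat$ base change isomorphism $g'_\sharp f'^*\simeq f^*g_\sharp$.

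With $f=pi$, I would apply Proposition \ref{Thom.3} to $i\colon X\to \P^n_S$ over $S$, with $p$ in the role of ``$f$'' and $f=pi$ in the role of ``$fi$''. Its square reads
\[
\mathfrak{p}_{f}\circ(\text{Theorem \ref{local.1} map})\simeq (\text{iso})\circ \mathfrak{p}_p i_*,
\]
as maps $p_\sharp i_*\to p_*i_*\Sigma^{\Omega_f^1}\Sigma^{\rN_i}$. In this square the top composite $p_\sharp i_*\to f_\sharp\Sigma^{\rN_i}$ is an isomorphism by Theorem \ref{local.1}, $\mathfrak{p}_p$ is an isomorphism by Proposition \ref{Gysin.21}, and the bottom arrow is an isomorphism. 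Hence $\mathfrak{p}_f\Sigma^{\rN_i}$ is an isomorphism. Since $\Sigma^{\rN_i}$ is an autoequivalence of $\sT(X)$, $\mathfrak{p}_f$ is an isomorphism, with the relevant identification $f_*\Sigma^{\Omega_f^1}\Sigma^{\rN_i}=p_*i_*\Sigma^{\Omega_f^1}\Sigma^{\rN_i}$.

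The main obstacle is the hypotheses rather than the formal argument. Theorem \ref{local.1} and Proposition \ref{Thom.3} require $i$ to be a strict closed immersion between objects of $\lSm_S^\sat$, respectively strict smooth separated over $S$. Here $X$ is strict smooth over $S$ and $\P^n_S$ is strict smooth over $S$, so these are satisfied. The remaining delicate point is the Zariski localization of the second exchange map $g^*f_*\simeq f'_*g'^*$, which I would verify via the mate argument above.
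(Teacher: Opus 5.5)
Your proposal is correct and follows the paper's proof: localize on $S$ via Proposition \ref{Thom.2}, factor $f$ as $X\xrightarrow{i}\P^n_S\xrightarrow{p}S$, and combine Proposition \ref{Thom.3} with Theorem \ref{local.1} and Proposition \ref{Gysin.21}. You also correctly fill in the points the paper leaves implicit: the mate argument showing that $\Ex\colon g^*f_*\to f'_*g'^*$ is invertible for open immersions, joint conservativity over a Zariski cover, strictness of $i$, and cancelling the autoequivalence $\Sigma^{\rN_i}$.
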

\begin{proof}
By Proposition \ref{Thom.2},
we can work Zariski locally on $S$,
so we may assume that $f$ admits a factorization $X\xrightarrow{i} \P^n\xrightarrow{p}S$ such that $i$ is a closed immersion.
By Proposition \ref{Thom.3} and Theorem \ref{local.1},
$\mathfrak{p}_f$ is an isomorphism if $\mathfrak{p}_p$ is an isomorphism.
Proposition \ref{Gysin.21} finishes the proof.
\end{proof}

\begin{thm}
\label{local.3}
Let $S\in \lSch$,
and let $i\colon Z\to X$ be a strict closed immersion of strict smooth projective fs log schemes over $S$.
Then the comparison morphism
\[
\comp
\colon
\Th_{(X,Z)}^{-1}(\Omega_{(X,Z)/S}^1)\to D(M_S(X/Z))
\]
is an isomorphism.
\end{thm}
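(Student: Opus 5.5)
The plan is to deduce the statement from Proposition \ref{Gysin.13}, applied to the strict closed immersion $i\colon Z\to X$ in $\lSm_S^\sat$. Both $X$ and $Z$ are strict smooth over $S$, hence saturated log smooth. Proposition \ref{Gysin.13} gives a morphism of cofiber sequences
\[
\begin{tikzcd}[column sep=small]
\Th_{(X,Z)}^{-1}(\Omega_{(X,Z)/S}^1)\ar[r]\ar[d,"\comp"']&
\Th_X^{-1}(\Omega_{X/S}^1)\ar[r]\ar[d,"\comp"]&
\Th_Z^{-1}(\Omega_{Z/S}^1)\ar[d,"\comp"]
\\
D(M_S(X/Z))\ar[r]&
D(M_S(X))\ar[r]&
D(M_S(Z)).
\end{tikzcd}
\]
The bottom row is a cofiber sequence because $D=\ul{\Hom}_S(-,1_S)$ is exact and sends the cofiber sequence $M_S(Z)\to M_S(X)\to M_S(X/Z)$ to a fiber sequence. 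By the five lemma in the stable $\infty$-category $\sT(S)$ (equivalently, taking fibers), the left vertical arrow is an isomorphism as soon as the middle and right ones are.

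For the middle and right arrows we use Theorem \ref{local.2}. The structural morphisms $f\colon X\to S$ and $fi\colon Z\to S$ are strict smooth projective, so $\mathfrak{p}_f$ and $\mathfrak{p}_{fi}$ are isomorphisms. Both morphisms are also separated, being projective. The implication ``$\Rightarrow$'' of Proposition \ref{alpha.3} then shows that $M_S(X)$ and $M_S(Z)$ are dualizable, and that $\comp\colon \Th_X^{-1}(\Omega_{X/S}^1)\to D(M_S(X))$ and $\comp\colon \Th_Z^{-1}(\Omega_{Z/S}^1)\to D(M_S(Z))$ are isomorphisms. The five lemma now shows that the left comparison morphism is an isomorphism. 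As a byproduct, $M_S(X/Z)$ is dualizable, since dualizable objects are closed under cofibers in a stable symmetric monoidal $\infty$-category; this gives the Lefschetz duality in the introduction.

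The main obstacle I anticipate is a compatibility check. The vertical maps produced in Proposition \ref{Gysin.13} must be the comparison morphisms of Constructions \ref{ev.1} and \ref{blow.3}, built with exactly the same Gysin and evaluation data that Proposition \ref{alpha.3} relates to $\mathfrak{p}_f$. Proposition \ref{Gysin.13} is constructed from these evaluation morphisms, so this should hold by construction. I would still verify that the identification of its left term uses the isomorphism of Proposition \ref{blow.4}, matching the definition of $\comp$ in Construction \ref{blow.3}. Beyond that, the argument is formal.
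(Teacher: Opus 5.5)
Your proposal is correct and is essentially the paper's proof. The paper also combines Proposition~\ref{Gysin.13} with Theorem~\ref{local.2} and Proposition~\ref{alpha.3}: the middle and right comparison maps are isomorphisms, so the left one is too by two-out-of-three in a morphism of cofiber sequences (you also silently corrected the swapped subscripts in the top row of Proposition~\ref{Gysin.13}).
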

\begin{proof}
This is an immediate consequence of Propositions \ref{Gysin.13} and \ref{alpha.3} and Theorem \ref{local.2}.
\end{proof}

\begin{rmk}
Let $S\in \Sch$,
and let
\[
\sT\in \Sh_{\sNis}(\SmlSm_S,\CAlg(\Prst))
\]
(instead of $\sT\in \Sh_\sNis(\lSch,\CAlg(\Prst))$) be a strict Nisnevich sheaf of presentably symmetric monoidal stable $\infty$-categories satisfying the conditions in \S \ref{sat}.
Then Theorem \ref{local.2} holds for this $\sT$ and every smooth projective morphism of schemes $f\colon X\to S$ by the same proof.
This applies to Theorem \ref{blow.6} below also.
\end{rmk}

\section{Poincar\'e duality for blow-up squares}
\label{blow}

We keep working with $\sT$ in \S \ref{sat}.
We will show in Theorem \ref{blow.6} that Poincar\'e duality holds for three corners of a smooth blow-up square if and only if Poincar\'e duality holds for the remaining corner too.

\begin{const}
Let $S\in \Sch$, let $X\in \Sm_S$,
and let $Y$ and $Z$ be transversal strict closed subschemes of $X$ with $Y,Z\in \Sm_S$.
We set $W:=Y\times_X Z$ for simplicity of notation.
Then we have induced morphisms
\begin{equation}
\label{blow.7.1}
\Bl_{\Bl_W Y} (\Bl_Z X)
\leftarrow
\Bl_{\Bl_W Y} (\Bl_{\Bl_W Z}(\Bl_W X))
\to
\Bl_{\Bl_W Y} (\Bl_W X)
\to
\Bl_Y X.
\end{equation}
\end{const}

\begin{prop}
\label{blow.2}
Let $S\in \Sch$, and let $i\colon Z\to X$ be a closed immersion in $\Sm_S$.
Then there exists a commutative square
\[
\begin{tikzcd}
\Th_{(X,Z)}^{-1}(\Omega_{(X,Z)/S}^1)\ar[r,"\comp"]\ar[d,"\simeq"']&
D(M_S(X/Z))\ar[d,"\simeq"]
\\
\Th_{(X',Z')}^{-1}(\Omega_{(X',Z')/S}^1)\ar[r,"\comp"]
&
D(M_S(X'/Z'))
\end{tikzcd}
\]
with vertical isomorphisms,
where $X':=\Bl_Z X$ and $Z':=Z\times_X \Bl_Z X$. 
\end{prop}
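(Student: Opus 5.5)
The plan is to produce the two vertical isomorphisms separately and then check compatibility of the comparison maps. Both comparison maps are adjoint to evaluation maps built in Construction \ref{blow.3}, so compatibility reduces to a compatibility of evaluation maps.

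\emph{Left vertical map.} The blow-up square $Z'\to X'$, $Z\to X$ gives $(X',Z')$, whose underlying scheme is $\Bl_{Z'}X'=X'$ because $Z'$ is a Cartier divisor. Its log structure is the Deligne--Faltings structure of $Z'$, which is exactly $(X,Z)$. So $(X',Z')\simeq (X,Z)$ as fs log schemes over $S$, and also $\Omega_{(X',Z')/S}^1\simeq\Omega_{(X,Z)/S}^1$. The left vertical isomorphism is then the identity under this identification.

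\emph{Right vertical map.} We need $M_S(X'/Z')\simeq M_S(X/Z)$, i.e.\ the induced square
\[
\begin{tikzcd}
M_S(Z')\ar[r]\ar[d]& M_S(X')\ar[d]\\ M_S(Z)\ar[r]& M_S(X)
\end{tikzcd}
\]
should be cocartesian. We compare the cofibers of the rows via the log Gysin sequences. By \eqref{Gysin.0.3}, the cofiber of the top row is $M_S(X')/M_S(Z')\simeq M_S(X',Z')/\ldots$. More directly, we use the cofiber sequences
\[
M_S(X,Z)\to M_S(X)\to \Th_Z(\rN_Z X), \qquad M_S(X',Z')\to M_S(X')\to \Th_{Z'}(\rN_{Z'}X').
\]
Since $(X',Z')\simeq(X,Z)$, it is enough to show that the square of Thom spaces
\[
\Th_{Z'}(\rN_{Z'}X')\to \Th_Z(\rN_Z X), \qquad M_S(Z')\to M_S(Z)
\]
is cocartesian. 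Now $Z'=\P(\rN_Z X)$, and $\rN_{Z'}X'=\cO(-1)$. Hence this reduces to a projective bundle computation: the cofiber of $M_S(\P(\cE))\to M_S(Z)$ versus the cofiber of $\Th_{\P(\cE)}(\cO(-1))\to \Th_Z(\cE)$. I would get this by Zariski-localizing on $Z$ to trivialize $\cE$ and then applying the deformation-to-the-normal-cone invariance of Theorem \ref{Gysin.25} to $\Bl_0\cE\to\cE$, which is the $\Th$-version of the blow-up formula. This yields $D(M_S(X/Z))\xrightarrow{\simeq} D(M_S(X'/Z'))$. I expect this blow-up invariance to be the main technical step. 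If it is not directly available, I would instead use the cofiber sequence in Proposition \ref{Gysin.13} for both pairs and deduce the isomorphism from the five lemma, once compatible maps on the other two terms are in place.

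\emph{Commutativity.} We must check that the evaluation morphism of Construction \ref{blow.3} for $(X',Z')$, precomposed with $M_S(X'/Z')\to M_S(X/Z)$, agrees with the one for $(X,Z)$. Every ingredient of the construction is natural along $X'\to X$:
\begin{itemize}
\item the graph closed immersion $(X,Z)\to X\times_S(X,Z)$ maps to its counterpart $(X,Z)\to X'\times_S(X,Z)$ over the map $X'\to X$;
\item the Gysin maps are natural for this morphism, since the normal bundles pull back, as in Lemma \ref{Thom.1} and Proposition \ref{Thom.5};
\item the null sequence \eqref{blow.3.1} is built from the strict closed immersions $Z\times_S(X,Z)$ and $Z'\times_S(X,Z)$, which are compatible.
\end{itemize}
The isomorphism of Proposition \ref{blow.4} is also natural, through $u^*\Omega_{X/S}^1\to u'^*\Omega_{X'/S}^1$, and both agree with $\Omega^1_{(X,Z)/S}$ after the twist by $-E$ coming from Proposition \ref{blow.5}. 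Passing to adjoints then gives the commutative square.
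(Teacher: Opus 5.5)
\textbf{There is a genuine gap in your commutativity step.} You assert that the Gysin maps in Construction \ref{blow.3} are natural along $X'\to X$ ``as in Lemma \ref{Thom.1} and Proposition \ref{Thom.5}'', but neither result applies. Put $Y:=(X,Z)=(X',Z')$ and let $E=Z'$ be the exceptional divisor. You need a morphism from the log Gysin sequence of the graph $Y\to X'\times_S Y$ to that of the graph $Y\to X\times_S Y$.

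\begin{itemize}
\item Lemma \ref{Thom.1} only treats base change along a change of base $S'\to S$.
\item Proposition \ref{Thom.5} needs a cartesian square whose vertical map is log smooth. Here $X'\times_S Y\to X\times_S Y$ is a strict blow-up, so it is not log smooth.
\item The square formed by the two graphs is not cartesian. Its fiber product has underlying scheme $X'\times_X X'$, which contains $E\times_Z E$ besides the diagonal. For the same reason, the universal property of blowing up gives no map $\rD_Y(X'\times_S Y)\to\rD_Y(X\times_S Y)$.
\item The normal bundles do not pull back to each other. They are associated with the restrictions to $Y$ of $\Omega^1_{X'/S}$ and of the pullback of $\Omega^1_{X/S}$. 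The natural map between these is injective with cokernel supported on $E$ when $Z$ has codimension $\geq 2$.
\end{itemize}

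So the two Gysin maps do not even have the same target. You must also prove that $\Th_Y(\rN_Y(X'\times_S Y))\to\Th_Y(\rN_Y(X\times_S Y))$ is an isomorphism, which is not formal without $\A^1$-invariance. This missing naturality of log Gysin maps is exactly the point the paper stresses in its introduction. Your remark about Proposition \ref{blow.4} and the twist by $-E$ points at the right Thom-space comparison. It does not supply the morphism of Gysin sequences that this comparison must be compatible with.

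\textbf{How the paper fills the gap.} It applies the iterated blow-up \eqref{blow.7.1} inside $X\times_S\ul{Y}\times\P^1$. This produces a roof, through some $P$, from the diagram \eqref{Gysin.0.2} for $Y\to X'\times_S Y$ to the one for $Y\to X\times_S Y$. The backward leg of the roof consists of admissible blow-ups along smooth centers, so blow-up invariance \cite[Theorem 7.2.10]{logDM} inverts it. This yields the morphism of cofiber sequences \eqref{blow.2.1}. The map on Thom spaces is an isomorphism because the $\rN$-parts are again related by admissible blow-ups: both have interior $\rN_{X-Z}(X\times_S(X-Z))$. Only after this does the paper use the compatibility with $Z'\times_S Y\to(X'\times_S Y,Y)$, pass to $\Th^{-1}$ as in Theorem \ref{setup.1} and Construction \ref{Gysin.10}, and apply Proposition \ref{blow.4} to obtain the triangle of evaluation maps.

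\textbf{The vertical maps are otherwise fine.} The left one is the identity, as in the paper. The right one is smooth blow-up descent, which you can simply quote from \cite[Theorem 3.3.7]{logSH}. Your fallback via Proposition \ref{Gysin.13} and the five lemma would not work, since $D(M_S(X))\to D(M_S(X'))$ and $D(M_S(Z))\to D(M_S(Z'))$ are not isomorphisms.
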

\begin{proof}
We set $Y:=(X,Z)=(X',Z')$.
Note that the right vertical morphism is an isomorphism by the smooth blow-up descent \cite[Theorem 3.3.7]{logSH}.
The left vertical morphism is the identity after the identification $(X,Z)=(X',Z')$.

We have the graph morphism $\Gamma\colon Y\to X\times_S Y$,
and we often regard $Y$ as a strict closed subscheme of $X\times_S Y$ by this way.
Apply \eqref{blow.7.1} to the transversal smooth subschemes $\ul{Y}\times 0$ and $Z\times_S \ul{Y}\times 0$ of $X\times_S \ul{Y}\times \P^1$ to construct morphisms
\[
\Bl_{\ul{Y}}(X'\times_S \ul{Y} \times \P^1)
\leftarrow
\ul{V}
\to
\Bl_{\ul{Y}}(X\times_S \ul{Y}\times \P^1),
\]
where $\ul{V}$ corresponds to the second term in \eqref{blow.7.1}.
Since we have a natural morphism
\[
\Bl_Y(X'\times_S Y\times \square) - \partial \Bl_Y(X'\times_S Y\times \square)
\to
\Bl_Y(X\times_S Y\times \square) - \partial \Bl_Y(X\times_S Y\times \square),
\]
we have induced morphisms
\[
\Bl_Y(X'\times_S Y\times \square)
\leftarrow V
\to \Bl_Y(X\times_S Y\times \square)
\]
for some $V$ with the underlying scheme $\ul{V}$ such that the left morphism is an admissible blow-up along a smooth center.
By restricting this,
we also obtain
\[
\rD_Y (X'\times_S Y)\leftarrow W 
\to
\rD_Y(X\times_S Y)
\]
for some $W$ such that the left morphism is an admissible blow-up too.
This yields morphisms of diagrams
\begin{equation}
\label{blow.2.2}
(\eqref{Gysin.0.2}\text{ for }Y\to X'\times_S Y)
\leftarrow P
\to
(\eqref{Gysin.0.2}\text{ for }Y\to X\times_S Y)
\end{equation}
for some $P$ such that the left morphism consists of admissible blow-ups along smooth centers.
Using the invariance under admissible blow-ups along smooth centers \cite[Theorem 7.2.10]{logDM},
we obtain a morphism of cofiber sequences
\begin{equation}
\label{blow.2.1}
\begin{tikzcd}
M_S(X'\times_S Y,Y)\ar[d]\ar[r]&
M_S(X'\times_S Y)\ar[r]\ar[d]&
\Th_Y(\rN_Y(X'\times_S Y))\ar[d]
\\
M_S(X\times_S Y,Y)\ar[r]&
M_S(X\times_S Y)\ar[r]&
\Th_Y(\rN_Y(X\times_S Y)).
\end{tikzcd}
\end{equation}

The $\rN$ part of \eqref{blow.2.2} can be written as
\[
\begin{tikzcd}[column sep=small]
(\rN_Y(X'\times_S Y),Y)
\ar[r,leftarrow]\ar[d]&
Q'\ar[r]\ar[d]&
(\rN_Y(X\times_S Y),Y)\ar[d]
\\
\rN_Y(X'\times_S Y)\ar[r,leftarrow]&
Q\ar[r]&
\rN_Y(X\times_S Y)
\end{tikzcd}
\]
for some $Q$ and $Q'$ such that the left horizontal morphisms are admissible blow-ups along smooth centers.
The right horizontal morphisms are composites of admissible blow-ups along smooth centers since
\begin{align*}
\rN_Y(X'\times_S Y)-\partial \rN_Y(X'\times_S Y)
\simeq &
\rN_{X-Z}(X\times_S (X-Z))
\\
\simeq & \rN_Y(X\times_S Y) - \partial \rN_Y(X\times_S Y).
\end{align*}
Hence the morphism
\[
\Th_Y(\rN_Y(X'\times_S Y))
\to
\Th_Y(\rN_Y(X\times_S Y))
\]
is an isomorphism.
We have an induced commutative square
\[
\begin{tikzcd}
Z'\times_S Y\ar[d]\ar[r]&
(X'\times_S Y,Y)\ar[d]
\\
Z\times_S Y\ar[r]&
(X\times_S Y,Y).
\end{tikzcd}
\]
Combine this with \eqref{blow.2.1} to obtain a morphism of null sequences
\[
\begin{tikzcd}
M_S(Z'\times_S Y)\ar[d]\ar[r]&
M_S(X'\times_S Y)\ar[r]\ar[d]&
\Th_Y(\rN_Y(X'\times_S Y))\ar[d,"\simeq"]
\\
M_S(Z\times_S Y)\ar[r]&
M_S(X\times_S Y)\ar[r]&
\Th_Y(\rN_Y(X\times_S Y)).
\end{tikzcd}
\]
Using the usual method as in Theorem \ref{setup.1} and Construction \ref{Gysin.10},
we can also obtain a morphism of null sequences
\[
\begin{tikzcd}
\Th_{Z'\times_S Y}^{-1}(w'^*\Omega_{X/S}^1)\ar[d]
\ar[r]&
\Th_{X'\times_S Y}^{-1}(u'^*\Omega_{X/S}^1)\ar[d]
\ar[r]&
M_S(Y)\ar[d,"="]
\\
\Th_{Z\times_S Y}^{-1}(w^*\Omega_{X/S}^1)
\ar[r]&
\Th_{X\times_S Y}^{-1}(u^*\Omega_{X/S}^1)
\ar[r]&
M_S(Y),
\end{tikzcd}
\]
where $u$ and $w$ are in Construction \ref{blow.3}, and $u'$ and $w'$ are induced by $u$ and $w$.
Together with Proposition \ref{blow.4},
we obtain a commutative triangle
\[
\begin{tikzcd}
M_S(X'/Z') \otimes_S\Th_Y^{-1}(\Omega_{(X',Z')/S}^1)
\ar[r,"\ev"]\ar[d,"\simeq"']&
\unit_S
\\
M_S(X/Z)\otimes_S \Th_Y^{-1}(\Omega_{(X,Z)/S}^1).\ar[ru,"\ev"']&
\end{tikzcd}
\]
By adjunction,
we obtain the desired commutative square.
\end{proof}

\begin{thm}
\label{blow.6}
Let $X\to S$ be a smooth morphism of schemes,
and let $Z\to X$ be a closed immersion in $\Sm_S$.
If $\mathfrak{p}$ is an isomorphism for three of 
\[
X,Z,\Bl_Z X,Z\times_X \Bl_Z X\to S,
\]
then $\mathfrak{p}$ is an isomorphism for the other too.
\end{thm}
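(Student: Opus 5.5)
By Proposition \ref{alpha.3}, $\mathfrak{p}$ being an isomorphism for a smooth separated $Y\to S$ is equivalent to two conditions: $M_S(Y)$ is dualizable, and $\comp\colon \Th_Y^{-1}(\Omega_{Y/S}^1)\to D(M_S(Y))$ is an isomorphism. I will therefore prove the two-out-of-four property for these two conditions together. Set $X':=\Bl_Z X$ and $Z':=Z\times_X X'$. The plan is to place the four objects in two comparable cofiber sequences and use the compatibility of $\comp$ with the maps between them.

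\emph{Step 1: two morphisms of cofiber sequences.} Proposition \ref{Gysin.13}, applied to $Z\to X$ and to $Z'\to X'$, gives morphisms of cofiber sequences
\[
\Th_{(X,Z)}^{-1}(\Omega_{(X,Z)/S}^1)\to \Th_X^{-1}(\Omega_{X/S}^1)\to \Th_Z^{-1}(\Omega_{Z/S}^1)
\]
into $D(M_S(X/Z))\to D(M_S(X))\to D(M_S(Z))$, and similarly with primes. Each is assembled from the maps $\comp$.

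\emph{Step 2: identify the relative terms.} Proposition \ref{blow.2} identifies the two left-hand comparison maps: we have $(X,Z)=(X',Z')$, and $D(M_S(X/Z))\simeq D(M_S(X'/Z'))$ compatibly with $\comp$. The isomorphism $M_S(X/Z)\simeq M_S(X'/Z')$ comes from smooth blow-up descent \cite[Theorem 3.3.7]{logSH}. So the left comparison map, call it $\comp_{(X,Z)}$, is common to both sequences. Denote its source by $L$.

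\emph{Step 3: deduce dualizability.} I treat the cases according to which corner is missing.

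If $X$ and $Z$ are among the known corners, then $M_S(X/Z)$ is dualizable, since dualizable objects in a stable symmetric monoidal category are closed under cofibers. By the five lemma applied to the first morphism of cofiber sequences, $\comp_{(X,Z)}$ is an isomorphism. The same then holds for the primed sequence. The cofiber sequence $M_S(Z')\to M_S(X')\to M_S(X'/Z')$ shows that the missing one of $M_S(X')$, $M_S(Z')$ is dualizable, and the five lemma shows its $\comp$ is an isomorphism.

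Symmetrically, if $X'$ and $Z'$ are both known, I run the same argument in the opposite direction.

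The remaining cases have exactly one unknown in each pair, for example $X,X',Z'$ known with $Z$ unknown. Here I cannot get $M_S(X/Z)$ directly. Instead, from the primed sequence $M_S(X'/Z')$ is dualizable and $\comp_{(X,Z)}$ is an isomorphism. Using $M_S(X/Z)\simeq M_S(X'/Z')$, the sequence $M_S(Z)\to M_S(X)\to M_S(X/Z)$ then gives dualizability of $M_S(Z)$, as the fiber of a map between dualizable objects. The five lemma gives that $\comp_Z$ is an isomorphism. The case with $Z$ and $Z'$ known and one of $X,X'$ unknown is similar, using fibers or cofibers as appropriate.

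For the five-lemma step, I only need that $D(-)$ turns cofiber sequences into fiber sequences. This holds because $D$ is exact.

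\emph{Main obstacle.} The main obstacle is ensuring that the two morphisms of cofiber sequences from Proposition \ref{Gysin.13} share the same left vertical map under the identification of Proposition \ref{blow.2}. Proposition \ref{blow.2} gives a commutative square involving $\comp$, but I must check that its right vertical isomorphism $D(M_S(X/Z))\simeq D(M_S(X'/Z'))$ is the one induced by the blow-up maps. This is the one that matches the bottom rows. I would verify this by recalling that the isomorphism in Proposition \ref{blow.2} is, by construction, the dual of the map induced by $X'\to X$.

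Only the vertical isomorphism property is used, not compatibility of the full ladders. Consequently it suffices to know that $\comp_{(X,Z)}$ and $\comp_{(X',Z')}$ are simultaneously isomorphisms, which is exactly the content of the square in Proposition \ref{blow.2}.
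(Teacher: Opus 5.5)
Your proposal is correct and is essentially the paper's argument: the paper's one-line proof combines Proposition \ref{Gysin.13} (for $Z\to X$ and for $Z'\to X'$) with Proposition \ref{blow.2}, implicitly through Proposition \ref{alpha.3} and two-out-of-three for morphisms of cofiber sequences, exactly as you spell out, including the dualizability bookkeeping via $M_S(X/Z)\simeq M_S(X'/Z')$. One minor remark: your ``remaining cases'' are already covered by the first two, since with only one corner unknown one of the pairs $\{X,Z\}$, $\{X',Z'\}$ is always fully known.
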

\begin{proof}
This is an immediate consequence of Propositions \ref{Gysin.13} and \ref{blow.2}.
\end{proof}

For example, let $f\colon X\to S$ be a proper smooth morphism of schemes with morphisms
\[
X_n\to \cdots \to X_0  = X
\]
such that $X_{i+1}=\Bl_{Z_i} X_i$ for each $i$ with a smooth closed subscheme $Z_i$ of $X_i$.
If $X_n\to S$ is projective and $\mathfrak{p}$ is an isomorphism for $Z_i\to S$ for each $i$,
then $\mathfrak{p}_f$ is an isomorphism by Theorems \ref{local.2} and \ref{blow.6}.

By the way,
Theorem \ref{local.2} in the setting of $\sT=\logSH$ with $S=\Spec(k)$ for a perfect field $k$ admits an alternative proof (with a generalization to proper morphisms) assuming resolution of singularities:

\begin{thm}
Let $f\colon X\to k$ be a proper smooth morphism of schemes,
where $k$ is a perfect field admitting resolution of singularities.
Then $\mathfrak{p}_f$ for $\logSH$ is an isomorphism.
\end{thm}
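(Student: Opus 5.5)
We reduce to the projective case by running the blow-up argument of Theorem \ref{blow.6} by induction on $\dim X$, using resolution of singularities to supply the needed modifications.

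First, by Proposition \ref{alpha.3}, $\mathfrak{p}_f$ is an isomorphism if and only if $M_k(X)$ is dualizable and the comparison morphism $\comp\colon \Th_X^{-1}(\Omega_{X/k}^1)\to D(M_k(X))$ is an isomorphism. We may work one connected component of $X$ at a time, so we may assume $X$ is connected of dimension $d$. We argue by induction on $d$. The case $d=0$ is handled by Theorem \ref{local.2}: $X$ is a finite étale, hence smooth projective, $k$-scheme.

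For $d>0$, we apply Chow's lemma, followed by resolution of singularities in its strong form (principalization or embedded resolution). This should produce a sequence of blow-ups
\[
X_n\to\cdots\to X_0=X,
\qquad X_{i+1}=\Bl_{Z_i}X_i,
\]
with each $Z_i$ a smooth closed subscheme of $X_i$ and $X_n$ projective over $k$. Concretely, Chow's lemma gives a projective birational $X'\to X$ with $X'$ quasi-projective. That map is a blow-up of some ideal, and over a field of characteristic zero, or under the resolution hypothesis, we dominate it by a sequence of blow-ups along smooth centers. The composite $X_n\to X$ then factors through $X'$, and $X_n$ is projective since it is proper over $k$ and projective over $X'$ (a composite of blow-ups).

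The main obstacle is to ensure that all centers $Z_i$ are smooth and proper over $k$ and have dimension less than $d$. Properness holds because each $Z_i$ is closed in the proper scheme $X_i$, and $\dim Z_i<d$ because blow-up centers are nowhere dense. Smoothness of the centers is exactly what the strong resolution hypothesis (Hironaka-type principalization) supplies. Over a perfect field, smoothness over $k$ is equivalent to regularity, so regular centers suffice.

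Granting this, the induction hypothesis gives that $\mathfrak{p}$ is an isomorphism for each $Z_i\to k$. The same holds for each exceptional divisor $Z_i\times_{X_i}X_{i+1}$, which is a projective bundle over $Z_i$; one can see this directly from the induction hypothesis, since it is smooth and proper of dimension less than $d$. Theorem \ref{local.2} gives that $\mathfrak{p}$ is an isomorphism for $X_n\to k$. We then apply Theorem \ref{blow.6} to each blow-up square, descending from $i=n-1$ to $i=0$: at each stage three corners are known, namely $X_{i+1}$, $Z_i$, and the exceptional divisor, and the theorem yields the fourth, $X_i$. This finishes the induction, and hence $\mathfrak{p}_f$ is an isomorphism.
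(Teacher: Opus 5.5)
Your proof is correct, but it is not the argument the paper gives.

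\textbf{The paper's route.} It transfers the result from $\SH$. By \cite[Theorem 2.4.50]{CD19}, $\mathfrak{p}_f$ is an isomorphism in $\SH(k)$ for smooth proper $f$, so conditions (i) and (ii) of Proposition \ref{alpha.3} hold there. Since $\omega^*\colon \SH(k)\to \logSH(k)$ is monoidal under resolution of singularities \cite[Theorem 1.1(4)]{loghomotopy}, dualizability of $M_k(X)$ and the comparison isomorphism carry over to $\logSH(k)$. Proposition \ref{alpha.3} then concludes. Resolution of singularities enters only through that comparison theorem.

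\textbf{Your route.} You stay inside the paper. Chow's lemma, together with the clause of resolution of singularities saying that a proper birational map to a smooth integral scheme is dominated by a tower of blow-ups along smooth centers, gives a tower $X_n\to\cdots\to X_0=X$ with $X_n$ projective. Theorem \ref{local.2} handles $X_n$. Induction on dimension handles the centers $Z_i$ and the exceptional divisors, all smooth proper of dimension $<d$. Theorem \ref{blow.6} then descends the tower. This essentially completes the remark the paper makes right after Theorem \ref{blow.6}.

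\textbf{Comparison.} Your route uses no $\A^1$-invariant input: neither Cisinski--D\'eglise nor the comparison functor $\omega^*$. It therefore works verbatim for any $\sT$ satisfying the axioms of \S\ref{sat}, not only $\logSH$. The paper's proof is a two-line deduction but is tied to $\logSH$ and to \cite{loghomotopy}. Your argument genuinely needs the domination-by-smooth-blow-ups clause of resolution of singularities; mere existence of resolutions would not suffice. That clause is part of the standard Friedlander--Voevodsky definition, so this is acceptable, but you should say explicitly which form of the hypothesis you use.

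\textbf{Small imprecisions.}
\begin{itemize}
\item The morphism $X_n\to X'$ is not itself a composite of blow-ups. Rather, $X_n\to X$ is projective and $X'\to X$ is separated, so $X_n\to X'$ is projective. Also, $X'$ is projective over $k$ because it is proper and quasi-projective.
\item You should note that the centers are nowhere dense. This is what gives $\dim Z_i<d$ and keeps the blow-ups nonempty.
\end{itemize}
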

\begin{proof}
By \cite[Theorem 2.4.50]{CD19},
$\mathfrak{p}_f$ for $\SH$ is an isomorphism.
Hence the conditions (i) and (ii) in Proposition \ref{alpha.3} are satisfied for $f$ and $\SH$.
Since $\omega^*$ is monoidal by \cite[Theorem 1.1(4)]{loghomotopy},
the conditions (i) and (ii) in Proposition \ref{alpha.3} are satisfied for $f$ and $\logSH$,
which implies the claim.
\end{proof}

\section{\texorpdfstring{$\omega^*\unit$}{w1}-modules}
\label{w1}

Annala-Hoyois-Iwasa \cite[\S 6]{AHI} considered modules over the $\A^1$-localized sphere spectrum.
Using this,
as a consequence of \cite[Theorem 1.1(i)]{AHI},
they constructed a cohomology theory on smooth schemes over a perfect field $k$ of characteristic $p>0$ that yields the cohomology of a log compactification.
The purpose of this section is to recover this result.

We work with an axiomatic setting as follows.
Let $\varphi_\sharp\colon \sT\to \sT'$ be a morphism in $\Sh_\sNis(\lSch,\CAlg(\Prst))$ such that $\sT$ and $\sT'$ satisfy the conditions in \S \ref{sat}, $\varphi_\sharp\colon \sT(S)\to \sT'(S)$ preserves colimits and is symmetric monoidal for every $S\in \lSch$,
the right adjoint $\varphi_*\colon \sT'(S)\to \sT(S)$ of $\varphi^*$ is fully faithful for every $S\in \lSch$,
and $\varphi_\sharp$ commutes with $f_\sharp$ for every $f\in \lSm^\sat$.

\begin{const}
\label{coh.6}
Consider the cartesian $\lSch$-family of symmetric monoidal $\infty$-categories \cite[Definition 4.8.3.1]{HA}
\[
\sT^\otimes \to 
\Fin_* \times \lSch.
\]
Let $\CAlg(\sT)$ be the $\infty$-category of pairs 
\[
(S\in \lSch,A\in \CAlg(\sT(S))).
\]
Using \cite[Lemma 4.8.3.13]{HA},
we obtain a functor
\[
\sT(-;-)
\colon
\CAlg(\sT)^\op
\to
\CAlg(\PrL)
\]
sending a pair $(S,A)$ to $\Mod_A(\sT(S))$, a morphism of the form $(f,c)\colon (X,f^* A)\to (S,A)$ (with the canonical $c$) to
\[
f^*\colon \Mod_A(\sT(S))\to \Mod_{f^*A}(\sT(X)),
\]
and a morphism of the form $(\id,g)\colon (S,A)\to (S,B)$ to
\[
-\otimes_A B\colon \Mod_A(\sT(S))\to \Mod_B(\sT(S)).
\]

The adjoint pair $(\varphi_\sharp,\varphi^*)$ yields the section
\[
\varphi^*\unit
\colon
\lSch\to \CAlg(\sT)
\]
of the cartesian fibration $\CAlg(\sT)\to \lSch$.
The composite $\sT(-;-)\circ \varphi^*\unit$ yields
\[
\sT(-;\varphi^*1)
\colon
\lSch^\op
\to
\CAlg(\PrL).
\]
This sends $S\in \lSch$ to the symmetric monoidal $\infty$-category of $\unit_S$-module objects of $\sT(X)$, and a morphism $X\to S$ in $\lSch$ to the functor
\begin{equation}
\label{coh.6.1}
f^*:=
f^*(-) \otimes_{f^*\varphi^*1_S} \varphi^* 1_X
\colon
\sT(S;\varphi^*1)
\to
\sT(X;\varphi^*1).
\end{equation}
Moreover,
we have an adjoint pair
\[
(-)\otimes_S \varphi^* 1_S
:
\sT(S)
\rightleftarrows
\sT(S;\varphi^*1)
:
U,
\]
where $U$ is the forgetful functor.
Note that $f^*$ in \eqref{coh.6.1} admits a right adjoint $f_*$ for all morphisms $f$ in $\lSch$ such that $f_*$ commutes with $U$.
Furthermore, if $f$ is log smooth,
then $f^*$ in \eqref{coh.6.1} commutes with $U$ and admits a left adjoint $f_\sharp$.

Since $\varphi^*$ is fully faithful,
we have $\varphi_\sharp \varphi^* 1_S\simeq 1_S$,
so we have an induced pair of adjoint functors
\[
\varphi_\sharp 
:
\sT(S;\varphi^* 1)
\rightleftarrows
\sT'(S)
:
\varphi^*.
\]
As before, $f_*$ commutes with this $\varphi^*$ for all morphisms in $\lSch$,
and $f^*$ commutes with this $\varphi^*$ for all log smooth morphisms in $\lSch$.
Observe that the two triangles
\begin{equation}
\label{coh.1.1}
\begin{tikzcd}
\sT(S;\varphi^*1)\ar[d,"U"']\ar[r,"\varphi_\sharp"]&
\sT'(S)&
\sT'(S)\ar[r,"\varphi^*"]\ar[rd,"\varphi^*"']&
\sT(S;\varphi^*1)\ar[d,"U"]
\\
\sT(S),\ar[ru,"\varphi_\sharp"']&
&
&
\sT(S)
\end{tikzcd}
\end{equation}
commutes.
\end{const}

\begin{prop}
\label{coh.7}
Let $S\in \lSch$.
Then for every strict projective smooth fs log scheme $X$ over $S$,
we have a natural isomorphism of $\varphi^* \unit_S$-modules
\[
M_S(X)\otimes_S \varphi^* 1_S
\simeq
\varphi^* M_S(X).
\]
\end{prop}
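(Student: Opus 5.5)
We want $M_S(X)\otimes_S\varphi^*\unit_S\simeq \varphi^* M_S(X)$ for $f\colon X\to S$ strict smooth projective. The plan is to use Poincar\'e duality (Theorem \ref{local.2}) to rewrite $f_\sharp$ as $f_*\Sigma^{\Omega_f^1}$. The point is that $f_*$ commutes with $\varphi^*$, whereas $f_\sharp$ a priori does not.

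First, there is a canonical comparison map. The unit $\unit_S\to\varphi^*\varphi_\sharp\unit_S\simeq\varphi^*\unit_S$ induces a natural map
\[
M_S(X)\otimes_S\varphi^*\unit_S \to \varphi^*\varphi_\sharp M_S(X)\otimes_S \varphi^*\unit_S\to \varphi^*M_S(X).
\]
The first map is $\mathrm{ad}\otimes\id$. For the second, we use $\varphi_\sharp M_S(X)\simeq M_S(X)$ in $\sT'(S)$, since $\varphi_\sharp$ commutes with $f_\sharp$ and preserves units. Then we use the lax monoidal multiplication of $\varphi^*$ (right adjoint of a symmetric monoidal functor), which is a $\varphi^*\unit_S$-module map. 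Equivalently, in $\sT(S;\varphi^*\unit)$ this is the exchange morphism $f_\sharp\varphi^*\unit_X\to\varphi^*f_\sharp\unit_X$. Here we use that $f^*$ commutes with $\varphi^*$ and with $-\otimes\varphi^*\unit$, and that the module-level $f_\sharp$ is computed as $f_\sharp$ followed by base change to $\varphi^*\unit_S$; the latter holds by the $\lSm^\sat$ projection formula. The goal is to show this exchange map $\Ex\colon f_\sharp\varphi^*\to\varphi^* f_\sharp$ (on $\unit_X$) is an isomorphism.

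Second, we apply Theorem \ref{local.2} in both theories. We check that $\sT(-;\varphi^*\unit)$ satisfies the axioms of \S\ref{sat}: it has $f_\sharp$, base change, and projection formula, and $\square$-, div-invariance and $\P^1$-stability follow from those of $\sT$ because $f_\sharp$ and the Thom spaces are obtained by base change along $-\otimes\varphi^*\unit$. Theorem \ref{local.2} then gives $f_\sharp\simeq f_*\Sigma^{\Omega_f^1}$ both in $\sT(-;\varphi^*\unit)$ and in $\sT'$. Alternatively, we can avoid verifying the axioms: Proposition \ref{alpha.3} and Proposition \ref{Gysin.22} show that $M_S(X)$ is dualizable in $\sT(S)$ with dual $\Th_X^{-1}(\Omega_f^1)\simeq f_*\Sigma^{-\Omega_f^1}\unit$. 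Base change along the symmetric monoidal functor $-\otimes\varphi^*\unit_S$ preserves duals, so $M_S(X)\otimes\varphi^*\unit_S$ is dualizable with dual $f_\sharp\Sigma^{-\Omega_f^1}\varphi^*\unit_X$.

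Third, we compare both sides through $f_*$. In $\sT'$, Theorem \ref{local.2} gives $M_S(X)\simeq f_*\Sigma^{\Omega_f^1}\unit_X$. Applying $\varphi^*$ and using $f_*\varphi^*\simeq\varphi^*f_*$ and $\varphi^*\Sigma^{\Omega_f^1}\simeq\Sigma^{\Omega_f^1}\varphi^*$ (Thom spaces are invertible and $\varphi_\sharp$ is monoidal, so the projection formula for $\varphi$ holds on invertible objects), we get
\[
\varphi^* M_S(X)\simeq f_*\Sigma^{\Omega_f^1}\varphi^*\unit_X.
\]
On the module side, $\mathfrak p_f$ in $\sT(-;\varphi^*\unit)$ gives $f_\sharp\varphi^*\unit_X\simeq f_*\Sigma^{\Omega_f^1}\varphi^*\unit_X$. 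Composing the two gives the desired isomorphism.

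The main obstacle is naturality and compatibility: we must check that this composite agrees with (or at least is an isomorphism of modules realizing) the exchange map. This needs $\mathfrak p_f$ to be compatible with $\varphi^*$. For that, we would prove that the Gysin morphisms commute with $\varphi_\sharp$, since both are built from $f_\sharp$ and pullbacks as in Theorem \ref{setup.1}, and then pass to right adjoints, in the same style as the diagram chases of Lemma \ref{Thom.1} and Proposition \ref{Thom.2}. If only an abstract isomorphism is needed, the second and third steps already suffice.
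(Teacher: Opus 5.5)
Your proposal is correct and follows the paper's proof: Theorem \ref{local.2} in both theories, together with $f_*\varphi^*\simeq\varphi^*f_*$ and the commutation of $\varphi^*$ with the invertible twist $\Sigma^{\Omega_f^1}$, gives $f_\sharp\varphi^*\simeq\varphi^*f_\sharp$, and the $\lSm^{\sat}$ projection formula identifies $f_\sharp f^*\varphi^*\unit_S$ with $M_S(X)\otimes_S\varphi^*\unit_S$ (the paper also leaves implicit the compatibility with the exchange map that you flag). Your dualizability aside already gives a shorter proof that settles that compatibility and makes the axiom check for $\sT(-;\varphi^*\unit)$ unnecessary: $M_S(X)$ is dualizable in $\sT(S)$, $\varphi_\sharp$ is symmetric monoidal, and $\varphi_\sharp M_S(X)\simeq M_S(X)$, so the canonical map $M_S(X)\otimes_S\varphi^*\unit_S\to\varphi^*\varphi_\sharp M_S(X)$ of your first step is an isomorphism by the projection formula for dualizable objects (incidentally, $\Th_X^{-1}(\Omega_f^1)\simeq f_*\unit_X$, not $f_*\Sigma^{-\Omega_f^1}\unit_X$).
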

\begin{proof}
The morphism is obtained by adjunction from
\[
\varphi_\sharp(M_S(X)\otimes_S \varphi^*1_S)
\simeq
\varphi_\sharp M_S(X).
\]
Let $f\colon X\to S$ be the structural morphism.
Then by Theorem \ref{local.2},
$f_\sharp$ commutes with $\varphi^*$ since $f_*$ commutes with $\varphi^*$.
Hence
\[
M_S(X)\otimes_S \varphi^*1_S
\simeq
f_\sharp f^* \varphi^* 1_S
\simeq
\varphi^* f_\sharp f^*1_S
\simeq
\varphi^* M_S(X),
\]
where the first morphism is due to the $\lSm^\sat$ projection formula.
\end{proof}

\begin{prop}
\label{coh.1}
Let $S\in \Sch$.
Then for every projective $X\in \lSm_S$,
we have a natural isomorphism of $\varphi^* \unit_S$-modules
\[
M_S(X)
\otimes_S 
\varphi^* \unit_S
\simeq
\varphi^* M_S(X-\partial X).
\]
\end{prop}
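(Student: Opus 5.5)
The plan is to reduce to the case where the underlying scheme of $X$ is smooth and the boundary is a strict normal crossing divisor. In that case we compare both sides by an induction on the number of boundary components, using the log Gysin sequence together with Proposition \ref{coh.7}. Throughout, $\varphi^*M_S(X-\partial X)$ means $\varphi^*$ applied to the motive of $X-\partial X$ in $\sT'(S)$, and the comparison map is the adjoint of
\[
\varphi_\sharp(M_S(X)\otimes_S\varphi^*\unit_S)\simeq \varphi_\sharp M_S(X)\simeq M_S(X)\to M_S(X-\partial X),
\]
the last map taken in $\sT'(S)$. I will assume, as holds in the intended situation $\varphi_\sharp=\omega_\sharp$ into an $\A^1$-invariant theory, that this last map is an isomorphism in $\sT'(S)$.

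\emph{Step 1 (reduction to $\SmlSm_S$).} Since $S\in\Sch$ has trivial log structure, $X$ is Zariski locally strict étale over toric models $\A_P\times S$. A projective regular subdivision of the fan of $X$, as in Kato's toroidal resolution, gives a dividing cover $X'\to X$ with $X'\in\SmlSm_S$ projective. One also has $X'-\partial X'=X-\partial X$. By div-invariance $M_S(X')\simeq M_S(X)$ in $\sT(S)$, so we may assume $\ul{X}\in\Sm_S$ and $\partial X=D_1\cup\dots\cup D_r$ is a strict normal crossing divisor. Passing further to the blow-ups making the $D_i$ smooth is again admissible, by \cite[Theorem 7.2.10]{logDM}.

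\emph{Step 2 (induction on $r$).} If $r=0$, then $X$ is strict smooth projective and the claim is exactly Proposition \ref{coh.7}. For $r>0$, set $X_1:=(\ul{X},D_2+\dots+D_r)$ and regard the strict closed immersion $D_1\cap X_1\to X_1$. Then $X=(X_1,D_1)$ up to the identification of the adding boundary notation. The log Gysin sequence \eqref{Gysin.0.3} gives a cofiber sequence
\[
M_S(X)\to M_S(X_1)\to \Th_{D_1}(\rN_{D_1}X_1).
\]
The Thom space is $M_S(D_1)$ twisted by a Thom space of a vector bundle, where $D_1$ carries the boundary induced from $D_2,\dots,D_r$. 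Tensoring with $\varphi^*\unit_S$ is exact. Likewise $\varphi^*$ is exact and commutes with Thom twists, since Thom spaces are invertible and $\varphi_\sharp$ is monoidal. So by induction the two right terms are identified with $\varphi^*M_S(X_1-\partial X_1)$ and $\varphi^*\Th(\rN)$ of $D_1-\partial D_1$. In $\sT'(S)$, the cofiber of the corresponding map is $M_S(\ul{X}-\partial X)$ by the ordinary Gysin sequence in $\sT'$ for $D_1\cap(\ul{X}-D_{\ge 2})\subset \ul{X}-D_{\ge2}$. The five lemma then finishes the induction.

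\emph{Main obstacle.} The hardest part is the naturality check that the comparison maps of Step 2 form a morphism of cofiber sequences. Concretely, $\varphi_\sharp$ must send the log Gysin morphism to the classical Gysin morphism in $\sT'$. I would establish this by applying $\varphi_\sharp$ to the deformation diagram \eqref{Gysin.0.2}: since $\varphi_\sharp$ commutes with $f_\sharp$, it transports the diagram for $Z\to X$ to the corresponding deformation diagram in $\sT'$. In an $\A^1$-invariant $\sT'$, where $(Y,E)$ and $Y-E$ have the same motive, that diagram is the usual deformation to the normal cone. A secondary point is the existence of the projective regular subdivision in Step 1, which is standard toric geometry and needs no resolution of singularities.
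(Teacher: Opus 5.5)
Your proposal is correct and follows the paper's proof essentially step for step. You pass to a projective dividing cover in $\SmlSm_S$ and induct on the number of boundary components, with Proposition \ref{coh.7} as the base case; the inductive step compares the log Gysin sequence $M_S(X)\to M_S(X_1)\to \Th_{D_1}(\rN_{D_1}X_1)$ with the Gysin sequence in $\sT'$ for $X_1-\partial X_1$, exactly as the paper does.

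Your explicit hypothesis that $M_S(X-\partial X)\simeq M_S(X)$ in $\sT'(S)$, and your remark that $\varphi_\sharp$ carries log Gysin maps to classical ones, spell out what the paper uses tacitly. One point needs tightening, and it is equally implicit in the paper. The Thom twist on the right-hand term lives over $D_1$, not over $S$, so "$\varphi^*$ commutes with Thom twists" does not let you factor it out and apply the induction hypothesis as stated. The fix is to formulate the inductive statement for twisted motives $g_\sharp\Sigma^{\cE}\unit_X$, with $g\colon X\to S$ the structure map and $\cE$ a vector bundle on $X$. The twisted Gysin sequence \eqref{Gysin.9.2} and the argument of Proposition \ref{coh.7} handle this generalization verbatim.
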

\begin{proof}
The question is dividing Nisnevich local on $X$,
so we may assume $X\in \SmlSm_S$.
We proceed by induction on the number $d$ of components of $\partial X$.

The case of $d=0$ is a special case of Proposition \ref{coh.7}.
Assume $d>0$ and the claim holds for $d-1$.
Consider $Y\in \SmlSm_S$ and a divisor $D$ on $Y$ such that $(Y,D)\simeq X$.
We regard $D$ as an object of $\SmlSm_S$.
We have a morphism of cofiber sequences
\[
\begin{tikzcd}
M_S(X)\otimes_S \varphi^* 1_S\ar[r]\ar[d]&
M_S(Y)\otimes_S \varphi^* 1_S\ar[r]\ar[d]&
\Th_D(\rN_D Y)\otimes_S \varphi^* 1_S\ar[d]
\\
\varphi^* M_S(X-\partial X)\ar[r]&
\varphi^* M_S(Y-\partial Y)\ar[r]&
\varphi^* \Th_{D-\partial D}(\rN_{D-\partial D}(Y-\partial Y)).
\end{tikzcd}
\]
By induction,
the left and right vertical morphisms are isomorphisms,
so the middle vertical morphism is an isomorphism too.
\end{proof}

\begin{exm}
\label{coh.2}

For $S\in \lSch$,
consider the $\A^1$-localization functor
\[
\omega_\sharp \colon \logSH(S)
\to
\logSH(S)[(\A^1)^{-1}].
\]
Recall from \cite[Proposition 2.5.7]{logA1} that the right-hand side is equivalent to $\SH(S)$ if $S\in \Sch$.

Let us collect some examples of $\omega^*1_S$-algebras.
Let $S\in \Sch_S$.
For every $A\in \CAlg(\SH(S))$,
$\omega^*A$ is an $\omega^*1_S$-algebra.
In particular,
$\omega^*\KGL$ is an $\omega^*1_S$-algebra.

See \cite[Definition 8.5.3]{logSH} for $\blogTC$, $\blogTC^-$, and $\blogTHH$.
Together with the log cyclotomic trace $\omega^*\KGL\simeq \logKGL \to \blogTC$ in \cite[Theorem F]{logSHF1},
we see that $\blogTC$ is an $\omega^*1_X$-algebra.
It follows that $\blogTC^-$ and $\blogTHH$ are $\omega^*1_X$-algebras too.

A general recipe in \cite[Construction 6.5]{AHI} shows that
\begin{gather*}
\MZ^\syn
:=
(\gr_0^\BMS \logTC_p^\wedge,\gr_1^\BMS \logTC_p^\wedge,\cdots)\in \logSH(S)
\\
\text{(resp.\ }\bPrism
:=
(\gr_0^\BMS (\logTC^-)_p^\wedge,\gr_1^\BMS (\logTC^-)_p^\wedge,\ldots)\in \logSH(S))
\end{gather*}
in \cite[Definition 5.12]{BPO3} is a $\blogTC$-algebra (resp.\ $\blogTC^-$-algebra).
In particular, $\MZ^\syn$ and $\bPrism$ are $\omega^*1_S$-algebras.
See \cite{BLPO2} for the BMS filtrations on $\logTC_p^\wedge$ and $(\logTC^-)_p^\wedge$,
which generalizes the BMS filtration \cite{BMS19} to the log setting.
\end{exm}

\begin{rmk}
For a perfect field $k$ admitting resolution of singularities,
$\omega^1_k\simeq 1_k$ by \cite[Theorem 1.1(2)]{loghomotopy}.
In particular, every object of $\logSH(k)$ is an $\omega^*1_k$-module.
\end{rmk}

\begin{const}
\label{coh.3}
Let $S\in \Sch$.
For $\bE\in \logSH(S;\omega^*1)$ and $Y\in \Sch_S$,
consider
\[
\bE^\lc(Y/S)
:=
\hom_{\logSH(S;\omega^*\unit)}(\omega^* \Sigma^\infty g_!g^!\unit_Y,\bE).
\]
Here, $g\colon Y\to S$ is the structural morphism,
and $\lc$ stands for log compactification.
By \cite[Proposition 3.3.10]{CD19},
$\bE^\lc$ is a cdh sheaf.

For every projective $X\in \lSm_S$,
Proposition \ref{coh.1} yields
\[
\bE^\lc(X-\partial X/S)
\simeq
\hom_{\logSH(S)}(\Sigma^\infty X_+,\bE).
\]
\end{const}

\begin{thm}
\label{coh.8}
Let $k$ be a perfect field of characteristic $p>0$.
Then there exists a cdh sheaf of complexes
\(
R\Gamma_\crys^\lc
\)
on the category of schemes over $k$ such that
\[
R\Gamma_\crys^\lc(X-\partial X)
\simeq
R\Gamma_\crys(X)
\]
for every projective log smooth fs log scheme $X$ over $k$.
\end{thm}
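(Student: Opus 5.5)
The plan is to apply Construction \ref{coh.3} with $S=\Spec(k)$ to a suitable $\omega^*\unit_k$-module $\bE$ representing crystalline cohomology in $\logSH(k)$, and then set $R\Gamma_\crys^\lc(Y):=\bE^\lc(Y/k)$. By Construction \ref{coh.3}, this is automatically a cdh sheaf. For every projective log smooth $X$ over $k$, Proposition \ref{coh.1} then gives
\[
R\Gamma_\crys^\lc(X-\partial X)\simeq \hom_{\logSH(k)}(\Sigma^\infty X_+,\bE).
\]
It therefore remains to choose $\bE$ so that the right-hand side is $R\Gamma_\crys(X)$ for log smooth fs log schemes $X$ over $k$. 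Here $R\Gamma_\crys(X)$ means log crystalline cohomology relative to $W(k)$.

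\textbf{Choice of $\bE$.} Over a perfect field of characteristic $p$, Example \ref{coh.2} supplies the prismatic spectrum $\bPrism$ as an $\omega^*\unit_k$-algebra, since it is a $\blogTC^-$-algebra and $\blogTC^-$ receives the log cyclotomic trace from $\omega^*\KGL$.

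I would take $\bE$ to be the weight-zero (untwisted) part of the graded object. Concretely, $\bE$ is the $\P^1$-spectrum whose $n$th level is $\gr^n_\BMS(\logTC^-)^\wedge_p[2n]$ with its Tate-twisted bonding maps. Then $\hom(\Sigma^\infty X_+,\bE)$ recovers $\gr^0_\BMS(\logTC^-)^\wedge_p(X)$. By the log version of Bhatt--Morrow--Scholze \cite{BLPO2}, this is the Nygaard-completed log prismatic cohomology of $X$ over $\Z_p$.

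An alternative is to use $\MZ^\syn$ instead. Either way, $\bE$ is a module over an $\omega^*\unit_k$-algebra, hence itself an $\omega^*\unit_k$-module.

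For $X$ log smooth over a perfect field $k$, prismatic cohomology relative to the crystalline prism $(W(k),(p))$ agrees with log crystalline cohomology via the log crystalline comparison. For smooth $X$ in characteristic $p$ the Nygaard completion is harmless by the BMS comparison, and the same should hold in the log setting. This gives $\hom(\Sigma^\infty X_+,\bE)\simeq R\Gamma_\crys(X)$. Alternatively, I would follow \cite[Construction 6.5, Proposition 6.27]{AHI} more literally: form the $\P^1$-spectrum from $R\Gamma_\crys$ itself, with the first Chern class of $\cO(1)$ as bonding map, and show it is a $\bPrism$-module. For the latter I would use the Frobenius-twisted identification of $\bPrism$ with crystalline cohomology in characteristic $p$.

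\textbf{Main obstacle.} I expect the hardest step to be verifying that the chosen $\bE$ genuinely lies in $\logSH(k)$ with the right values and is an $\omega^*\unit_k$-module. This requires three things:
\begin{itemize}
\item $\square$-invariance of log crystalline cohomology;
\item dividing invariance and strict Nisnevich descent, together with the $\P^1$-bundle formula giving the $\P^1$-stability;
\item the identification of the log prismatic (or syntomic) graded pieces with log crystalline cohomology for $p$-adically completed $k$-schemes.
\end{itemize}
I would try to cite \cite{BLPO2} and \cite{BPO3} for the first two and for the comparison. The module structure is then inherited from the log cyclotomic trace as in Example \ref{coh.2}, which is the genuinely nontrivial input.
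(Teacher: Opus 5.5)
Your proposal is correct and is essentially the paper's proof. The paper also takes $\bE=\bPrism=(\gr_0^\BMS(\logTC^-)_p^\wedge,\gr_1^\BMS(\logTC^-)_p^\wedge,\ldots)$, with no extra $[2n]$ shift on the levels, notes that it is an $\omega^*\unit_k$-module by Example \ref{coh.2}, and concludes via Construction \ref{coh.3} (cdh descent together with Proposition \ref{coh.1}). The step you hedge on, $\hom_{\logSH(k)}(\Sigma^\infty X_+,\bPrism)\simeq R\Gamma_\crys(X)$ for log smooth $X$ including the log Nygaard-completeness, is exactly what the paper cites as \cite[Theorem 4.30]{BLMP}, and the fact that $\bPrism\in\logSH(k)$, which you list as an obstacle, is already part of the setup of Example \ref{coh.2}.
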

\begin{proof}
By \cite[Theorem 4.30]{BLMP},
$\bPrism$ represents the crystalline cohomology.
Since $\bPrism$ is an $\omega^* 1_k$-module by Example \ref{coh.2},
we conclude by Construction \ref{coh.3}.
\end{proof}

This recovers a result of Annala-Hoyois-Iwasa \cite[Proposition 6.27]{AHI}.
Before that,
assuming resolution of singularities and weak factorization (resp.\ resolution of singularities),
Ertl-Shiho-Sprang \cite{ESS} (resp.\ Merici \cite{Mer25}) constructed such a cohomology theory,
and Merici represented it in $\DM(k)$.
Also, \cite[Theorem 1.1(2)]{loghomotopy} yields such a cohomology under resolution of singularities by considering
\[
\hom_{\logSH(S)}(\omega^* \Sigma^\infty X_+,\bE).
\]

\section{Failure of the localization property}
\label{local2}

We keep working with $\sT$ in \S \ref{sat}.
It is not the case that the localization property for $\logSH$ always holds.
If it always holds,
then the functor $i^*$ is an equivalence for every nil-immersion $i$ since the open complement of $i$ is empty.
However,
there are lots of non $\A^1$-invariant cohomology theories representable in $\logSH$, e.g.\ Hodge cohomology,
which are not nil-invariant.
In this section,
we show that the localization property also fails for closed non-open immersions of smooth schemes.

Throughout this section,
let $S\in \Sch$,
let $i\colon Z\to X$ be a closed immersion in $\Sm_S$,
let $j$ be the open complement of $i$,
let $u\colon (X,Z)\to X$ be the canonical morphism,
and let $f\colon X\to S$ be the structural morphism.

\begin{prop}
\label{local2.1}
There exists a natural fiber sequence
\[
(fu)_\sharp u^*\xrightarrow{\ad'} f_\sharp \xrightarrow{\ad} f_\sharp i_*i^*.
\]
\end{prop}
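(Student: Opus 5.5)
The plan is to identify the cofiber of $(fu)_\sharp u^*\xrightarrow{\ad'} f_\sharp$ via the log Gysin sequence of Theorem \ref{setup.1}, and then compare it with $f_\sharp i_*i^*$ using Theorem \ref{local.1}. Theorem \ref{setup.1} gives a cofiber sequence
\[
(fu)_\sharp u^*\xrightarrow{\ad'} f_\sharp\xrightarrow{\Gys} (fi)_\sharp\Sigma^{\rN_i}i^*.
\]
So it suffices to produce a natural isomorphism $\gamma\colon f_\sharp i_*i^*\xrightarrow{\simeq}(fi)_\sharp\Sigma^{\rN_i}i^*$ under which $\Gys$ corresponds to $f_\sharp\xrightarrow{\ad}f_\sharp i_*i^*$. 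Here $Z,X\in \Sm_S$ with trivial log structures, so $i$ is a strict closed immersion in $\lSm_S^\sat$ and all earlier results apply.

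First I take $\gamma$ to be the composite of Theorem \ref{local.1}, precomposed with $i^*$:
\[
f_\sharp i_*i^*\xrightarrow{\Gys}(fi)_\sharp\Sigma^{\rN_i}i^*i_*i^*\xrightarrow{\ad'}(fi)_\sharp\Sigma^{\rN_i}i^*.
\]
By Theorem \ref{local.1} this is an isomorphism.

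Second, I verify the compatibility $\gamma\circ(f_\sharp\,\ad)\simeq\Gys$ as natural transformations $f_\sharp\to(fi)_\sharp\Sigma^{\rN_i}i^*$. The Gysin transformation $f_\sharp\xrightarrow{\Gys}(fi)_\sharp\Sigma^{\rN_i}i^*$ is natural in its argument, so applying it to the unit $\id\to i_*i^*$ gives a commutative square. Its top row is $f_\sharp\xrightarrow{\Gys}(fi)_\sharp\Sigma^{\rN_i}i^*$ and its bottom row is $f_\sharp i_*i^*\xrightarrow{\Gys}(fi)_\sharp\Sigma^{\rN_i}i^*i_*i^*$. The vertical maps are $f_\sharp\ad$ and $(fi)_\sharp\Sigma^{\rN_i}i^*\ad$.

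Composing the bottom row with $\ad'$ gives $\gamma$. The triangle identity says $i^*\xrightarrow{i^*\ad}i^*i_*i^*\xrightarrow{\ad' i^*}i^*$ is the identity. Hence $\gamma\circ f_\sharp\ad=\Gys$.

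Therefore the cofiber of $\ad'\colon(fu)_\sharp u^*\to f_\sharp$ is identified, via $\gamma^{-1}$, with $f_\sharp\xrightarrow{\ad}f_\sharp i_*i^*$. This yields the fiber sequence in the statement, since cofiber and fiber sequences agree in the stable setting.

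The step I expect to be the main obstacle is the coherence in the second step. The transformation $\Gys$ is only constructed objectwise on generators $M_X(V)$ and then extended as in Theorem \ref{setup.1}, so its naturality with respect to the non-colimit-preserving composite $i_*i^*$ must be checked carefully. Since $\Gys$ is a natural transformation of functors $\sT(X)\to\sT(S)$, whiskering with the unit is formal. The only care needed is that $\ad'$ for $(i^*,i_*)$ is a genuine counit, which it is. The resulting fiber sequence is natural, but it depends on the chosen null-homotopy of the composite. That null-homotopy is transported along $\gamma$ from the one in Theorem \ref{setup.1}.
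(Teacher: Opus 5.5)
Your proposal is correct and follows essentially the same route as the paper. The paper likewise uses the log Gysin sequence of Theorem \ref{setup.1} and the naturality square of $\Gys$ against the unit $\id\to i_*i^*$. It closes that square with the triangle identity, so that $\Gys$ factors through $f_\sharp i_*i^*$, and then uses Theorem \ref{local.1} (precomposed with $i^*$) to show that the comparison $f_\sharp i_*i^*\to (fi)_\sharp\Sigma^{\rN_i}i^*$ is an isomorphism.
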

\begin{proof}
We have a commutative diagram
\[
\begin{tikzcd}
p_\sharp \ar[d,"\Gys"']\ar[r,"\ad"]&
p_\sharp i_*i^*\ar[d,"\Gys"]
\\
(pi)_\sharp \Sigma^{\rN_i}i^*\ar[r,"\ad"]\ar[rr,bend right=15,"\id"]&
(pi)_\sharp \Sigma^{\rN_i}i^*i_*i^*\ar[r,"\ad'"]&
(pi)_\sharp \Sigma^{\rN_i}i^*
\end{tikzcd}
\]
and the log Gysin sequence
\[
(pu)_\sharp u^* \xrightarrow{\ad}
p_\sharp
\xrightarrow{\Gys}
(pi)_\sharp \Sigma^{\rN_i} i^*.
\]
Hence it suffices to show that the composite natural transformation
\[
p_\sharp i_*i^*
\xrightarrow{\Gys}
(pi)_\sharp \Sigma^{\rN_i}i^*i_*i^*
\xrightarrow{\ad'}
(pi)_\sharp \Sigma^{\rN_i}i^*
\]
is an isomorphism.
This follows from Theorem \ref{local.1}.
\end{proof}

\begin{prop}
\label{local2.2}
Assume that $i$ is not an open immersion.
Then the natural null sequence
\[
j_\sharp j^* \xrightarrow{\ad'} \id \xrightarrow{\ad} i_*i^*
\colon
\logSH(X)\to \logSH(X)
\]
is not a fiber sequence.
\end{prop}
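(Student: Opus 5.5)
We argue by contradiction. Suppose $j_\sharp j^*\to \id\to i_*i^*$ is a fiber sequence on $\logSH(X)$. Applying $f_\sharp$ gives a fiber sequence
\[
(fj)_\sharp j^* \xrightarrow{\ad'} f_\sharp \xrightarrow{\ad} f_\sharp i_*i^*.
\]
Comparing with Proposition \ref{local2.1}, whose second map is the same $\ad$, we obtain a natural isomorphism $(fu)_\sharp u^*\simeq (fj)_\sharp j^*$ of fibers over $f_\sharp$. Both maps to $f_\sharp$ are induced by the morphisms $X-Z\to (X,Z)\to X$, so the isomorphism should be the one induced by the open immersion $X-Z\to (X,Z)$. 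Evaluating at $\unit_X$ shows that
\[
M_S(X-Z)\to M_S(X,Z)
\]
is an isomorphism in $\logSH(S)$. Naturality in $X$, via the evaluations at $M_X(V)$ for $V\in \Sm_X$ as in Theorem \ref{setup.1}, gives the same for every $V\to X$ smooth with $W=V\times_X Z$.

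To get a contradiction we need a cohomology theory representable in $\logSH(S)$ that distinguishes $(V,W)$ from $V-W$. Since $i$ is not an open immersion, after shrinking $X$ Zariski locally and base changing to a point of $Z$ where $Z\to X$ is not open, we may assume $Z$ has positive codimension $c\geq 1$ somewhere. Hodge cohomology, i.e.\ the log Hodge spectrum $\bOmega$ representing $\bigoplus_q H^{*}(-,\Omega^q_{\log})$, is representable in $\logSH$. We then choose a test case: over a field $k$ (pulled back from $S$), take $V=\A^c$ and $W=0$. In that case $(V,W)=(\A^c,0)$ has coherent Hodge cohomology, finite-dimensional in each degree after projectivizing. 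Equivalently, $M(\A^c,0)$ sits in the log Gysin sequence with $M(\A^c)$ and $\Th(\cO^c)$, so its $H^0(\cO)$ equals $k[x_1,\dots,x_c]$ when $c\geq 2$, or is compared directly when $c=1$. Meanwhile $\A^c-0$ has Hodge cohomology in degree $c-1$ that is infinite-dimensional, for example $H^{c-1}(\A^c-0,\cO)$. For $c=1$ we compare instead $H^0(\A^1-0,\cO)=k[t,t^{-1}]$ with $H^0((\A^1,0),\cO)=k[t]$. This mismatch contradicts the isomorphism.

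The main obstacle is the reduction step. We must ensure that the isomorphism obtained over $X$ genuinely pulls back to the local model $(\A^c,0)$ via strict étale and Nisnevich maps, which follows by evaluating at $M_X(V)$ for $V$ étale over $X$ and using excision as in Proposition \ref{Gysin.7}. We must also check that the map of fibers is the canonical one. An alternative route to the contradiction, avoiding explicit cohomology, is to use Theorem \ref{local.1} to identify $f_\sharp i_*i^*\simeq (fi)_\sharp\Sigma^{\rN_i}i^*$. The assumption would then force $M(X-Z)\to M(X)$ to have cofiber $\Th_Z(\rN_ZX)$, making $X-Z\to (X,Z)$ an isomorphism of motives. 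That would contradict the non-$\A^1$-invariance exhibited by $\bOmega$ on $\mathbb{G}_m$ versus $(\A^1,0)$.
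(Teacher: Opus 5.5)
Your opening step is the paper's: by Proposition~\ref{local2.1}, the hypothesis forces the canonical map $M_S(V-W)\to M_S(V,W)$ to be an isomorphism for every $V$ smooth over $X$ (with $W=V\times_X Z$). One then tests against $\bOmega^0$, i.e.\ coherent cohomology of $\cO$. The routes diverge in how the contradiction is produced.

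The paper notes that this map is unchanged if $Z\subset X$ is replaced by $E\subset \Bl_Z X$, since $(X,Z)=(\Bl_ZX,E)$ and $X-Z=\Bl_ZX-E$. It then localizes until $E$ is cut out by one element $a$ of a ring $A$, and needs only $H^0$: the restriction $A\to A_a$ is not surjective.

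You keep $Z$ of codimension $c$ in $X$. For $c\geq 2$, depth (Hartogs) makes $H^0$ agree on both sides, so you must detect the failure in degree $c-1$. This is heavier. The compensating feature is that you apply the hypothesis only to schemes smooth over $X$: Zariski opens, and an étale neighbourhood $X'$ of $Z$ with an étale map $X'\to Z\times\A^c$ pulling $Z\times 0$ back to $Z$. You then transport by strict Nisnevich excision in $\logSH(S)$. Localizing on $\Bl_ZX$, as the paper does, uses opens of $\Bl_ZX$ as test objects, and these are not (log) smooth over $X$. That step therefore needs an argument the hypothesis on $\logSH(X)$ does not obviously supply, and your route avoids it.

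As written, however, your reduction and computation need repair.
\begin{itemize}
\item The excision above yields $M_S(Z\times(\A^c-0))\simeq M_S(Z\times(\A^c,0))$ in $\logSH(S)$. It does not give an isomorphism for $(\A^c,0)$ over a field: the factor $Z$ does not go away. The base change to a point is also unnecessary.
\item So compute with $Z=\Spec B$ affine. For $c\geq 2$, $H^{c-1}(\A^c_B-0,\cO)\neq 0$, whereas $H^{c-1}(\Bl_0\A^c_B,\cO)=0$. For $c=1$, the map $B[t]\to B[t^{\pm 1}]$ is not surjective.
\item Your description of the log side should be replaced by exactly this vanishing. $H^0((\A^c,0),\cO)=k[x_1,\dots,x_c]$ is not finite-dimensional, and the $H^0$ remark via the Gysin sequence plays no role when $c\geq 2$.
\item For $c=1$ you genuinely need the isomorphism to be the canonical map, as you note, because $B[t]$ and $B[t^{\pm1}]$ are abstractly isomorphic modules.
\item Excision can in fact be skipped. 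Zariski-locally, $X=\Spec A$ with $Z$ cut out by a regular sequence of length $c\geq 2$. Then $H^{c-1}((X,Z),\cO)=H^{c-1}(\Bl_ZX,\cO)=H^{c-1}(X,\cO)=0$, while $H^{c-1}(X-Z,\cO)\simeq H^c_Z(X,\cO)\neq 0$.
\item Your ``alternative route'' is not independent of the main one. Proposition~\ref{local2.1} is itself deduced from Theorem~\ref{local.1}, and a reduction to a local model is still required.
\end{itemize}
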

\begin{proof}
By Proposition \ref{local2.1},
it suffices to show that the natural transformation
\begin{equation}
\label{local2.2.1}
f_\sharp j_\sharp j^* \xrightarrow{\ad'} (fu)_\sharp u^*
\end{equation}
is not an isomorphism.
We can work Zariski locally on $X$.
We can also replace $Z\to X$ by $E\to \Bl_Z X$ with the exceptional divisor $E$.
Hence we may assume that $S=\Spec(A)$ and $Z=\Spec(A/a)$ for some ring $A$ and an element $a$ of $A$ such that $A\neq A_a$.

In this case,
assume that \eqref{local2.2.1} is an isomorphism.
Consider $\bOmega^0\in \logSH(S)$ in \cite[(7.7.4)]{logSH}.
Then we have
\[
\Hom_{\logSH(S)}((fu)_\sharp u^* \unit_S,\bOmega^0)
\simeq
\Hom_{\logSH(S)}(f_\sharp j_\sharp j^* \unit_S,\bOmega^0),
\]
which implies
\[
H^0((X,Z),\cO)
\simeq
H^0(X-Z,\cO).
\]
This is false since the left-hand (resp.\ right-hand) side is isomorphic to $A$ (resp.\ $A_f$).
\end{proof}

\bibliography{bib}
\bibliographystyle{siam}

\end{document}